\def\R{\mathbb{R}}
\def\C{\mathbb{C}}
\def\N{\mathbb{N}}
\def\Z{\mathbb{Z}}
\def\t{\mathrm{t}}
\def\c{\mathrm{c}}
\def\P{\mathbb{P}}
\def\RR{\mathcal{R}}
\theoremstyle{plain}
\newtheorem{thm}{\noindent Theorem}
\newtheorem{cor}[thm]{\noindent Corollary}  
\newtheorem{lem}[thm]{\noindent Lemma}
\newtheorem{prop}[thm]{\noindent Proposition}
\theoremstyle{definition}
\newtheorem{defn}{\noindent Definition}
\newtheorem{example}{\noindent Example}
\title{Computing the Newton Polygon \\ of the Implicit Equation}
\author{Ioannis Z.~Emiris\thanks{Dept.\ of Informatics and Telecommunications,
        National Kapodistrian University of Athens, Greece.}
        \and Christos Konaxis\footnotemark[\value{footnote}]
        \and Leonidas Palios\thanks{Dept.\ of Computer Science, University of Ioannina, Greece}
}
\date{17 November 2007}
\begin{document}
\maketitle

\begin{abstract}
We consider polynomially and rationally parameterized curves, where
the polynomials in the parameterization have fixed supports and generic coefficients.
We apply sparse (or toric) elimination theory in order to determine the
vertex representation of its implicit polygon, i.e.\
of the implicit equation's Newton polygon.
In particular, we consider mixed subdivisions of the input Newton polygons and
regular triangulations of point sets defined by Cayley's trick.
We distinguish polynomial and rational parameterizations, where the latter may have
the same or different denominators; the implicit polygon is shown to have,
respectively, up to~4, 5, or~6 vertices.
In certain cases, we also determine some of the coefficients in the implicit equation.
\end{abstract}

{\footnotesize
\noindent
Keywords: Sparse (toric) resultant, implicitization,
Newton polygon, Cayley trick, mixed subdivision.
}

\section{Introduction}

Implicitization is the problem of switching from a parametric representation of a
hypersurface to an algebraic one.
It is a fundamental question with several applications, e.g.~\cite{Hof89,HoSeWi97}.
Here we consider the implicitization problem for a planar curve, where the
polynomials in its parameterization have fixed Newton polytopes.
We determine the vertices of the Newton polygon of
the implicit equation, or \emph{implicit polygon}, without computing the equation,
under the assumption of \emph{generic} coefficients relative to the given supports,
i.e.\ our results hold for all coefficient vectors in some open dense subset of
the coefficient space.

This problem was posed in \cite{SY94}. It appeared in~\cite{EmiKot03,EmiKot05},
then in~\cite{STY,SY07}, and more
recently in~\cite{Esterov-Khovanskii} and~\cite{DAndrea-Sombra}.
The motivation is that
\lq\lq apriori knowledge of the Newton polytope
would greatly facilitate the subsequent computation of recovering the coefficients
of the implicit equation [\ldots] This is a problem of numerical linear
algebra \dots\rq\rq [{}\cite{STY}].
Reducing implicitization to linear algebra is also the premise of \cite{CoGiKoWa,EmiKot03}.
Yet, this can be nontrivial if coefficients are not generic.
Another potential application of knowing the implicit polygon is to approximate
implicitization, see \cite{Dokken}.

Previous work includes \cite{EmiKot03,EmiKot05}, where
an algorithm constructs the Newton polytope of any implicit equation.
That method had to compute all mixed subdivisions,
then applies cor.~\ref{C:surjection-mxd-extreme}.
In \cite[chapter~12]{GKZ}, the authors study the resultant of two univariate polynomials
and describe the facets of its Newton polytope.
In~\cite{GKZ90}, the extreme monomials of the Sylvester resultant are described.
%
The approaches in \cite{EmiKot03,GKZ},
cannot exploit the fact that the denominators in a rational parameterization may be identical.

\cite{STY} offered algorithms to compute the Newton polytope
of the implicit equation of any hypersurface parameterized by Laurent polynomials.
Their approach is based on tropical geometry.
It extends to arbitrary implicit ideals.
They give a generically optimal implicit support;
for curves, the support is described in~\cite[example~1.1]{STY}.
Their approach also handles rational parameterizations with the same denominator by
homogenizing the parameter as well as the implicit space.
The implicit equation is homogeneous, hence its Newton polytope lies in a hyperplane,
which may cause numerical instability in the computation.

In~\cite{Esterov-Khovanskii} the problem is solved in an abstract way by means of
composite bodies and mixed fiber polytopes.
In~\cite{DAndrea-Sombra} the normal fan of the implicit polygon is determined,
with no genericity assumption on the coefficients.  This is computed by the
multiplicities of any parameterization of the rational plane curve.
The authors are based on a refinement of the famous Kushnirenko-Bernstein formula
for the computation of the isolated roots of a polynomial system in the torus,
given in~\cite{Philippon-Sombra07}.
As a corollary, they obtain the optimal implicit polygon in case of generic coefficients.
They also address the inverse question, namely when can a given polygon be the
Newton polygon of an implicit curve.

In many applications, such as computing the $u$-resultant or in implicitization,
the resultant coefficients are themselves polynomials in a few parameters,
and we wish to study the resultant as a polynomial in these parameters.
In~\cite{EmKoPa07}, we computed the Newton polytope of specialized resultants
while avoiding to compute the entire secondary polytope;
our approach was to examine the silhouette of the latter with respect
to an orthogonal projection.
We presented a method to compute the vertices of the implicit polygon
of polynomial or rational parametric curves, when denominators differ.
We also introduced a method and gave partial results for the case when denominators
are equal; the latter method is fully developed in the present article.

Our main contribution is to determine the vertex structure of the implicit polygon.
This polygon is optimal if the coefficients of the parametric polynomials are sufficiently
generic with respect to the given supports, otherwise it contains the true polygon.
Our presentation is self-contained.
In the case of polynomially parameterized curves and rationally parameterized curves with
different denominators (which includes the case of Laurent polynomial parameterizations),
the Cayley trick reduces the problem to
computing regular triangulations of point sets in the plane.
In retrospect, our methods are similar to those employed in~\cite{GKZ90}.
We also determine certain coefficients in the implicit equation.
If the denominators are identical, two-dimensional mixed subdivisions are examined;
we show that only subdivisions obtained by {\em linear} liftings are relevant.

The following proposition collects our main corollaries regarding
the shape of the implicit polygon in terms of corner cuts on an initial polygon:
$\phi$ is the implicit equation and $N(\phi)$ is the implicit polytope.
\begin{prop}\label{P:Nf-cuts}
$N(\phi)$ is defined by a polygon with one vertex at the origin
and two edges lying on the axes. In particular,
\\ Polynomial parameterizations:
$N(\phi)$ is defined by a right triangle 
with at most one corner cut, which excludes the origin.
\\ Rational parameterizations with equal denominators:
$N(\phi)$ is defined by a right triangle 
with at most two cuts, on the same or different corners.
\\ Rational parameterizations with different denominators:
$N(\phi)$ is defined by a quadrilateral 
with at most two cuts, on the same or different corners.
\end{prop}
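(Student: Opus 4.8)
The plan is to assemble the proposition from the explicit vertex descriptions established earlier: in each of the three cases the preceding sections identify, by coordinates, the points of $\R^2$ that can occur as vertices of $N(\phi)$, so the remaining task is to recognize the resulting polygons. Recall that $\phi$ is, up to a monomial factor and a nonzero generic constant, a specialization of the sparse resultant of the elimination system; that, by Corollary~\ref{C:surjection-mxd-extreme}, the extreme monomials of this resultant are images of mixed subdivisions of the input Newton polygons; and that, through the Cayley trick, these mixed subdivisions are the regular triangulations of a planar point set $\mathcal{A}$ built from the supports. Since the linear projection $\pi$ forgetting the coefficient coordinates sends the resultant polytope onto a polygon containing $N(\phi)$, with equality for generic coefficients, the vertices of $N(\phi)$ are among the $\pi$-images of the finitely many relevant triangulations, and it suffices to compute these images.

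I would work in the normalization in which $N(\phi)$ is translated into the nonnegative quadrant; the first assertion, that it then has a single vertex at the origin and two edges along the axes, comes from two distinguished families of triangulations that are always present: the one selecting on each support its minimal brick maps under $\pi$ to the origin, while for each coordinate axis the corresponding pushing triangulation has $\pi$-image a vertex on that axis, so the two axis edges and the origin vertex are forced. For a polynomial parameterization the elimination system is $\{\,\x-p_1(\t),\ \y-p_2(\t)\,\}$ and $\mathcal{A}$ lies on two parallel lines; a short classification of its regular triangulations shows that at most one further extreme monomial can appear, and its $\pi$-image truncates a corner of the base right triangle other than the origin, since a cut at the origin corner would require a lattice point strictly below the constant term. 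For rational parameterizations with a common denominator one adjoins the support of $q(\t)$ to both equations; here I would invoke the earlier reduction to \emph{linear} liftings, which confines the admissible mixed subdivisions to a small family and hence produces at most two further extreme monomials, giving a right triangle with at most two corner cuts, possibly at the same corner. For distinct denominators $q_1,q_2$ the two denominators contribute independently to the behaviour at $\t=\infty$, so the base polygon acquires an extra edge and is a quadrilateral rather than a triangle already from the distinguished triangulations; the same analysis of the remaining triangulations adds at most two cuts, for a total of at most $3+1+2=6$ vertices.

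The main obstacle is exactly the combinatorial bookkeeping of the last paragraph: one must show that the regular triangulations of $\mathcal{A}$ other than the axis and origin ones contribute only the stated number of new vertices, and that these appear precisely as truncations of the indicated corners, neither creating new edges nor displacing the base polygon. In the equal-denominator case this rests entirely on the linear-lifting reduction, which is what keeps the family of relevant subdivisions small enough to control; in the polynomial and distinct-denominator cases it is a finite but somewhat delicate classification of the regular triangulations of a configuration with few layers, made tractable by the observation that $\pi$ collapses the lifting direction, so that many combinatorially distinct triangulations share a single image vertex. Once these finitely many image points are listed, checking that they span exactly the claimed polygons, and that genericity forces every listed monomial to occur in $\phi$, is routine.
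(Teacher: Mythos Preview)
Your overall strategy---reduce the proposition to the explicit vertex lists computed in the later sections and then read off the shape---is exactly what the paper does: Proposition~\ref{P:Nf-cuts} is stated in the introduction as a summary, and its content is nothing more than Theorems~\ref{T:Nfpolynomial}, \ref{Tcase2a}, \ref{Tcase2}, and the theorems of Section~\ref{S:Rat_dif_denom}. So the plan is fine.

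However, you have misread the statement, and this leads to a concrete error in your argument. The phrase ``with one vertex at the origin and two edges lying on the axes'' describes the \emph{initial} polygon from which $N(\phi)$ is obtained by corner cuts; it does \emph{not} assert that the origin is a vertex of $N(\phi)$ itself. In the polynomial clause, ``at most one corner cut, which excludes the origin'' means that the single possible cut is \emph{at} the origin corner, removing the origin from $N(\phi)$. This is precisely what Theorem~\ref{T:Nfpolynomial} says: when neither $P_0$ nor $P_1$ has a constant term, the vertices are $(a_{11},0),(a_{1m},0),(0,a_{0n}),(0,a_{01})$, a right triangle with its origin corner truncated. Your claim that the cut ``truncates a corner of the base right triangle other than the origin, since a cut at the origin corner would require a lattice point strictly below the constant term'' is therefore backwards.

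The same misreading infects your first general paragraph: you argue that a distinguished ``minimal brick'' triangulation always forces the origin as a vertex of $N(\phi)$. It does not. In the polynomial case without constant terms there is no mixed subdivision whose image under $\pi$ is $(0,0)$; the smallest $x$-exponent attainable with $y$-exponent zero is $a_{11}>0$. Likewise, in the equal-denominator case the two cuts may both occur at the origin corner (see Example~\ref{Exam:samedenom_5vert}, where $N(\phi)$ has vertices $(7,0),(0,7),(0,3),(3,1),(6,0)$ and the origin is absent). So the assertion that ``the two axis edges and the origin vertex are forced'' is simply false for $N(\phi)$, and your subsequent placement of the cuts in each case inherits this error. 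Once you correct the reading, the proof really is just a matter of quoting the vertex lists from the three sections and observing which corners of the base polygon they truncate.
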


\begin{example}\label{Exam:intro}
Consider:
$$
x=\frac{t^6+2t^2}{t^7+1} , y=\frac{t^4-t^3}{t^7+1} ,
$$
Theorem~\ref{Tcase2a} yields vertices
$(7,0),(0,7),(0,3),(3,1),(6,0)$, which define the actual implicit polygon
because the implicit equation is\\[-0.5em]

$
\phi= -32y^4-30x^3y^2-x^4y-12x^2y^2-3x^3y-7x^6y-2x^7+20xy^3+280x^2y^5-7^3y^4x-70x^4y^3-\\
22x^3y^3-49x^5y^2-21x^4y^2+11x^5y+216y^5+129y^7-248y^6+70xy^6+185xy^5+24y^3+100xy^4+\\
43x^2y^3+72x^2y^4+3x^6.\\[-0.5em]
$

Changing the coefficient of $t^2$ to -1, leads to an implicit polygon
with 4 cuts which is contained in the polygon predicted by theorem~\ref{Tcase2a}.
This shows the importance of the genericity condition on the coefficients of the
parametric polynomials. See example~\ref{Exam:samedenom_5vert} for details.

An instance where the implicit polygon has~6 vertices is:
$$
x=\frac{ t^3+2t^2+t }{ t^2+3t-2 }, ~ y= \frac{ t^3-t^2 }{ t-2 }.
$$
Our results in section~\ref{S:Rat_dif_denom} yield implicit vertices
$\{(0,1), (0,3), (3,0), (1,3), (2,0), (3,2)\}$
which define the actual implicit polygon.
See example~\ref{EX:DAndrea} for details.

\begin{figure}[ht]
\centering\includegraphics[width=0.35\textwidth]{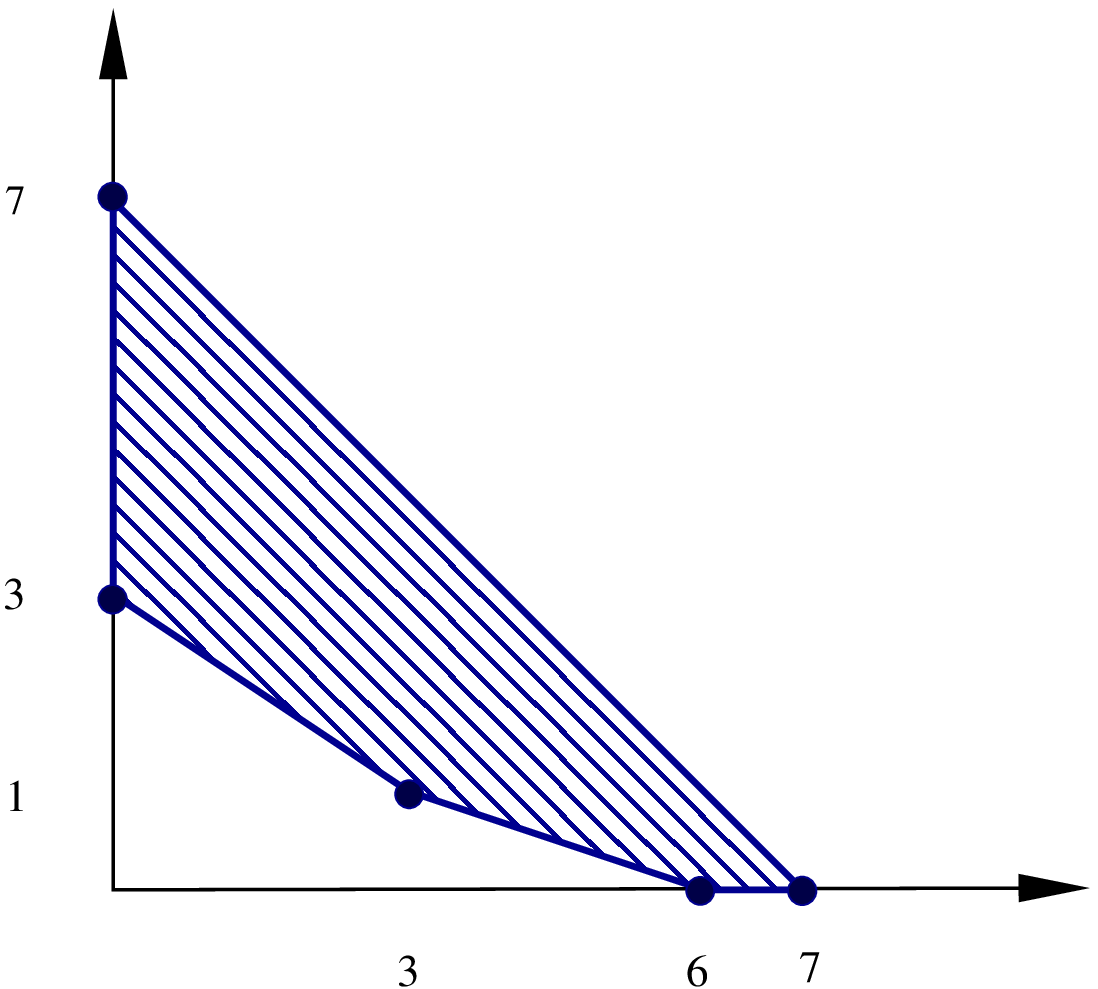}
\hskip4em
\includegraphics[width=0.32\textwidth]{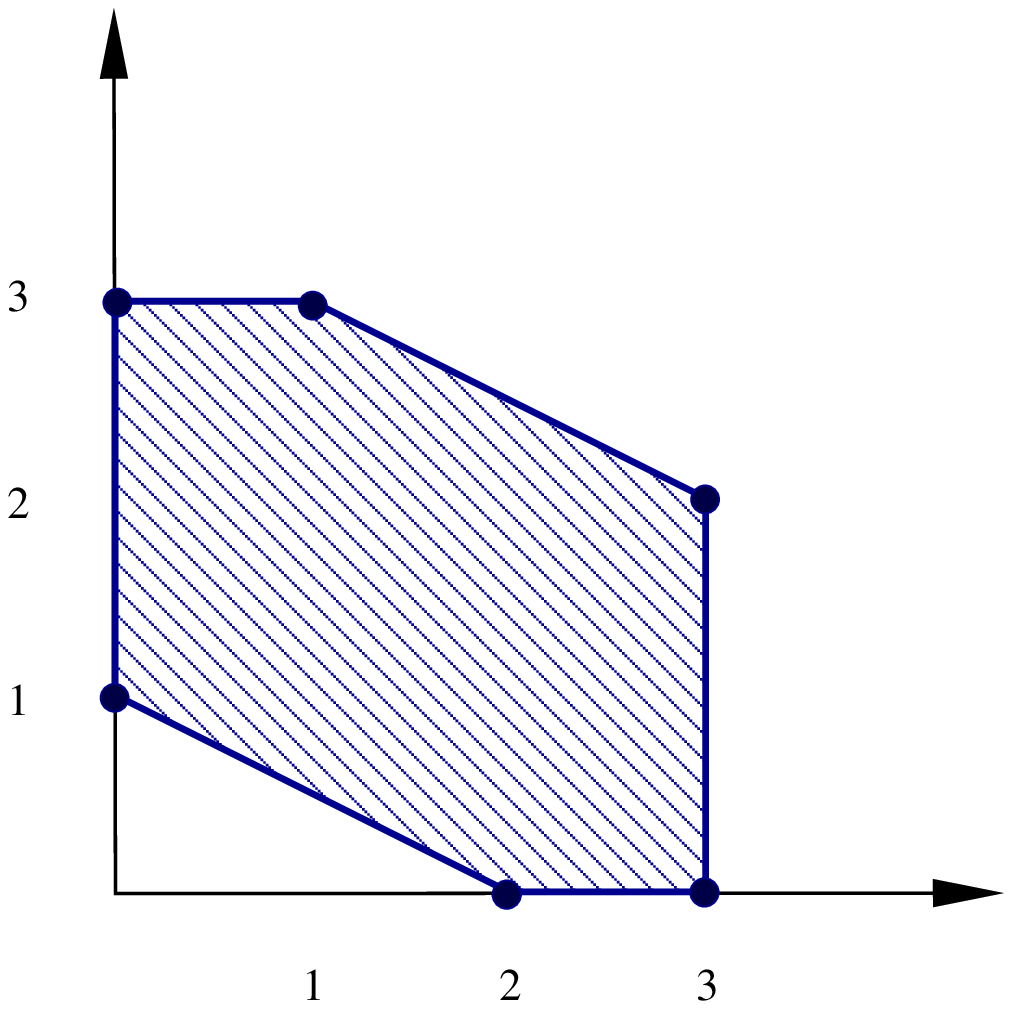}
\caption{The implicit polygons of the curves of example~\ref{Exam:intro}.}
\label{Fig:Nf_exam_intro}
\end{figure}
\end{example}

The paper is organized as follows.
The next section recalls concepts from sparse elimination
and focuses on the Newton polytope of the sparse resultant.
It also defines the problem of computing the implicit polytope.
Section~\ref{S:Rat_ident_denom} solves the problem for rational parameterizations
with identical denominators, by studying relevant mixed subdivisions.
Section~\ref{Spolynomial} determines the implicit polygon
for polynomially parameterized curves.
Section~\ref{S:Rat_dif_denom}
refers to rational parametric curves, where denominators are different.
We conclude with further work in section~\ref{Sfurther}.


\section{Sparse elimination and Implicitization}\label{Spreliminaries}

We first recall some notions of sparse elimination theory;
see~\cite{GKZ} for more information.
Then, we define the problem of implicitization.

Given a polynomial $f$, its \emph{support} $A(f)$ is the set of the exponent vectors
corresponding to monomials with nonzero coefficients.
Its \emph{Newton polytope} $N(f)$ is the convex hull of $A(f)$, denoted CH$(A(f))$.
The \emph{Minkowski sum} $A+B$ of (convex polytopes) $A,B \subset \R^n$ is the set
$A+B=\{a+b~|~ a \in A, b\in B\} \subset \R^n.$

\begin{defn}
Consider Laurent polynomials $f_i, ~i=0,\dots,n$, in $n$ variables, with fixed supports.
Let $\c=(c_{0,0},\ldots,c_{0,s_0},\ldots,c_{n,0},\ldots,c_{n,s_n})$ be the
vector of all nonzero (symbolic) coefficients.
The sparse (or toric) resultant $\RR$ of the $f_i$
is the unique, up to sign, irreducible polynomial in $\Z[\c]$, which vanishes iff
the $f_i$ have a common root in the toric variety corresponding to the supports of the $f_i$.
\end{defn}

Let the system's Newton polytopes be $P_0,\ldots,P_n\subset \R^n$.
Their {\em mixed volume} is the unique integer-valued function, which is symmetric,
multilinear with respect to Minkowski addition, and satisfies MV$(Q,\dots,Q)=n!$ Vol$(Q)$,
for any lattice polytope $Q\subset\R^n$, where Vol$(\cdot)$ indicates Euclidean volume.
For the rest of the paper we assume that the Minkowski sum
$P=P_0+\cdots+P_n \subset \R^n$ is $n$-dimensional.
The family of supports $A_0,\dots,A_n$ is \emph{essential} according
to the terminology of \cite[sec.~1]{Stu94}.
This is equivalent to the existence of a non-zero partial mixed volume
MV$_i=$ MV$(A_0,\ldots,A_{i-1},A_{i+1},\ldots,A_n)$, for some $i\in \{0,\dots,n\}$

A \emph{Minkowski cell} of $P$ is any full-dimensional convex polytope $B=\sum_{i=0}^{n} B_i$,
where each $B_i$ is a convex polytope with vertices in $A_i$.
We say that two Minkowski cells  $B=\sum_{i=0}^{n} B_i$ and $B'=\sum_{i=0}^{n} B_i'$
intersect properly when the intersection of the polytopes $B_i$ and $B_i'$
is a face of both and their Minkowski sum descriptions are compatible, cf.\ \cite{Santos05}.

\begin{defn} \cite[definition~1.1]{Santos05} 
A \emph{mixed subdivision} of $P$ is any family $S$ of
Minkowski cells which partition $P$ and intersect properly as Minkowski sums.
Cell $R$ is \emph{mixed}, in particular \emph{$i$-mixed} or \emph{$v_i$-mixed},
if it is the Minkowski sum of $n$ one-dimensional segments $E_j \subset P_j$,
which are called edge summands, and one vertex $v_i \in P_i$.
\end{defn}

Note that mixed subdivisions contain {\em faces} of all dimensions between~0 and $n$,
the maximum dimension corresponding to cells.
Every face of a mixed subdivision of $P$ has a unique description as
Minkowski sum of subpolytopes of the $P_i$'s.
%
A mixed subdivision is called {\em regular} if it is obtained as the projection of the
lower hull of the Minkowski sum of lifted polytopes
$\{(p_i,\omega_i(p_i))~|~ p_i \in P_i\}$.
If the lifting function $\omega:=\{\omega_i\ldots,\omega_n\}$ is sufficiently generic,
then the induced mixed subdivision is called \emph{tight}.

A monomial of the sparse resultant is called \emph{extreme} if its exponent vector
corresponds to a vertex of the Newton polytope $N(\RR)$ of the resultant.
Let  $\omega$ be a sufficiently generic lifting function.
The $\omega$-extreme monomial of $\RR$ is the monomial with exponent vector
that maximizes the inner product with $\omega$; it corresponds to a vertex of $N(\RR)$ with
outer normal vector $\omega$.

\begin{prop}{\rm \cite{Stu94}.} \label{P:Sturmf_extreme}
For every sufficiently generic lifting function $\omega$, we obtain
the $\omega$-extreme monomial of $\RR$, of the form
\begin{equation}\label{Eq:Sturmf_extreme}
\pm \prod_{i=0}^{n} \prod_{R} c_{i,v_{i}}^{\mathrm{Vol}(R)},
\end{equation}
where ${\rm Vol}(R)$ is the Euclidean volume of $R$, the second product is over
all $v_i$-mixed cells $R$ of the regular tight mixed
subdivision of $P$ induced by $\omega$, and $c_{i,v_i}$ is the
coefficient of the monomial of $f_i$ corresponding to vertex $v_i$.
\end{prop}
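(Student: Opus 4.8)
\noindent\emph{Proof plan.}
The plan is to recognize the $\omega$-extreme monomial as an initial form of $\RR$ under a one-parameter toric deformation, and then to compute that initial form by feeding the deformation into a product (Poisson-type) formula for $\RR$ and analyzing the toric solution branches via the polyhedral homotopy. First, since $\RR$ is a fixed element of $\Z[\c]$ and $\omega$ is generic, the functional $a\mapsto\langle\omega,a\rangle$ is maximized over the finite exponent set of $\RR$ at a single point, so the $\omega$-extreme monomial is well defined and equals, up to sign, the unique term of the initial form $\mathrm{in}_\omega(\RR)$. Hence it suffices to determine $\mathrm{in}_\omega(\RR)$. To do so I would substitute $c_{i,a}\mapsto t^{\omega_i(a)}\,\bar c_{i,a}$, where $t$ is a new variable and $\bar c$ a generic constant vector; then a monomial $\prod_{i,a}c_{i,a}^{e_{i,a}}$ of $\RR$ becomes $t^{\langle\omega,e\rangle}\prod_{i,a}\bar c_{i,a}^{e_{i,a}}$, so the extreme term of $\RR(\c(t))$ in $t$ (lowest or highest, according to the orientation of the lifting) carries as its coefficient the value of $\mathrm{in}_\omega(\RR)$ at $\bar c$. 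Since $\bar c$ is generic, this one numerical value pins down the monomial $\mathrm{in}_\omega(\RR)$ exactly, and the whole problem is reduced to computing an order of vanishing in $t$ together with the corresponding leading coefficient.

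Next I would invoke the product formula for the sparse resultant: treating $f_0$ as having generic coefficients, $\RR(f_0,f_1,\dots,f_n)$ equals, up to a monomial factor in the extreme coefficients of $f_1,\dots,f_n$, the product $\prod_{\xi}f_0(\xi)$ over the common zeros $\xi$ of $f_1,\dots,f_n$ in the torus, which number $\mathrm{MV}(A_1,\dots,A_n)=\mathrm{MV}_0$ counted with multiplicity (Bernstein's theorem). Applying the deformation $c_{i,a}\mapsto t^{\omega_i(a)}\bar c_{i,a}$ to this identity, the roots acquire Puiseux expansions $\xi(t)=\bigl(\bar\xi_1\,t^{u_1}(1+o(1)),\dots,\bar\xi_n\,t^{u_n}(1+o(1))\bigr)$, and the polyhedral-homotopy analysis of Huber and Sturmfels shows that the solution branches are in volume-weighted bijection with the $v_0$-mixed cells $R=v_0+E_1+\cdots+E_n$ of the regular tight mixed subdivision of $P$ induced by $\omega$: the branch with leading exponent $u$ corresponds to the cell with inner normal $(u,1)$, and it comes with multiplicity $\mathrm{Vol}(R)$, the Euclidean volume of the lattice parallelepiped $E_1+\cdots+E_n$, which equals the number of torus solutions of the associated binomial subsystem. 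Substituting such a branch into $f_0$ and extracting the extreme power of $t$ selects the monomial $c_{0,v_0}$ of $f_0$ at the vertex $v_0$ that is $(u,1)$-minimal on the $\omega_0$-lift of $P_0$ --- precisely the vertex appearing in the cell $R$. Collecting over all branches, and using that adjoining the point $v_0$ does not change the volume, yields the factor $\prod_{R}c_{0,v_0}^{\mathrm{Vol}(R)}$, the product over all $v_0$-mixed cells.

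To obtain the remaining factors $\prod_{i=1}^{n}\prod_{R}c_{i,v_i}^{\mathrm{Vol}(R)}$ of \eqref{Eq:Sturmf_extreme}, I would use that $\RR$ is symmetric in its arguments up to sign: running the computation above with $f_i$ in the generic role produces the $c_i$-part of $\mathrm{in}_\omega(\RR)$ as $\prod_{R}c_{i,v_i}^{\mathrm{Vol}(R)}$ over the $v_i$-mixed cells of the same subdivision of $P$, and since $\mathrm{in}_\omega(\RR)$ is a single monomial it factors as the product of its $c_i$-parts, which assembles exactly into \eqref{Eq:Sturmf_extreme}; the sign is left undetermined only because $\RR$ itself is fixed only up to sign. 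A built-in consistency check is that $\deg_{c_i}\RR=\mathrm{MV}_i$ while the sum of $\mathrm{Vol}(R)$ over the $v_i$-mixed cells of a mixed subdivision of $P$ equals $\mathrm{MV}(A_0,\dots,A_{i-1},A_{i+1},\dots,A_n)$. The main obstacle is the second step: making the Puiseux-series / polyhedral-homotopy bookkeeping rigorous --- proving that the branches of the deformed system are in volume-weighted bijection with the $v_0$-mixed cells of the regular tight subdivision and that each branch picks out the claimed vertex of $P_0$ with the correct multiplicity --- whereas reducing to an initial form and tracking orders of vanishing in $t$ is essentially formal. An alternative route would realize $N(\RR)$ through the secondary polytope of the Cayley configuration, matching regular triangulations with mixed subdivisions of $P$ via the Cayley trick; but the identification of cell volumes with coefficient exponents is the same crux.
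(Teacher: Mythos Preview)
The paper does not prove this proposition; it simply quotes it as a known result from \cite{Stu94} and moves on. So there is no ``paper's own proof'' to compare against.

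Your sketch is, in outline, the standard argument and is close to how Sturmfels originally established the result: reduce to computing the initial form $\mathrm{in}_\omega(\RR)$ via the substitution $c_{i,a}\mapsto t^{\omega_i(a)}\bar c_{i,a}$, apply the Poisson product formula $\RR=\pm(\text{extraneous monomial})\cdot\prod_\xi f_0(\xi)$, and track the Puiseux branches of the deformed system $f_1=\cdots=f_n=0$ using the polyhedral homotopy, which puts branches in volume-weighted bijection with the mixed cells of the subdivision induced by $\omega$. Your identification of the crux is accurate: the nontrivial content is exactly the Huber--Sturmfels bookkeeping that (i) every branch has leading exponent vector equal to the inner normal of some $v_0$-mixed cell, (ii) the number of branches with that leading exponent equals the normalized volume of the cell, and (iii) no cancellation occurs among the leading terms. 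One point to be careful with: the ``extraneous monomial'' in the product formula is itself a monomial in the coefficients of $f_1,\dots,f_n$, and you must check that under the deformation its $t$-order and its $\bar c$-monomial are compatible with the $i\ge 1$ factors of \eqref{Eq:Sturmf_extreme}; this is where the symmetry argument you invoke does real work, and in Sturmfels' treatment it is handled by an explicit identification rather than by pure symmetry. The alternative route you mention via the secondary polytope of the Cayley configuration is in fact the one emphasized in \cite{GKZ}, and it gives a cleaner structural picture at the cost of heavier machinery.
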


\begin{cor}\label{C:surjection-mxd-extreme}
There exists a surjection from the mixed cell configurations onto
the set of extreme monomials of the sparse resultant.
\end{cor}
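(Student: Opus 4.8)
The plan is to extract the map directly from Proposition~\ref{P:Sturmf_extreme} and then check surjectivity by a normal-cone argument. First I would make precise the domain: a \emph{mixed cell configuration} is the combinatorial datum $\{(R,v_i)\}$ consisting of the $v_i$-mixed cells $R$ of some regular tight mixed subdivision of $P$, each tagged with its distinguished vertex $v_i\in P_i$. To such a configuration $\mathcal C$ associate the monomial $\pm\prod_{i=0}^{n}\prod_{R} c_{i,v_i}^{\mathrm{Vol}(R)}$ of \eqref{Eq:Sturmf_extreme}. Since this expression reads off only the cells and their tags, it is well defined on $\mathcal C$, independently of which lifting realizes the subdivision. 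By the definition of tightness, $\mathcal C$ is induced by some sufficiently generic lifting $\omega$, and Proposition~\ref{P:Sturmf_extreme} then identifies the attached monomial with the $\omega$-extreme monomial of $\RR$, whose exponent vector is a vertex of $N(\RR)$ with outer normal $\omega$. Hence the attached monomial is extreme, so the map indeed lands in the set of extreme monomials.

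Next I would prove that the map is onto. Let $m$ be an extreme monomial, so its exponent vector $\mathbf e$ is a vertex of $N(\RR)\subset\R^N$, where $N=\sum_{i=0}^n(s_i+1)$ is the number of coefficients; note that a lifting function, being a choice of one value per support point, is exactly a point of $\R^N$, living in the same space as the linear functionals paired against exponent vectors. The normal cone of $N(\RR)$ at $\mathbf e$ is full-dimensional in $\R^N$, and its interior consists precisely of those $\omega$ for which $\mathbf e$ is the unique maximizer of $\langle\cdot,\omega\rangle$ over $N(\RR)$, i.e.\ for which $m$ is the $\omega$-extreme monomial. The set of liftings that are ``sufficiently generic'' in the sense of Proposition~\ref{P:Sturmf_extreme} is the complement of a lower-dimensional subset of $\R^N$, hence open and dense, so it meets this interior. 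Choosing $\omega$ in the intersection, letting $\mathcal C_\omega$ be the mixed cell configuration of the tight regular mixed subdivision it induces, and applying Proposition~\ref{P:Sturmf_extreme} once more, the monomial attached to $\mathcal C_\omega$ is the $\omega$-extreme monomial, which is $m$. Thus $m$ is in the image.

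I expect the only point genuinely needing care to be the last step: that the genericity hypothesis of Proposition~\ref{P:Sturmf_extreme} can be met \emph{inside} the prescribed vertex's normal cone, not merely somewhere in lifting space. This is where one uses that the normal cone at a vertex is full-dimensional while the non-generic liftings form a set of strictly smaller dimension, so their complement cannot avoid the cone. No injectivity is claimed, and indeed distinct configurations coming from different chambers of the relevant secondary fan can yield the same extreme monomial, which is exactly why the statement asserts a surjection and nothing stronger.
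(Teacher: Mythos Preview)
Your argument is correct. The paper states this corollary without proof, as an immediate consequence of Proposition~\ref{P:Sturmf_extreme}; your normal-cone argument is precisely the natural way to make the surjectivity explicit, and the care you take to intersect the open dense locus of generic liftings with the full-dimensional normal cone at a given vertex of $N(\RR)$ is exactly the point that needs checking.
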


Given supports $A_0,\ldots,A_n$, the Cayley embedding $\kappa$ introduces  a new point set
$$
C:=\kappa\left(A_{0},A_{1},\ldots, A_{n}\right)=\bigcup_{i=0}^{n} (A_{i} \times \{e_{i}\}) \subset \R^{2n},
$$
where $e_{i}$ are an affine basis of  $\R^n$.

\begin{prop} {\rm [The Cayley Trick] \cite{MV99,Santos05}.} \label{P:Cayley_trick}
There exists a bijection between the regular tight mixed subdivisions
of the Minkowski sum $P$ and the regular triangulations of $C$.
\end{prop}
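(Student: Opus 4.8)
The statement is classical; I would prove it by exhibiting the bijection concretely as a \emph{barycentric slicing} of the Cayley polytope, and then checking that this one map intertwines, in turn, the cell partition, the regular (lifted) structure, and the dichotomy triangulation vs.\ tight. First I would identify $\R^{2n}=\R^n\times\R^n$ and view $e_0,\dots,e_n$ as the vertices of an $n$-simplex $\Delta$ in the second factor, so that $C\subset\R^n\times\Delta$; since $P=P_0+\cdots+P_n$ is $n$-dimensional (our standing assumption), the Cayley polytope $\mathrm{conv}(C)$ is $2n$-dimensional. Write $b=\tfrac{1}{n+1}\sum_i e_i$ for the barycentre of $\Delta$. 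A short computation with barycentric coordinates shows that, for $S=\bigcup_{i=0}^{n}(S_i\times\{e_i\})$ with $\emptyset\neq S_i\subseteq A_i$, the slice $\mathrm{conv}(S)\cap(\R^n\times\{b\})$ projects in the first factor onto $\tfrac{1}{n+1}\sum_{i=0}^{n}\mathrm{conv}(S_i)$, whereas any face of $\mathrm{conv}(C)$ whose set of active indices omits some $i$ misses this slice entirely. Hence slicing at $b$ sends a polyhedral subdivision $\mathcal S$ of $\mathrm{conv}(C)$ to the family of slices of its maximal cells, which is a mixed subdivision of $\tfrac{1}{n+1}P$ — a harmless homothety of $P$, rescaling each $A_i$ correspondingly. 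Conversely, a mixed subdivision $\{\sum_i B_i\}$ of $P$ is lifted back to $\{\mathrm{conv}(\bigcup_i(V(B_i)\times\{e_i\}))\}$, and the two maps are mutually inverse once one checks that ``intersecting properly as Minkowski sums'' on the $P$ side is the same as ``intersecting in a common face'' on the Cayley side; this is exactly the notion of compatible Minkowski-sum descriptions introduced before the definition of a mixed subdivision.

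Next I would add the lifted layer. A lifting $\tilde\omega\colon C\to\R$ is the same datum as a tuple $\omega=(\omega_0,\dots,\omega_n)$ with $\omega_i=\tilde\omega|_{A_i\times\{e_i\}}$. Using the elementary fact that the face of a Minkowski sum in a given direction equals the Minkowski sum of the faces of the summands in that direction — i.e.\ that taking lower hulls commutes with Minkowski addition — one shows that for every $v\in\R^n$ there is a unique $u=u(v)\in\R^n$ for which the lower face of $\{(c,\tilde\omega(c)):c\in C\}$ in direction $(v,u,1)$ has all $n+1$ indices active, and that the barycentric slice of that face is $\tfrac{1}{n+1}\sum_i F_i(v)$, where $F_i(v)$ is the lower face of $\{(a,\omega_i(a)):a\in A_i\}$ in direction $(v,1)$. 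Consequently the slicing bijection of the previous paragraph carries the regular subdivision of $C$ induced by $\tilde\omega$ onto the regular mixed subdivision of $P$ induced by $\omega$, and ``$\tilde\omega$ sufficiently generic'' onto ``$\omega$ sufficiently generic''.

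Finally I would translate ``triangulation'' into ``tight''. A maximal cell of $\mathrm{conv}(C)$ corresponding to a Minkowski cell $B=B_0+\cdots+B_n$ is the Cayley polytope of the $B_i$'s; it has exactly $\sum_i|V(B_i)|$ vertices and dimension $n+\dim\!\big(\sum_i B_i\big)$, so, being $2n$-dimensional, it is a simplex if and only if each $B_i$ is a simplex and $\dim\!\big(\sum_i B_i\big)=\sum_i\dim B_i\ (=n)$ — precisely the condition that $B$ be a tight (fine) Minkowski cell. Therefore the bijection just constructed restricts to a bijection between regular subdivisions of $C$ all of whose cells are simplices, i.e.\ regular triangulations of $C$, and regular mixed subdivisions of $P$ all of whose cells are tight, i.e.\ regular tight mixed subdivisions of $P$; this is the assertion.

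The main obstacle is not any single computation but the bookkeeping that one slicing map simultaneously respects all three structures: that proper intersection of Minkowski cells matches a common-face intersection of Cayley cells; that regularity transfers in both directions via the commutation of lower hulls with Minkowski sums \emph{at the level of the whole polyhedral complex}, not merely cell by cell; and that the dimension count identifying simplices with tight cells is uniform across the subdivision. Verifying these compatibilities — especially the first, about proper intersection — is where the argument must be carried out in detail rather than sketched.
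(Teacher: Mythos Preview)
The paper does not give its own proof of this proposition: it is stated with attribution to \cite{MV99,Santos05} and used as a black box, with no argument supplied. So there is nothing in the paper to compare your proposal against.

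That said, your sketch is essentially the standard proof one finds in those references: slice the Cayley polytope $\mathrm{conv}(C)$ by the affine subspace $\R^n\times\{b\}$ through the barycentre of the simplex $\Delta=\mathrm{conv}(e_0,\dots,e_n)$, observe that the slice of each cell is (a homothetic copy of) the corresponding Minkowski cell, and then check that liftings $\tilde\omega$ on $C$ and tuples $(\omega_0,\dots,\omega_n)$ on the $A_i$ are the same data, so regularity transfers. The dimension count identifying simplicial Cayley cells with tight Minkowski cells is also the standard one. Your identification of the delicate point --- that ``proper intersection as Minkowski sums'' on the $P$ side must match ``common-face intersection'' on the Cayley side --- is exactly right; this is where Santos's formulation of mixed subdivisions (which the paper adopts) is designed to make the bijection go through cleanly. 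In short, your proposal is correct and is the proof the cited sources give, but the present paper simply quotes the result.
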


We now consider the general problem of implicitization.
Let $h_0,\ldots,h_n \in \C[t_1,\ldots,t_r]$ be polynomials in parameters $t_i$.
The implicitization problem is to compute the prime ideal $I$ of all polynomials
$\phi \in \C[x_0,\ldots,x_n]$ which satisfy $\phi(h_0,\ldots,h_n) \equiv 0$ in
$\C[t_1,\ldots,t_r]$.
We are interested in the case where $r=n$, and generalize $h_i$ to be rational expressions
in $\C(t_1,\ldots,t_n)$.
Then $I=\langle \phi \rangle$ is a principal ideal.
Note that $\phi\in\C[x_0,\dots,x_n]$ is uniquely defined up to sign.
The $x_i$ are called implicit variables, $A(\phi)$ is the \emph{implicit support}
and $N(\phi)$ is the implicit polytope.
Usually a rational parameterization may be defined by
\begin{equation}\label{Eq:parameterization}
x_i=\frac{P_i(\t)}{Q(\t)}, ~ i=0,\ldots,n,~~
\gcd(P_0(\t),\ldots,P_n(\t),Q(\t))=1,~ \t=(t_1,\ldots,t_n) .
\end{equation}
Alternatively, the input may be
\begin{equation}\label{Eq:parameterization2}
x_i=\frac{P_i(\t)}{Q_i(\t)}, ~ i=0,\ldots,n,~~ \gcd(P_i(\t),Q_i(\t))=1,~ \t=(t_1,\ldots,t_n).
\end{equation}
In both cases, all polynomial have fixed supports.
We assume that the {\em degree} of the parameterization equals~1.
This avoids, e.g., having all terms in $t^a$ for some $a>1$.

\begin{prop}\label{P:Degree_bounds}
Consider system~(\ref{Eq:parameterization}) and
let $S\subset\Z^n$ be the union of the supports of polynomials $x_iQ-P_i$.
Then, the total degree of the implicit equation $\phi$ is bounded by
the volume of the convex hull CH$(S)$ multiplied by  $n!$.
The degree of $\phi$ in $x_j$ is bounded by the mixed volume
of the $f_i, ~i\neq j$.
\end{prop}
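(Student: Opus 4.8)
The plan is to realize $\phi$ as a specialization of the sparse resultant and then bound its degree via the classical degree estimates for the toric resultant. First I would set $f_i := x_i Q(\t) - P_i(\t)$ for $i=0,\dots,n$, viewing the $x_i$ as parameters. A point $(x_0,\dots,x_n)$ lies on the parameterized hypersurface exactly when the system $f_0=\cdots=f_n=0$ has a common solution $\t$ in the torus; hence the sparse resultant $\RR$ of $f_0,\dots,f_n$, specialized at these coefficients, vanishes on the closure of the image, and since the parameterization has degree $1$ this specialization is (up to a nonzero factor) a power of $\phi$. Because the implicit ideal is principal and $\phi$ irreducible, the degree of $\phi$ is at most the degree of this specialized resultant in the appropriate variables.

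Next I would invoke the known bounds on $\deg\RR$. Treating the $x_i$ as unknowns occurring only in the coefficients of $f_i$, the degree of $\RR$ in the coefficient vector of $f_i$ equals the mixed volume $\mathrm{MV}_i = \mathrm{MV}(A_0,\dots,A_{i-1},A_{i+1},\dots,A_n)$ of the remaining supports. Since $x_j$ appears only among the coefficients of $f_j$ (namely attached to the monomials of $Q$), the degree of the specialized resultant in $x_j$ — and therefore $\deg_{x_j}\phi$ — is bounded by $\mathrm{MV}_j$, which gives the second assertion. For the total degree, I would use the total-degree bound on $\RR$: substituting generic linear forms in a single new variable for each coefficient, $\deg\RR = \sum_{i=0}^n \mathrm{MV}_i$; specializing and using multilinearity and the normalization $\mathrm{MV}(Q,\dots,Q)=n!\,\mathrm{Vol}(Q)$ with $Q = \mathrm{CH}(S)$ containing all the $A_i$, one gets that the total degree of $\phi$ is at most $n!\,\mathrm{Vol}(\mathrm{CH}(S))$. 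Concretely, $\sum_i \mathrm{MV}(A_0,\dots,\widehat{A_i},\dots,A_n) \le \sum_i \mathrm{MV}(Q,\dots,Q)/ \binom{\cdot}{\cdot}$-type monotonicity gives $\le n!\,\mathrm{Vol}(Q)$ after collecting terms, since each $A_i \subseteq S \subseteq Q$ and mixed volume is monotone.

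The main obstacle is the passage from "the specialized resultant is divisible by $\phi$" to controlling the multiplicity with which $\phi$ occurs and ruling out that the specialization collapses to zero. One must check that the family of supports of the $f_i$ is essential (guaranteed by our standing assumption that $P = P_0+\cdots+P_n$ is $n$-dimensional and some $\mathrm{MV}_i \neq 0$), so that $\RR$ genuinely depends on the relevant coefficients and does not vanish identically under the specialization; the degree-$1$ hypothesis on the parameterization prevents the image from dropping dimension. Once non-vanishing is secured, the inequality $\deg \phi \le \deg(\text{specialized }\RR) \le \deg \RR$ is immediate, and the two bounds follow. I would treat the essential/non-vanishing bookkeeping carefully but expect the volume estimates themselves to be routine given Proposition~\ref{P:Sturmf_extreme} and standard mixed-volume monotonicity.
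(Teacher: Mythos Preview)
The paper states this proposition without proof, so there is no argument of its own to compare against; I can only assess correctness. Your treatment of $\deg_{x_j}\phi$ is fine: the sparse resultant is homogeneous of degree $\mathrm{MV}_j$ in the coefficients of $f_j$, and $x_j$ occurs only among those coefficients, so $\deg_{x_j}\phi \le \mathrm{MV}_j$ follows after checking non-vanishing of the specialization.

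The total-degree argument, however, has a real gap. You bound the total degree of the specialized resultant by $\sum_{i} \mathrm{MV}_i$ and then assert, via ``monotonicity,'' that this sum is at most $n!\,\mathrm{Vol}(\mathrm{CH}(S))$. That inequality is false. Already for $n=1$ with $P_0,P_1,Q$ of degree $d$ and full support one has $A_0=A_1=S=\{0,\dots,d\}$, hence $\mathrm{MV}_0=\mathrm{MV}_1=d$ and $\sum_i \mathrm{MV}_i = 2d$, whereas $n!\,\mathrm{Vol}(\mathrm{CH}(S))=d$. In general, when all $A_i$ coincide with $S$ the sum is $(n{+}1)\cdot n!\,\mathrm{Vol}(\mathrm{CH}(S))$. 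Monotonicity of mixed volume only yields $\mathrm{MV}_i \le n!\,\mathrm{Vol}(\mathrm{CH}(S))$ term by term, not for the sum; the hand-wave with a ``$\binom{\cdot}{\cdot}$-type'' correction does not repair this.

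The standard way to get the stated bound is geometric rather than via the full resultant degree. The total degree of $\phi$ equals the number of intersection points of the hypersurface with a generic line in the implicit space. Pulling a generic line back through the parameterization produces a system of $n$ equations in $\t$, each of the form $\sum_j \lambda_{kj} P_j(\t) + \mu_k Q(\t)=0$, whose support is contained in $S$. Bernstein's theorem then bounds the number of torus solutions by $\mathrm{MV}(\mathrm{CH}(S),\dots,\mathrm{CH}(S)) = n!\,\mathrm{Vol}(\mathrm{CH}(S))$, and the degree-$1$ hypothesis on the parameterization identifies this count with $\deg\phi$. You should replace your summation argument with this intersection-count argument.
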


When the rational parameterization is given by equations (\ref{Eq:parameterization2}),
we have the following.

\begin{cor}
Let $S= (A(P_i)+\sum_{j=0,j\ne i}^{n} A(Q_j)) \cup(\sum_{i=0}^n A(Q_i))$.
The total degree of the implicit equation $\phi$ is bounded by
the volume of the convex hull CH$(S)$ multiplied by  $n!$.
\end{cor}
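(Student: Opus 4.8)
The plan is to reduce the corollary to Proposition~\ref{P:Degree_bounds} by the standard trick of clearing a common denominator. Given a parameterization of the form~(\ref{Eq:parameterization2}), set $Q(\t):=\prod_{i=0}^{n} Q_i(\t)$, which plays the role of the common denominator in~(\ref{Eq:parameterization}). Then $x_i = P_i(\t)/Q_i(\t) = \widetilde{P_i}(\t)/Q(\t)$, where $\widetilde{P_i}(\t) = P_i(\t)\prod_{j\ne i} Q_j(\t)$. First I would observe that the implicit equation is unchanged by this rewriting: a point $(x_0,\dots,x_n)$ lies on the image of~(\ref{Eq:parameterization2}) exactly when it lies on the image of the rewritten system (on the open locus where all $Q_i$ are nonzero, which is all that matters for the closure defining $\phi$), so $\phi$ is the same polynomial in both cases.

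Next I would apply Proposition~\ref{P:Degree_bounds} to the rewritten parameterization $x_i = \widetilde{P_i}(\t)/Q(\t)$. That proposition says the total degree of $\phi$ is bounded by $n!$ times the volume of $\mathrm{CH}(S')$, where $S'$ is the union of the supports of the polynomials $x_i Q - \widetilde{P_i}$, i.e.\ $S' = \bigcup_{i=0}^{n}\bigl(A(\widetilde{P_i}) \cup A(Q)\bigr)$. It remains to check that $\mathrm{CH}(S') \subseteq \mathrm{CH}(S)$ for the set $S$ in the statement, since volume is monotone under inclusion. For this I would use the elementary fact that $A(fg) \subseteq A(f) + A(g)$ for any polynomials $f,g$ (every monomial of a product is a product of monomials of the factors). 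Hence $A(\widetilde{P_i}) = A\bigl(P_i\prod_{j\ne i}Q_j\bigr) \subseteq A(P_i) + \sum_{j\ne i} A(Q_j)$, and $A(Q) = A\bigl(\prod_{i} Q_i\bigr) \subseteq \sum_{i=0}^{n} A(Q_i)$. Therefore $S'$ is contained in $\bigl(\bigcup_i (A(P_i)+\sum_{j\ne i}A(Q_j))\bigr) \cup \bigl(\sum_i A(Q_i)\bigr) = S$, and taking convex hulls gives $\mathrm{CH}(S')\subseteq \mathrm{CH}(S)$, which finishes the proof.

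The only subtle point — and the one I would state carefully rather than the routine support computations — is the claim that passing from~(\ref{Eq:parameterization2}) to the common-denominator form does not change the implicit polynomial $\phi$, and that the hypotheses of Proposition~\ref{P:Degree_bounds} (in particular $\gcd(\widetilde{P_0},\dots,\widetilde{P_n},Q)=1$, needed so that $I=\langle\phi\rangle$ is principal with that $\phi$) are still met, or at worst that dropping a spurious common factor only shrinks the relevant support. Since the corollary only asserts an upper bound on $\deg\phi$, any common factor introduced by the rewriting can only be divided out, which again can only decrease the supports involved; so the inclusion $\mathrm{CH}(S')\subseteq\mathrm{CH}(S)$ and the degree bound survive. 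Everything else is bookkeeping with Minkowski sums of supports.
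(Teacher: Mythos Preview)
Your proof is correct and is precisely the intended argument: the paper states this corollary without proof immediately after Proposition~\ref{P:Degree_bounds}, and the reduction you carry out---clearing to the common denominator $Q=\prod_i Q_i$, applying Proposition~\ref{P:Degree_bounds}, and using $A(fg)\subseteq A(f)+A(g)$ to bound the resulting supports by $S$---is exactly what that placement implies. Your remark about the gcd condition possibly failing after the rewriting, and why this is harmless for an upper bound, is a point the paper leaves implicit.
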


The implicit supports predicted solely by degree bounds are typically larger than optimal.

\section{Rational parameterizations with equal denominators}\label{S:Rat_ident_denom}

We study rationally parameterized curves, when both denominators are the same.
\begin{equation}\label{Eq:rat_param}
x=\frac{P_0(t)}{Q(t)}, ~ y=\frac{P_1(t)}{Q(t)},~~ \gcd(P_i(t),Q(t))=1,\,
P_i,Q\in\C[t], \, i=0,1,
\end{equation}
where the $P_i, Q$ have fixed supports and generic coefficients.
If some $P_i(t),Q(t)$ have a nontrivial GCD, then common terms are divided out and
the problem reduces to the case of different denominators.
In general, the $P_i, Q$ are Laurent polynomials, but this case can be reduced
to the case of polynomials by shifting the supports.

Applying the methods for the case of different denominators
does not lead to optimal implicit support.
The reason is that this does not exploit the fact that the coefficients of $Q(t)$
are the same in the polynomials $xQ-P_0,yQ-P_1$.
Therefore, we introduce a new variable $r$ and consider the following system
\begin{equation}\label{Eq:rat_extra_var}
f_0=x r-P_0(t), \; f_1= y r-P_1(t), \; f_2=r-Q(t) \in \C[t,r] .
\end{equation}
By eliminating $t,r$
the resultant gives, for generic coefficients, the implicit equation in $x,y$.
This is the de-homogenization of the resultant of $x_0-P_0^h , x_1-P_1^h , x_2-Q^h$,
where $P_0^h,P_1^h,Q^h$ are the homogenizations of $P_0,P_1,Q$.
This resultant is homogeneous in $x_0,x_1,x_2$ and generically equals the implicit equation
$\Phi\in\C[x_0,x_1,x_2]$ of parameterization
$\P\rightarrow\P^2 : (t:t_0)\mapsto (x_0:x_1:x_2)=(P_0^h:P_1^h:Q^h)$.

Let the input Newton segments be
$$
B_i=N(P_i), i=0,1,\; B_2=N(Q), \mbox{ where } B_i=\{b_{iL},\dots,b_{iR}\}, i=0,1,2,
$$
where $b_{iL},~b_{iR}$ are the endpoints of segment $B_i$.
The supports of the $f_i$ are
$$
A_0=\{a_{00},a_{0L},\dots,a_{0R}\}, A_1=\{a_{10},a_{1L},\dots,a_{1R}\},
A_2=\{a_{20},a_{2L},\dots, a_{2R} \} \in \N^2 ,
$$
where
\begin{itemize}
\item
each point $a_{i0}=(0,1)$, for $i=0,1,2$, corresponds to the unique term in $f_i$
which depends on $r$,
\item
each other point $a_{it}$, for $t\ne 0$, is of the form $(b_{it},0)$, for one $b_{it}\in B_i$.
\end{itemize}
One could think that index $L=1$ whereas each $R$ equals the cardinality of the
respective $B_i$.
By the above hypotheses either $A_2$ or both $A_0,A_1$ contain $(0,0)$.

\begin{lem}\label{Lmv1mv2}
MV$_{\R}(B_i\cup B_j) =$ MV$_{\R^2}(A_i,A_j),~ i,j \in \{0,1,2\}$,
where MV$_{\R^d}$ denotes mixed volume in $\R^d$.

\begin{proof}
Let $B_i= [ m_i,l_i ],~B_j= [ m_j,l_j ]$ be intervals in $\N$.
If $m_i\le m_j$ and $l_i\le l_j$, then MV$_{\R}(B_i\cup B_j) = l_j-m_i$.
Consider a mixed subdivision of $A_i+A_j$,
with unique mixed cell $( (0,1),(m_i,0)) + ( (0,1),(l_j,0))$,
hence MV$_{\R^2}(A_i,A_j)=l_j-m_i.$
If $m_i\le m_j\le l_j\le l_i$, then MV$_{\R}(B_i\cup B_j) = l_i-m_i$, and
a similar subdivision as above yields a unique mixed cell with this volume.
The rest of the cases are symmetric.
\end{proof}
\end{lem}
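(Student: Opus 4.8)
The plan is to reduce the claim, which is about mixed volumes of point configurations in $\R^2$, to an elementary computation with lengths of intervals in $\R$, exactly along the lines already sketched in the statement. The key observation is that each support $A_i$ consists of the single point $a_{i0}=(0,1)$ together with a copy of $B_i$ sitting on the $x$-axis, i.e.\ $A_i = \{(0,1)\}\cup (B_i\times\{0\})$. Thus each $A_i$ is a planar configuration whose convex hull is a triangle (or a segment) with apex $(0,1)$ and base $B_i$ on the line $y=0$. First I would recall that the mixed volume $\mathrm{MV}_{\R^2}(A_i,A_j)$ depends only on the Newton polygons $\mathrm{CH}(A_i)$ and $\mathrm{CH}(A_j)$, and can be computed from any tight mixed subdivision of the Minkowski sum $\mathrm{CH}(A_i)+\mathrm{CH}(A_j)$ as the total area of the mixed cells, by the standard theory recalled in Section~\ref{Spreliminaries}.

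The main step is to exhibit an explicit mixed subdivision of $\mathrm{CH}(A_i)+\mathrm{CH}(A_j)$ with a single mixed cell, whose area equals the length of the interval $\mathrm{CH}(B_i\cup B_j)$. Writing $B_i=[m_i,l_i]$ and $B_j=[m_j,l_j]$, the Minkowski sum of the two triangles is a polygon with apex $(0,2)$; a lifting that pushes the apices slightly upward produces a subdivision in which the only cell that is a genuine Minkowski sum of two edges (one from each summand) is the parallelogram spanned by the vertical segment $((0,1),(m_i,0))$ from the first summand — wait, more precisely, the mixed cell is the sum of one non-vertical base edge of $\mathrm{CH}(A_i)$ and one base edge of $\mathrm{CH}(A_j)$, both sitting on $y=0$; such a cell has one edge along the $x$-axis of length $|l_j-m_i|$ (in the generic ordering $m_i\le m_j\le l_j\le l_i$, the relevant base edges sum to an interval of length equal to the overlap, hence $l_i-m_i$, while the other case gives $l_j-m_i$) and the transverse direction has unit height, so its area is precisely the length of $\mathrm{CH}(B_i\cup B_j)$. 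Meanwhile $\mathrm{MV}_{\R}(B_i\cup B_j)$, the mixed volume of two intervals on the line, is by definition the length of the convex hull of their union, which gives the two cases $l_j-m_i$ and $l_i-m_i$ recorded in the proof sketch; the remaining configurations of the endpoints are handled by the symmetry $i\leftrightarrow j$.

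The step I expect to require the most care is verifying that the proposed subdivision is genuinely a tight mixed subdivision — i.e.\ that the cells partition the sum polygon and intersect properly as Minkowski sums — and, correspondingly, that there is exactly \emph{one} mixed cell and that its Euclidean area is read off correctly (including getting the orientation and the $\pm$ right in the endpoint inequalities). One clean way to sidestep the combinatorics of the subdivision is to invoke bilinearity of mixed volume directly: decompose $\mathrm{CH}(A_i)$ as a Minkowski sum (up to translation) involving the segment $[(0,0),(0,1)]$ and the horizontal segment $B_i\times\{0\}$, do the same for $A_j$, expand $\mathrm{MV}_{\R^2}$ multilinearly, and note that the only surviving term pairs the two horizontal segments, since a horizontal segment and a vertical segment have mixed volume $1$ times their (unit) lengths, whereas two parallel segments contribute $0$; collecting terms yields $\mathrm{MV}_{\R^2}(A_i,A_j) = \mathrm{length}(B_i\times\{0\} + B_j\times\{0\}) = \mathrm{length}(\mathrm{CH}(B_i\cup B_j)) = \mathrm{MV}_{\R}(B_i\cup B_j)$. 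Either route closes the lemma; I would present the mixed-subdivision argument since it also makes the location of the mixed cell explicit, which is the form needed for applying Proposition~\ref{P:Sturmf_extreme} later.
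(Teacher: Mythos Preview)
Your overall plan matches the paper's, but the central computation goes wrong precisely at the point where you write ``wait''. The mixed cell is \emph{not} built from the horizontal base edges of the two triangles: those edges both lie on the line $y=0$, hence are parallel, and their Minkowski sum is a one-dimensional segment, not a $2$-cell (so there is no ``transverse direction of unit height''). The correct mixed cell---the one the paper writes down---is the parallelogram spanned by the two \emph{slanted} edges from apex to base, namely the edge $((0,1),(m_i,0))$ from $A_i$ and the edge $((0,1),(l_j,0))$ from $A_j$ in the case $m_i\le m_j,\ l_i\le l_j$. Its area is $|m_i\cdot(-1)-(-1)\cdot l_j|=l_j-m_i$, which is exactly the length of $\mathrm{CH}(B_i\cup B_j)$. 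You had essentially written that edge down before talking yourself out of it.

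Your fallback multilinearity route also fails, for two independent reasons. First, $\mathrm{CH}(A_i)$ is a triangle with apex $(0,1)$ and base $B_i\times\{0\}$; it is \emph{not} the Minkowski sum of the vertical segment $[(0,0),(0,1)]$ with the horizontal segment $B_i\times\{0\}$---that sum is a rectangle. Second, even granting the (false) decomposition, expanding bilinearly gives $(l_i-m_i)+(l_j-m_j)=\mathrm{length}(B_i+B_j)$, the length of the Minkowski \emph{sum} of the two intervals, whereas $\mathrm{MV}_{\R}(B_i\cup B_j)$ is the length of the convex hull of their \emph{union}; for $B_i=[0,2]$, $B_j=[1,3]$ these are $4$ and $3$ respectively. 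So this alternative cannot be salvaged as written. Stick with the slanted-edge parallelogram; that is exactly the paper's argument.
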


Now, let $u=\max\{{b_{0R}, b_{1R}, b_{2R}}\}$.
Let $C_i = $CH$(A_i)$ and consider
the mixed subdivisions of $C = C_0 + C_1 + C_2$.
The following points lie on the boundary of $C$:
$(0,3), (0,2), (u,2), (b_{0L}+b_{1L}+b_{2L},0)$ and $(b_{0R}+b_{1R}+b_{2R},0)$.

The vertices of implicit Newton polytope $N(\Phi)$
correspond to monomials in $x_0,x_1,x_2$;
the power of each $x_i$ is determined by the volumes
of $a_{i0}$-mixed (or simply $i$-mixed) cells, for $i=0,1,2$.
This leads us to computing mixed subdivisions of three polygons in the plane.

\begin{lem}[Cell types]\label{Ltypes}
In any mixed subdivision of $C$, the $i$-mixed cells, with vertex summand $a_{i0}$,
for some $i\in\{0,1,2\}$, have an edge summand $(a_{j0},a_{jh}),\ i\ne j,\ h>0$.
Their second edge summand is from $B_l$, where $\{i,j,l\}=\{0,1,2\}$ and
classifies the $i$-mixed cells in two types:
\\ (I)
If it is $(a_{l0},a_{lm})$, where $a_{lm} =(b_{lm},0)$,
then the cell vertices are $(0,3),(b_{jh},2)$, $(b_{lm},2)$, $(b_{jh} + b_{lm},1)$.
\\ (II)
If it is $(a_{lt},a_{lm})$, where $a_{lt}=(b_{lt},0), a_{lm}=(b_{lm},0)$, then the cell
vertices are $(b_{lt},2), (b_{lm},2)$, $(b_{jh} + b_{lt},1), (b_{jh} + b_{lm},1)$.
\begin{proof}
Any mixed cell has two non-parallel edge summands,  hence one of the edges
is $(a_{j0},a_{jh})$ for some $i\ne j,\ h>0$.
The rest of the statements are straightforward.
\end{proof}
\end{lem}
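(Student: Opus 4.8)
The plan is to analyze the combinatorial structure of a single $i$-mixed cell in a mixed subdivision of $C = C_0 + C_1 + C_2$, using the fact (\emph{loc.\ cit.}~lem.~\ref{Lmv1mv2}) that each $C_i$ is a triangle with one vertex at $(0,1)$ coming from the $r$-term $a_{i0}$ and an opposite edge lying on the $x$-axis coming from $B_i$. A mixed cell $R$ with vertex summand $a_{i0}$ is by definition the Minkowski sum of $a_{i0}$ with two one-dimensional edge summands $E_j \subset C_j$ and $E_l \subset C_l$, where $\{i,j,l\} = \{0,1,2\}$, and these two edges must be non-parallel for $R$ to be full-dimensional. The key observation is that each $C_k$ ($k \ne i$) has only two kinds of edges: the two \emph{slanted} edges $(a_{k0}, a_{kh}) = ((0,1),(b_{kh},0))$ for the extreme $b_{kh} \in B_k$, all of which have direction vectors of the form $(b_{kh}, -1)$, and the one \emph{horizontal} edge on the $x$-axis of the form $(a_{kt}, a_{km}) = ((b_{kt},0),(b_{km},0))$ with direction $(b_{km} - b_{kt}, 0)$. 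Two slanted edges from $C_j$ and $C_l$, having directions $(b_{jh}, -1)$ and $(b_{lm}, -1)$, are parallel only if $b_{jh} = b_{lm}$, which generically fails; so the first step is to argue that at least one of $E_j$, $E_l$ must be slanted (to supply a nonzero second coordinate), and the remaining freedom is whether the other is slanted too (type I) or horizontal (type II).

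First I would establish the claim about $(a_{j0}, a_{jh})$: since $a_{i0} = (0,1)$ contributes $0$ to the total $x$-extent and a fixed $+1$ to the $y$-coordinate, and since $R$ must be two-dimensional, the two edge summands cannot both be horizontal; hence at least one, say the one from $C_j$, is a slanted edge $(a_{j0}, a_{jh})$ with $h > 0$. (This is exactly the content of lem.~\ref{Ltypes}'s preliminary sentence, which we may invoke.) Then I would do the case split on the edge summand from $C_l = \mathrm{CH}(A_l)$, where $\{i,j,l\} = \{0,1,2\}$.

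For case (I), the summand from $C_l$ is the other slanted edge $(a_{l0}, a_{lm}) = ((0,1),(b_{lm},0))$. Then $R = \{a_{i0}\} + \mathrm{conv}\{(0,1),(b_{jh},0)\} + \mathrm{conv}\{(0,1),(b_{lm},0)\}$, and I would simply Minkowski-add the three pieces: the $y$-coordinate ranges over $1 + \{0,1\} + \{0,1\} = \{1,2,3\}$ and the four vertices of the parallelogram-plus-vertex are obtained by pairing the endpoints of the two segments, giving $(0,1)+(0,1)+(0,1) = (0,3)$, $(0,1)+(b_{jh},0)+(0,1) = (b_{jh},2)$, $(0,1)+(0,1)+(b_{lm},0) = (b_{lm},2)$, and $(0,1)+(b_{jh},0)+(b_{lm},0) = (b_{jh}+b_{lm},1)$, as stated. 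For case (II), the summand from $C_l$ is the horizontal edge $(a_{lt}, a_{lm}) = ((b_{lt},0),(b_{lm},0))$; here $R = \{a_{i0}\} + \mathrm{conv}\{(0,1),(b_{jh},0)\} + \mathrm{conv}\{(b_{lt},0),(b_{lm},0)\}$ is a genuine parallelogram (no degenerate vertex), and the same endpoint-pairing computation gives the four vertices $(b_{lt},2)$, $(b_{lm},2)$, $(b_{jh}+b_{lt},1)$, $(b_{jh}+b_{lm},1)$. I would also note that the two edges are automatically non-parallel in both cases generically, so these cells really are full-dimensional, and that no other configuration of edge summands from $C_j$ and $C_l$ can arise — in particular $E_j$ cannot be the horizontal edge of $C_j$ while $E_l$ is slanted, because by symmetry that case is already covered by relabeling $j \leftrightarrow l$ within type (II).

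I do not expect a serious obstacle here: once the edge structure of the three triangles $C_i$ is pinned down, everything reduces to a short Minkowski-sum bookkeeping, which is why lem.~\ref{Ltypes} can afford to say ``the rest of the statements are straightforward.'' The only point requiring minor care is making the genericity hypothesis explicit where non-parallelism of two slanted edges is used (i.e.\ $b_{jh} \ne b_{lm}$), and confirming that the labels $L, R$ for the extreme points of each $B_k$ are exactly the ones that can occur as the endpoint $a_{kh}$ of a slanted edge on the lower hull — but these are consequences of the setup already fixed before the lemma.
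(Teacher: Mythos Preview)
Your proposal is correct and follows the same approach as the paper, which simply observes that the two edge summands of a mixed cell must be non-parallel, hence at least one is of the slanted form $(a_{j0},a_{jh})$, and then declares the vertex computations straightforward. Two minor remarks: the edge summands in a mixed cell are arbitrary segments between points of $A_k$, not necessarily edges of the triangle $C_k$ (your slanted/horizontal dichotomy still holds, since every point of $A_k$ is either $a_{k0}=(0,1)$ or lies on the $t$-axis); and the appeal to genericity for $b_{jh}\neq b_{lm}$ is unnecessary, because a mixed cell is by definition full-dimensional, so non-parallelism of its edge summands is automatic.
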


Observe that for every type-II cell,
there is a non-mixed cell with vertices $(0,3),(b_{lt},2), (b_{lm},2)$.

\begin{example} \label{Exam:folium_section1}
We consider the folium of Descartes:
$$
x=\frac{3t^2}{1+t^3}, y=\frac{3t}{1+t^3} \Rightarrow \phi=x^3 + y^3 -3xy=0.
$$
Now $f_0=xr-3t^2, f_1=yr-3t, f_2=r-(t^3+1)$.
Figure~\ref{Fig:desc_subd} shows the Newton polygons, $C$ and two mixed subdivisions.
The shaded triangle is the only unmixed cell with nonzero area; it is a copy of $C_2$.
The first subdivision shows two cells of type~I, of area~1 and~2,
which yield factors $x$ and $y^2$ respectively, to give term $xy^2$.
The second subdivision has one cell of type~II and area~3, which yields term $x^3$.
We shall obtain an optimal support in example~\ref{Exam:folium_section}.
Now, $u=3$ which equals the total degree of $\phi$.

\begin{figure}[ht]
\centering\includegraphics[width=0.8\textwidth]{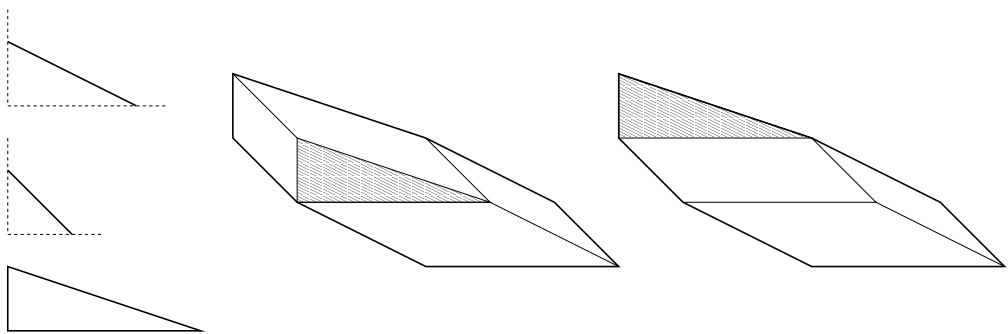}
\caption{Example~\protect\ref{Exam:folium_section1}: polygons $C_i$, and two
mixed subdivisions of $C$.  \label{Fig:desc_subd} }
\end{figure}
\end{example}

Consider segment $L$ defined by vertices $(0,2),(u,2)$ in $C$.

\begin{lem}\label{Lresdeg}
The resultant of the $f_i\in\C[t,r]$
is homogeneous, of degree $u$, wrt the coefficients of the $a_{i0}$, for $i=0,1,2$.

\begin{proof}
Consider any mixed subdivision of $C$ and the cells of type~I and~II.
Consider these cells as closed polygons:
We claim that their union contains segment $L$.
Then, it is easy to see that the total volume of these cells equals $u$.

Consider the closed cells that intersect $L$.
If the intersection lies in the cell interior, then it is a parallelogram, hence
it is mixed and its vertex summand is $(0,1)$, thus it is of type~I.
If the intersection is a cell edge, say $(a_{kl},a_{km})$, for $k\in\{0,1,2\}$
and $1\le l<m$, then the cell above $L$ is unmixed, namely
a triangle with basis $(a_{kl},a_{km})$ and apex at $(0,3)$.
In this case, the cell below $L$ is mixed of type~II.
\end{proof}
\end{lem}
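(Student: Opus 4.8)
The statement to prove is Lemma \ref{Lresdeg}: the resultant of the $f_i$ is homogeneous of degree $u = \max\{b_{0R}, b_{1R}, b_{2R}\}$ with respect to the coefficients of the terms at $a_{i0} = (0,1)$. By Proposition \ref{P:Sturmf_extreme} (Sturmfels' formula), in any $\omega$-extreme monomial the power of $c_{i,a_{i0}}$ is the sum of volumes of $a_{i0}$-mixed cells; so the total degree in all these coefficients, read off from any regular tight mixed subdivision, is the sum $\sum_i \sum_{R:\ i\text{-mixed}} \mathrm{Vol}(R)$ over cells $R$ of type I or type II as classified in Lemma \ref{Ltypes}. Homogeneity amounts to showing this sum is the same integer $u$ for every such subdivision. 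The plan is to identify the segment $L$ joining $(0,2)$ and $(u,2)$ inside $C = C_0 + C_1 + C_2$, prove that the union of the closed type-I and type-II cells covers $L$, and then compute the total volume of those cells as the length of $L$, namely $u$.

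**The covering claim.** First I would check that $L$ does lie in $C$: the point $(0,2)$ is on the boundary of $C$ (noted just before the lemma), the point $(u,2)$ is the stated boundary point with second coordinate $2$, and $C$ is convex, so the whole segment is in $C$. Next, every point $p \in L$ that is not on a lower-dimensional face of the mixed subdivision lies in the interior of exactly one cell $R_p$; since $p$ has height $2$ (strictly between the extreme heights $0$ and $3$ of $C$ along the vertical direction), the cell $R_p$ must have points both above and below height $2$, forcing $R_p$ to be two-dimensional and in fact to contain a vertical-ish span — more precisely $R_p$ meets $L$ in a segment of positive length, so $R_p$ is mixed with vertex summand $a_{i0} = (0,1)$ (the only summand contributing a vertical edge), i.e.\ $R_p$ is of type I. For the finitely many points of $L$ lying on cell boundaries: such a point lies on an edge common to the cell above and the cell below $L$; I would argue via Lemma \ref{Ltypes} that this shared edge is of the form $(a_{kl}, a_{km})$ with $1 \le l < m$ (a horizontal edge summand from some $B_k$), the cell above is the unmixed triangle with that base and apex $(0,3)$, and the cell below is mixed of type II. Hence $L$ is covered by closed type-I and type-II cells.

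**Computing the total volume.** Given the covering, I would compute the combined area of the type-I and type-II cells as follows. Each type-I cell is a parallelogram; the edge it contributes along $L$ has length equal to its horizontal edge summand from some $B_l$, and its area equals (base along $L$) $\times$ (vertical extent $1$) $=$ that base length. Each type-II cell is a parallelogram of "height $1$" in the vertical direction (its vertices have heights $2$ and $1$, per Lemma \ref{Ltypes}), intersecting $L$ in a segment whose length is again its horizontal span $b_{lm} - b_{lt}$, and its area equals that same length. So in both cases $\mathrm{Vol}(R) = \mathrm{length}(R \cap L)$. Since the cells tile $C$ and their union covers $L$ exactly once (properly intersecting Minkowski cells overlap only on lower-dimensional faces), summing gives $\sum \mathrm{Vol}(R) = \mathrm{length}(L) = u$. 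Combined with Proposition \ref{P:Sturmf_extreme} this shows every extreme monomial has total degree $u$ in the coefficients $c_{i,a_{i0}}$, which is exactly homogeneity of degree $u$ in those coefficients.

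**Main obstacle.** The routine parts are the area computations and the identification of cell shapes, which follow from Lemma \ref{Ltypes}. The delicate step is the covering claim at boundary points of the subdivision — arguing rigorously that at a point of $L$ lying on a cell wall, the two incident cells are precisely an unmixed triangle (apex $(0,3)$) above and a type-II cell below, with no other configuration possible. This needs a careful case analysis of which Minkowski-sum faces can appear at height $2$: one must rule out, for instance, two unmixed cells meeting along $L$, or a cell whose intersection with $L$ is a single point yet is itself above or below without a partner of the required type. I would handle this by observing that any cell touching height $2$ and extending below must, for dimension reasons and because the only vertical edge summand is $((0,1),\text{itself at }a_{i0})$ paired against horizontal summands from the $B_j$'s, be one of the two listed types, and that the local picture along $L$ is forced once one knows the unique Minkowski decomposition of each face (guaranteed after Definition \ref{P:Cayley_trick}'s paragraph stating uniqueness).
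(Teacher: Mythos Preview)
Your proposal is correct and follows essentially the same approach as the paper's own proof: both argue that the closed type-I and type-II cells cover the segment $L$, distinguish the case where $L$ meets a cell interior (yielding a type-I parallelogram) from the case where $L$ coincides with a cell edge (unmixed triangle above, type-II cell below), and conclude that the total volume equals the length $u$ of $L$. You have simply made explicit the area computation that the paper leaves as ``easy to see,'' and you correctly flagged the boundary-point case analysis as the only point requiring care.
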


Generically, $u$ equals the total degree of every term in the implicit equation $\phi(x,y)$
wrt $x,y$ and the coefficient of $r$ in $f_2$.
By prop.~\ref{P:Degree_bounds}, the degree of $\Phi(x_0,x_1,x_2)$ is $u$.

In the following, we focus on segment $L$ and subsegments defined
by points $(b_{it},2)\in L, i\in\{0,1,2\}$.
Usually, we shall omit the ordinate, so the
corresponding segments will be denoted by $[b_{jt},b_{kl}]$.
We say that such a segment contributes to some coordinate $e_i$ when a
$i$-mixed cell of the mixed subdivision contains this segment.
Moreover,
\begin{itemize}
\item a type-I, $i$-mixed cell~$a_{i0}+(a_{j0},a_{jt})+(a_{k0},a_{kl})$
is identified with segment $[b_{jt},b_{kl}]$.
\item a type-II, $i$-mixed cell~$a_{i0}+(a_{jt},a_{js})+(a_{k0},a_{kl})$
is identified  with segment $[b_{jt},b_{js}]$ and the coordinate
$e_i$ to which it contributes.
\end{itemize}

We show that one needs to examine only
subsegments defined by endpoints $b_{iL},b_{iR}\in B_i$.
This is equivalent to saying that it suffices to consider mixed subdivisions
induced by linear liftings.

\begin{thm}\label{Tnoint}
Let $S$ be a mixed subdivision of $C_0+C_1+C_2$, where an internal point $b_i \in B_i$
defines a 0-dimensional face $(b_i,2)=(b_i,0)+(0,1)+(0,1) \in L$.
Then, the point of $N(\phi)$ obtained by $S$ cannot be a vertex because it is
a convex combination of points obtained by other mixed subdivisions
defined by points of $B_0,B_1,B_2$ which are either endpoints, or are used in
defining $S$ except from $(b_i,2)$.
\end{thm}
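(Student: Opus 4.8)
The plan is to localize the analysis to the segment $L$ with endpoints $(0,2),(u,2)$ and to show that any mixed subdivision $S$ using an internal point $b_i\in B_i$ as a $0$-face on $L$ can be ``relaxed'' at that point into a one-parameter family of mixed subdivisions whose resultant monomials straddle the monomial coming from $S$. First I would recall, via Lemma~\ref{Ltypes} and Lemma~\ref{Lresdeg}, that the multidegree of the $\omega$-extreme monomial of $\RR$ in the coefficients $c_{i,a_{i0}}$ is read off entirely from how the type-I and type-II cells tile $L$: the volume contributed to coordinate $e_i$ is the total length of the subsegments of $L$ assigned to $i$-mixed cells. So the combinatorial datum attached to $S$ that matters for the vertex question is a partition of $[0,u]$ into subintervals with endpoints among the used $b_{jt}$'s, each labeled by the index $i\in\{0,1,2\}$ of the mixed cell sitting above or below it, together with the volumes of the corner (non-$L$) cells. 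A vertex of $N(\phi)$ is an extremal point of the set of such multidegree vectors.

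The heart of the argument is a perturbation at the offending point $(b_i,2)$. Since $b_i$ is an internal point of $B_i$, in the subdivision $S$ it is the common endpoint of two adjacent subsegments of $L$, say $[\cdot,b_i]$ and $[b_i,\cdot]$, each carried by a mixed cell; because $(b_i,2)=(b_i,0)+(0,1)+(0,1)$, the point $b_i$ enters only through edge summands of the form $(a_{i0},a_{ib_i})$ or $(a_{ib_i},a_{i\cdot})$. I would argue that one can slide $b_i$ toward either neighbouring used point of $B_i$ on $L$ — equivalently, tilt the lifting on $A_i$ so that $b_i$ is no longer on the lower hull of the relevant Minkowski face — obtaining two mixed subdivisions $S^{-}$ and $S^{+}$ that agree with $S$ everywhere except that the pair of subsegments meeting at $b_i$ is replaced by one longer subsegment carried by one of the two original cells (the other cell degenerating). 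The key linearity observation is that the multidegree vector depends affinely on the position of $b_i$ along $L$: shifting $b_i$ by $\delta$ moves volume $\delta$ from one coordinate $e_k$ to another $e_\ell$ (with the corner-cell volumes changing linearly as well, as noted after Lemma~\ref{Ltypes} for type-II cells and their companion unmixed triangles). Hence the point of $N(\phi)$ attached to $S$ lies on the segment joining the points attached to $S^-$ and $S^+$, strictly between them, and therefore cannot be a vertex. Iterating removes all internal points, leaving only subdivisions defined by the endpoints $b_{iL},b_{iR}$.

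The main obstacle I anticipate is bookkeeping the global consistency of the perturbed subdivisions: moving $b_i$ on $L$ forces compatible changes in the neighbouring cells (the companion unmixed triangles of type-II cells, and possibly a cascade if the neighbour of $b_i$ on $L$ is itself another internal point of $B_j$), and one must check that $S^{-},S^{+}$ remain genuine mixed subdivisions that intersect properly as Minkowski sums in the sense of Definition following \cite{Santos05}, rather than merely locally modified tilings. I would handle this by doing the perturbation via the lifting function: start from any $\omega$ inducing $S$, and define $\omega^{\pm}(\lambda)$ by adding $\pm\lambda$ times a linear functional supported on $A_i$ that is zero at the two neighbouring used points of $b_i$ and positive (resp.\ negative) at $b_i$; for small generic $\lambda$ this yields a regular tight mixed subdivision, and by construction $b_i$ is no longer a vertex of the induced cell on $L$ while all other used points of $S$ are preserved. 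Linearity of the volume pairing with $\omega$ then gives the convex-combination statement directly, and taking $\lambda\to 0^{\pm}$ shows the two flanking subdivisions are the desired ones. The remaining routine check is that the coordinate-wise exchange of volume is exactly affine in $\lambda$, which follows because each affected cell is a parallelogram or triangle whose area is a linear function of the displaced vertex.
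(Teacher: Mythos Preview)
Your reduction to the segment $L$ and the observation that the exponent vector is determined by how the type-I/II cells partition $[0,u]$ is exactly right, and matches the paper's setup. The gap is in the perturbation step: your two-point ``straddle'' $S^{-},S^{+}$ only works in the easiest sub-case and breaks down in the others.

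Concretely, take the I--I situation: the two cells meeting at $(b_i,2)$ are a $k$-mixed type-I cell on $[b_{jl},b_i]$ and a $j$-mixed type-I cell on $[b_i,b_{kt}]$, both using the edge $(a_{i0},(b_i,0))$ from $A_i$. You propose to replace $b_i$ by $b_{iL}$ or $b_{iR}$, giving two flanking subdivisions with the boundary slid left or right. This is fine when $b_{jl}<b_{iL}<b_i<b_{iR}<b_{kt}$, and then the multidegree does move affinely along the segment between the two new points. But nothing forces $b_{iL},b_{iR}$ to lie inside $[b_{jl},b_{kt}]$; if, say, $b_{iR}>b_{kt}$, the cell $a_{j0}+(a_{i0},a_{iR})+(a_{k0},a_{kt})$ would have to span $[b_{iR},b_{kt}]$ with negative length, which is nonsense, and the modification is no longer local. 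In those cases the paper has to bring in a \emph{third} subdivision (and keep the original cells on the overhanging subsegment $[b_{kt},b_{iR}]$ intact), and the enclosing set is a triangle, not a segment. The same phenomenon occurs in the I--II case when $b_{iR}>b_{kt}$.

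Your lifting argument does not rescue this. A linear functional on $A_i\subset\R^2$ that vanishes at two points on the $t$-axis vanishes on the whole axis, hence also at $(b_i,0)$; so the perturbation you describe does not single out $b_i$. If instead you simply raise the lift of the single point $(b_i,0)$, you get only \emph{one} new subdivision (lowering it leaves $S$ unchanged), so there is no pair $S^{\pm}$ to straddle with. And even granting some way to produce two flanking subdivisions, the examples above show that the point of $S$ need not lie on the segment between them; three points are genuinely needed.

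The paper's proof proceeds by a case split on the types of the two cells adjacent at $(b_i,2)$ --- II--II, I--I, and I--II --- and within I--I and I--II a further split on the relative order of $b_{iL},b_{iR}$ versus the $L$-neighbours $b_{jl},b_{kt}$. In each sub-case it writes down two or three explicit subdivisions (modifying $S$ only on the relevant subsegment of $L$) and checks via a $3\times3$ orientation determinant that the original point lies in their convex hull. Your first sub-case (and the II--II lemma) coincide with your sliding picture; the remaining sub-cases are where the extra work lies.
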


The theorem is established by lemmas~\ref{L:II-II}, \ref{L:I-I} and~\ref{L:I-II}.
We shall construct mixed subdivisions that yield points in the $e_ke_j$-plane
whose convex hull contains the initial point.
All cells of the original subdivision which are not mentioned are taken to be fixed,
therefore we can ignore their contribution to $e_k,e_j$.
All convex combinations in these lemmas are decided by the $3\times 3$
orientation determinant (cf.\ expression~(\ref{EQccw2a})).

\begin{lem}[II-II]\label{L:II-II}
Consider the setting of theorem~\ref{Tnoint} and suppose that $(b_i,2)$ is
a vertex of two adjacent type~II cells.
Then, the theorem follows.
\begin{proof}
If both cells are $j$-mixed, then the same point in $e_ke_j$-plane is obtained
by one $j$-mixed cell equal to their union, $\{i,j,k\}=\{0,1,2\}$.
If the cells are $j$- and $k$-mixed, then there are two mixed subdivisions
yielding points in the $e_ke_j$-plane, which define a segment that contains
the initial point.
The subdivisions have one $j$-mixed or one $k$-mixed cell respectively,
intersecting the entire subsegment.
\end{proof}
\end{lem}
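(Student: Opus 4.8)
The plan is to pin down the combinatorics of the two type-II cells meeting at the internal point, and then either merge them or re-decompose their union. Write $\{i,j,k\}=\{0,1,2\}$. Since the Minkowski-sum description of a face of a mixed subdivision is unique, every cell containing the $0$-dimensional face $(b_i,2)=(b_i,0)+(0,1)+(0,1)$ must restrict to this description there; comparing with Lemma~\ref{Ltypes}, this forces each of the two type-II cells, say $R_1$ and $R_2$, to have its horizontal edge summand taken from $B_i$, its vertex summand equal to $a_{q0}$ for some $q\in\{j,k\}$, and its slanted edge summand $(a_{p0},a_{ph})$ taken from $B_p$, where $p$ is the remaining element of $\{j,k\}$. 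Both cells lie below $L$, with top edges $[b_{it_1},b_i]\times\{2\}$ and $[b_i,b_{im_2}]\times\{2\}$; these must sit on opposite sides of $(b_i,2)$, since otherwise the cell interiors would overlap just below $L$. Adjacency then means $R_1$ and $R_2$ share the edge of $S$ running downward from $(b_i,2)$, which forces their slanted summands to be parallel of equal length, i.e.\ the corresponding points of $B_p$ coincide.

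First I would dispatch the case in which $R_1$ and $R_2$ are $q$-mixed for the \emph{same} $q$. Then $R_1\cup R_2$ is again a parallelogram, namely a single $q$-mixed type-II cell $R'$ whose horizontal summand is the segment $[b_{it_1},b_{im_2}]\subseteq B_i$, and the two non-mixed triangles with apex $(0,3)$ sitting above $R_1$ and $R_2$ glue into one non-mixed triangle above $R'$. Carrying out this replacement gives a mixed subdivision $S'$ whose set of used points is that of $S$ with $(b_i,2)$ removed; since $\mathrm{Vol}(R')=\mathrm{Vol}(R_1)+\mathrm{Vol}(R_2)$ and non-mixed cells do not contribute to the exponents, Proposition~\ref{P:Sturmf_extreme} shows that $S'$ yields exactly the same point of $N(\phi)$ as $S$. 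Thus the point obtained from $S$ is already obtained from a mixed subdivision that does not involve the internal point $(b_i,2)$, which is the assertion of Theorem~\ref{Tnoint} in this case.

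Next I would treat the case where $R_1$ is $j$-mixed and $R_2$ is $k$-mixed (up to swapping $j$ and $k$). Here the matched slanted summands make $R_1\cup R_2$ a parallelogram with one pair of edges horizontal of length $b_{im_2}-b_{it_1}$ and the other pair a translate of the segment $(a_{p0},a_{ph})$; crucially, this parallelogram admits \emph{two} Minkowski cell structures, one as a $j$-mixed type-II cell (horizontal summand $[b_{it_1},b_{im_2}]\subseteq B_i$, slanted summand from $B_k$) and one as a $k$-mixed type-II cell (same horizontal summand, slanted summand from $B_j$). Inserting the first, respectively the second, into $S$ and in each case merging the two triangles above into one, I obtain mixed subdivisions $S_j$, respectively $S_k$, neither of which uses $(b_i,2)$. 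None of $R_1,R_2$, nor either replacement cell, is $i$-mixed, so the $e_i$-coordinate is unaffected and I only need to track $(e_j,e_k)$. A short volume count, using $\mathrm{Vol}(R_1)=b_i-b_{it_1}$, $\mathrm{Vol}(R_2)=b_{im_2}-b_i$ and that the merged cell has volume $b_{im_2}-b_{it_1}$, places the point $p$ obtained from $S$ at the origin of local $(e_j,e_k)$-coordinates, the point from $S_j$ at $(b_{im_2}-b_i)(1,-1)$ and the point from $S_k$ at $(b_i-b_{it_1})(-1,1)$. Because $b_{it_1}<b_i<b_{im_2}$, these three points are collinear with $p$ strictly between the other two, so $p$ is a proper convex combination of the points obtained from $S_j$ and $S_k$; the orientation determinant in expression~(\ref{EQccw2a}) formalizes this, and Theorem~\ref{Tnoint} follows.

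The volume bookkeeping and the determinant check are routine; the delicate point I expect is the verification that the merge in the first case and the re-decomposition in the second actually produce \emph{honest} mixed subdivisions — in particular that the non-mixed triangles above $L$ recombine consistently so that all cells still intersect properly as Minkowski sums — together with the argument that adjacency of the two cells really does force their slanted summands to coincide. Once these are in place, the rest is the bookkeeping above plus Proposition~\ref{P:Sturmf_extreme} and the surjection of Corollary~\ref{C:surjection-mxd-extreme}.
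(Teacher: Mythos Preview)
Your proposal is correct and follows essentially the same approach as the paper's proof: merge the two cells when they are mixed of the same type, and in the mixed-type case replace their union by a single $j$-mixed or a single $k$-mixed cell covering the whole subsegment, then observe that the original point lies on the segment between the two resulting points. You have simply filled in the combinatorial details (why the horizontal summand must come from $B_i$, why the slanted summands match, and the explicit volume/coordinate bookkeeping) that the paper leaves implicit, and you correctly flag as the delicate step the verification that the replacements yield honest mixed subdivisions --- a point the paper also leaves to the reader.
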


\begin{lem}[I-I]\label{L:I-I}
Consider the setting of theorem~\ref{Tnoint} and suppose that $(b_i,2)$ is
a vertex of two adjacent type~I cells.
Wlog, these are $k$- and $j$-mixed cells, $\{i,j,k\}=\{0,1,2\}$.
Then, the theorem follows.
\begin{proof}
Let $[b_{jl},b_i], [b_i,b_{kt}]$ be the subsegments defined on $L$ by
the two mixed cells, and let $\alpha,\beta$ be their respective lengths.
Since $b_i$ is internal, $b_{iR}$ lies to its right-hand side and $b_{iL}$ lies to its left-hand side.

Case $b_{iR} < b_{kt}$ and $b_{iL} > b_{jl}$. Let $\gamma=b_i-b_{iL}$ and $\delta=b_{iR}-b_i$.
The initial point $(\alpha,\beta)$ shall be enclosed by two points.
The mixed subdivision with type-I cells corresponding to  $[b_{jl},b_{iR}]$ and
$[b_{iR},b_{kt}]$ yields point $(\alpha + \delta, \beta - \delta)$.
The subdivision with type-I cells corresponding to
$[b_{jl},b_{iL}], [b_{iL},b_{kt}]$ yields point $(\alpha - \gamma, \beta + \gamma)$.

Case $b_{iR} < b_{kt}$ and $b_{iL} < b_{jl}$. Let $\gamma=b_{jl}-b_{iL}$ and
$\delta=b_{iR}-b_i < \beta$. The initial point is
$(\alpha+v_k,\beta+v_j)$, where $v_k,v_j\ge 0$ is the contribution
to $e_k,e_j$ respectively from subsegment $[b_{iL},b_{jl}]$, and
$v_k+v_j \le \gamma$.
Now consider~3 mixed subdivisions on $[b_{iL},b_{kt}]$:
The first containing the type-II $k$-mixed cell $[b_{iL},b_{iR}]$
and the type-I $j$-mixed cell $[b_{iR},b_{kt}]$
gives point $(\alpha+\gamma+\delta, \beta - \delta)$.
The second containing the type-I $j$-mixed cell $[b_{iL},b_{kt}]$
gives point $(0, \alpha+\beta+\gamma)$.
The third containing the type-I $i$-mixed cell $[b_{jl},b_{kt}]$
and the initial cells in $[b_{iL},b_{jl}]$, gives $(v_k,v_j)$.

Case $b_{iR} > b_{kt}$ and $b_{iL} > b_{jl}$. Let $\gamma=b_i-b_{iL}<\alpha$ and
$\delta=b_{iR}-b_{kt}$. The initial point is
$(\alpha+v_k,\beta+v_j)$, where $v_k,v_j\ge 0$ is the contribution
to $e_k,e_j$ respectively from $[b_{kt},b_{iR}]$, and
$v_k+v_j \le  \delta$.
Now consider~3 mixed subdivisions on $[b_{jl},b_{iR}]$:
The first containing the type-I $i$-mixed cell $[b_{jl},b_{kt}]$
and the initial cells in $[b_{kt},b_{iR}]$,
gives point $(v_k,v_j)$.
The second containing the type-I $k$-mixed cell $[b_{jl},b_{iR}]$,
gives point $(\alpha+\beta+\delta,0)$.
The third containing the type-I $k$-mixed cell $[b_{jl},b_{iL}]$
and the type-II $j$-mixed cell $[b_{iL},b_{iR}]$, gives $(\alpha - \gamma,\beta+\gamma+\delta)$.

Case $b_{iR} > b_{kt}$ and $b_{iL} < b_{jl}$. Let $\gamma=b_{jl}-b_{iL}$ and
$\delta=b_{iR}-b_{kt}$. The initial point is
$(\alpha+v_k+u_k,\beta+v_j+u_j)$, where $v_k,v_j\ge 0$ is the contribution
to $e_k,e_j$ respectively from $[b_{kt},b_{iR}]$, and
$v_k+v_j \le  \delta$. Similarly, $u_k,u_j\ge 0$ is the contribution
to $e_k,e_j$ respectively from $[b_{iL},b_{jl}]$, and
$u_k+u_j \le  \gamma$.
Now consider~3 mixed subdivisions on $[b_{iL},b_{iR}]$:
The first containing the type-II $k$-mixed cell $[b_{iL},b_{iR}]$,
gives point $(\alpha + \beta+\gamma+\delta,0)$.
The second containing the type-II $j$-mixed cell $[b_{iL},b_{iR}]$,
gives point $(0, \alpha + \beta+\gamma+\delta)$.
The third containing the type-I $i$-mixed cell $[b_{jl},b_{kt}]$
and the initial cells in $[b_{iL},b_{jl}]$ and $[b_{kt},b_{iR}]$,
 gives point $(v_k + u_k, v_j + u_j)$.
\end{proof}
\end{lem}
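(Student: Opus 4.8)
I would keep to the same template as the proofs of Lemmas~\ref{L:II-II} and~\ref{L:I-II} and reduce everything to the $e_ke_j$-plane. By Proposition~\ref{P:Sturmf_extreme} the vertex of $N(\phi)$ produced by a mixed subdivision records, as the power $e_i$ of $x_i$, the total volume of the $i$-mixed cells, and over segment $L$ (see Lemma~\ref{Lresdeg}) this volume is simply the total length of the subsegments of $L$ carried by $i$-mixed cells. All cells of $S$ except the two type-I cells meeting at $(b_i,2)$ — and, when needed, the cells of $S$ over the subsegments linking $(b_i,2)$ to the nearest endpoints of $B_i$ — will be left untouched, so it suffices to argue inside the $(e_k,e_j)$-plane, where $S$ contributes the length $\alpha=b_i-b_{jl}$ to $e_k$ (from the $k$-mixed cell on $[b_{jl},b_i]$) and $\beta=b_{kt}-b_i$ to $e_j$ (from the $j$-mixed cell on $[b_i,b_{kt}]$), plus the fixed contributions of whatever flanking subsegments are involved. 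Internality forces $b_{iL}<b_i<b_{iR}$, while adjacency of the two cells forces $b_{jl}<b_i<b_{kt}$; the argument then splits into four cases according to whether $b_{iR}$ lies left or right of $b_{kt}$ and whether $b_{iL}$ lies left or right of $b_{jl}$, with equalities being degenerate limits of these.

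The plan in each case is to exhibit two or three mixed subdivisions that agree with $S$ outside a slab over one subinterval of $L$, obtained by a \emph{linear} relift that either moves the shared apex from $(b_i,2)$ to $(b_{iR},2)$ or to $(b_{iL},2)$, or merges the two cells into a single type-I $i$-mixed cell spanning $[b_{jl},b_{kt}]$. When both $b_{iR}$ and $b_{iL}$ fall inside $[b_{jl},b_{kt}]$, sliding the apex to $b_{iR}$ yields the point $(\alpha+\delta,\beta-\delta)$ with $\delta=b_{iR}-b_i$, sliding it to $b_{iL}$ yields $(\alpha-\gamma,\beta+\gamma)$ with $\gamma=b_i-b_{iL}$, and the original $(\alpha,\beta)$ is their convex combination with weights $\gamma/(\gamma+\delta)$ and $\delta/(\gamma+\delta)$. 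When $b_{iR}>b_{kt}$ or $b_{iL}<b_{jl}$, dragging the apex to the endpoint forces the flanking subsegment $[b_{kt},b_{iR}]$ or $[b_{iL},b_{jl}]$ to be recovered by other cells, so I add a third subdivision that puts the type-I $i$-mixed cell on $[b_{jl},b_{kt}]$ and keeps the original cells over the flanking subsegments; the three resulting points span a triangle, and one checks that the initial point $(\alpha+v_k+u_k,\beta+v_j+u_j)$ — where $v_k,v_j\ge0$ with $v_k+v_j\le\delta$, and $u_k,u_j\ge0$ with $u_k+u_j\le\gamma$, are the fixed contributions of the flanking subsegments, since a subsegment of length $s$ can add at most $s$ in total to $e_k+e_j$ — lies inside it. In all cases every constructed subdivision is built only from endpoints $b_{iL},b_{iR}$ of the $B_i$ and from points already used by $S$ other than $b_i$, which is exactly what Theorem~\ref{Tnoint} requires.

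Two points must be checked and that is where the real work lies. First, each modified family genuinely is a mixed subdivision: the new cells must intersect properly with one another and with the untouched part of $S$; this holds because every new cell is one of the standard type-I/type-II shapes of Lemma~\ref{Ltypes} (translates of copies of $C_0,C_1,C_2$), confined to a slab over an interval whose endpoints are already vertices of $S$, with the non-mixed triangle above each of them again having apex $(0,3)$, so the relift is truly linear and compatible with the rest. Second, the convex-position claim in each case comes down to the sign of a single $3\times3$ orientation determinant (cf.\ expression~(\ref{EQccw2a})), which one evaluates using $\gamma,\delta\ge0$ and the bounds $v_k+v_j\le\delta$, $u_k+u_j\le\gamma$. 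The hard part will not be any individual determinant but the case bookkeeping: tracking which of $b_{iL},b_{iR}$ is interior to $[b_{jl},b_{kt}]$, hence which subdivisions form a two-point "sandwich" and which need the third merged cell, and making sure the flanking subsegments are transported unchanged so that their contribution to $e_k,e_j$ really is a fixed additive constant.
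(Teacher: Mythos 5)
Your proposal follows the paper's proof essentially verbatim in structure: the same reduction to the $(e_k,e_j)$-plane with initial point $(\alpha,\beta)$ plus fixed flanking contributions, the same four-way case split on the positions of $b_{iL},b_{iR}$ relative to $b_{jl},b_{kt}$, the two-point sandwich via apex-sliding when both endpoints are interior, and the three-point triangle (using the merged type-I $i$-mixed cell on $[b_{jl},b_{kt}]$ together with the bounds $v_k+v_j\le\delta$, $u_k+u_j\le\gamma$) in the remaining cases. The only difference is one of detail rather than of method — the paper explicitly lists the replacement cells (including the type-II cells over $[b_{iL},b_{iR}]$ and the single enlarged type-I $j$- or $k$-mixed cells) and the resulting coordinates in each case, which your plan defers to the orientation-determinant check.
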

\begin{figure}[ht]
\psfrag{S}{\footnotesize $S$}
\psfrag{S1}{\footnotesize $S_1$}
\psfrag{S2}{\footnotesize $S_2$}
\psfrag{S3}{\footnotesize $S_3$}
\psfrag{a+g+d}{\footnotesize $\alpha+\gamma+\delta$}
\psfrag{u}{\footnotesize $\alpha+\beta+\gamma $}
\psfrag{b-d}{\footnotesize $\beta-\delta$}
\psfrag{vk}{\footnotesize $v_k$}
\psfrag{vj}{\footnotesize $v_j$}
\psfrag{vk,vj}{\footnotesize $v_k,v_j$}
\psfrag{a}{\footnotesize $\alpha$}
\psfrag{b}{\footnotesize $\beta$}
\psfrag{g}{\footnotesize $\gamma$}
\psfrag{d}{\footnotesize $\delta$}
\psfrag{b+g}{\footnotesize $\beta+\gamma$}
\psfrag{a+g}{\footnotesize $\alpha+\gamma$}
\psfrag{ek}{\footnotesize $e_k$}
\psfrag{ej}{\footnotesize $e_j$}
\psfrag{ei}{\footnotesize $e_i$}
\psfrag{biL}{\footnotesize $b_{iL}$}
\psfrag{bjl}{\footnotesize $b_{jl}$}
\psfrag{bi}{\footnotesize $b_i$}
\psfrag{biR}{\footnotesize $b_{iR}$}
\psfrag{bkt}{\footnotesize $b_{kt}$}
\centering\includegraphics[width=0.55\textwidth]{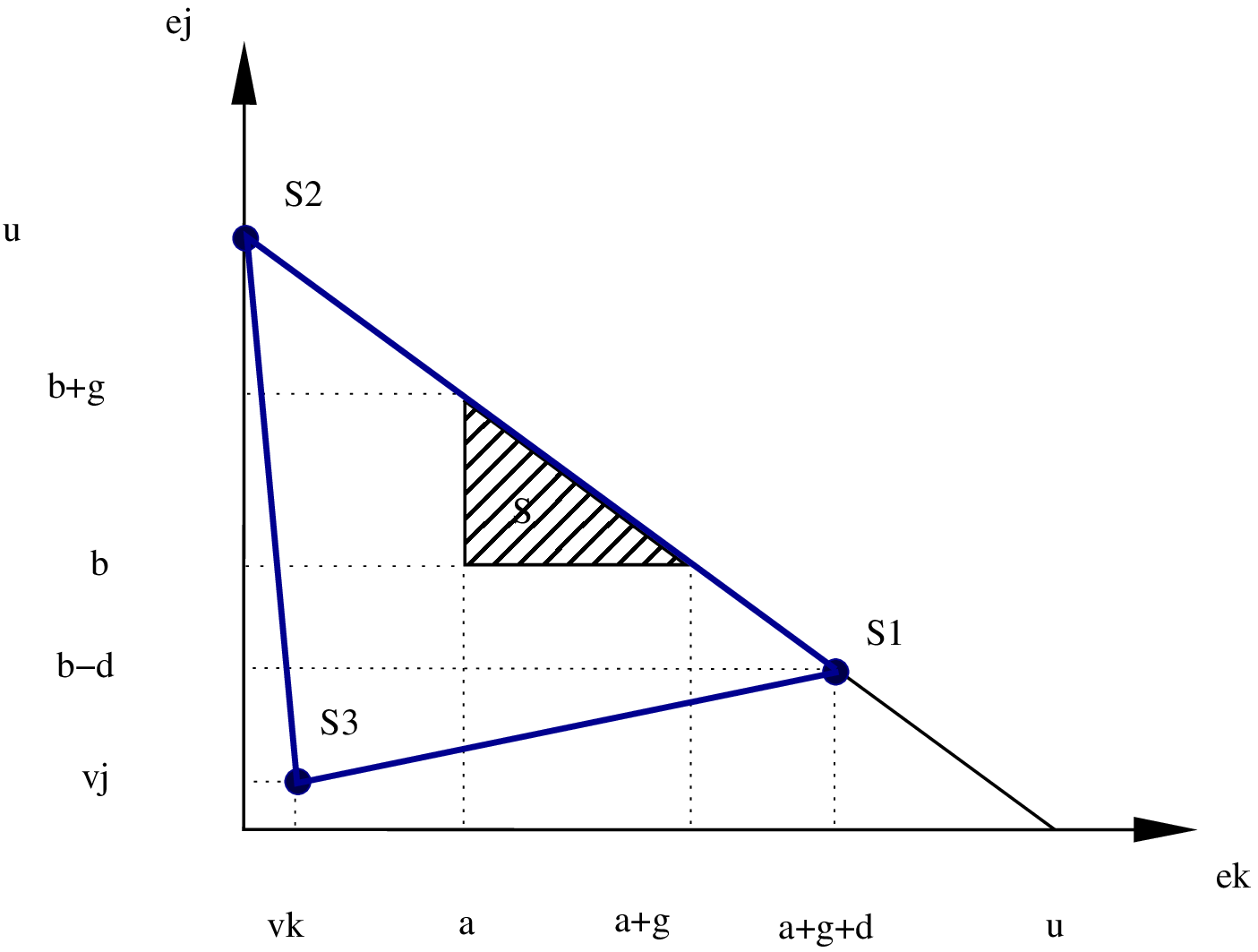}
\hskip3em
\includegraphics[width=0.3\textwidth]{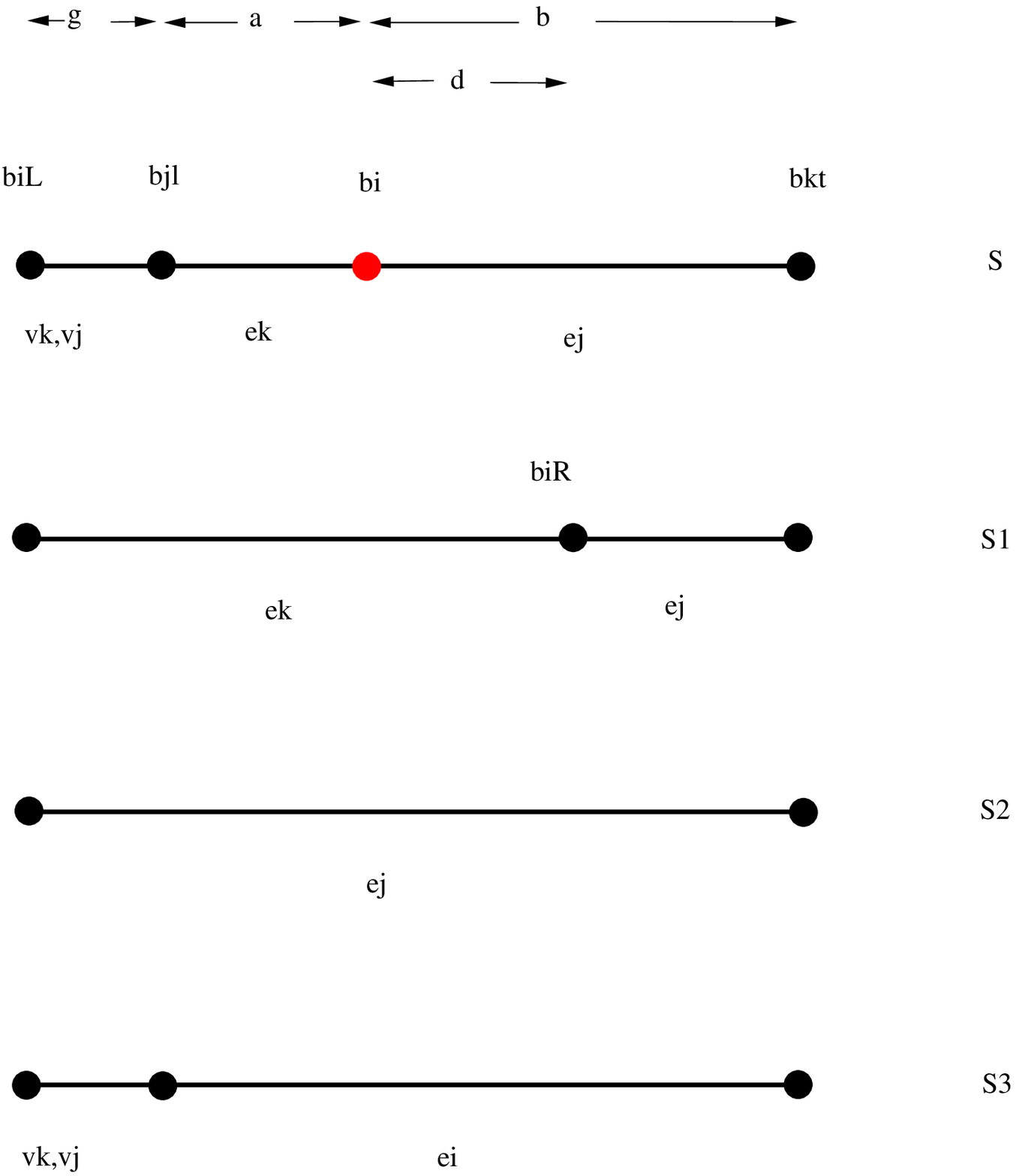}
\caption{The three points that enclose the point given by $S$
and the corresponding mixed subdivisions for the second case of Lemma~\protect\ref{L:I-I}.
\label{Fig:LI-I} }
\end{figure}

\begin{lem}[I-II]\label{L:I-II}
Consider the setting of theorem~\ref{Tnoint} and suppose that $(b_i,2)$ is
a vertex of two adjacent type~II and~I cells.
Wlog, these are $k$- and $j$-mixed cells, $\{i,j,k\}=\{0,1,2\}$.
Then, the theorem follows.
\begin{proof}
Let $[b_{il},b_{i}], [b_i,b_{kt}]$ be the subsegments defined on $L$ by
the two mixed cells, and let $\alpha,\beta$ be their respective lengths.
Since $b_i$ is internal, $b_{iR}$ lies to its right-hand side.
Moreover, the initial $k$-mixed cell implies the existence of 1-dimensional face
$(b_{i},2)+a_{k0}+E_{jl}$, for some edge $E_{jl}=(a_{j0},a_{jl}) \subset B_j$.
The initial $j$-mixed cell implies the existence of 1-face
$(b_{i},2)+a_{j0}+E_{kt}$, for edge $E_{kt}=(a_{k0},a_{kt})\subset B_k$.
The second 1-face cannot be to the left of the first one, hence $b_{jl}\le b_{kt}$.
Hence, $b_{jL}\le b_{kt}$.

Case $b_{iR}\le b_{kt}$.
The initial point $(\alpha,\beta)$ shall be enclosed by two points.
The mixed subdivision with type-I cell $[b_{il},b_{kt}]$
yields point $(0,\alpha + \beta)$.
The subdivision with type-II and type-I cells corresponding to
$[b_{il},b_{iR}], [b_{iR},b_{kt}]$ sets $e_k> \alpha, e_j< \beta$, where
$e_k+e_j=  \alpha + \beta$.

Case $b_{iR} > b_{kt}$ and $b_{jL} > b_{il}$.
Consider subsegment $[b_{il},b_{iR}]$:
the initial point is $(\alpha+v_k,\beta+v_j)$, where $v_k,v_j\ge 0$ is the contribution
to $e_k,e_j$ respectively from subsegment $[b_{kt},b_{iR}]$, and
$v_k+v_j\le \gamma= b_{iR}-b_{kt}$.
Now consider~3 mixed subdivisions on $[b_{il},b_{iR}]$:
One $k$-mixed cell $[b_{il},b_{iR}]$ gives point $(\alpha+\beta+\gamma,0)$.
One $j$-mixed cell $[b_{il},b_{kt}]$
and the initial cells in $[b_{kt},b_{iR}]$ give $(v_k,\alpha+\beta+v_j)$.
One $k$-mixed cell $[b_{il},b_{jL}]$, one $i$-mixed cell $[b_{jL},b_{kt}]$
and the initial cells in $[b_{kt},b_{iR}]$ give $(e_k+v_k,v_j)$, for some
$e_k\le \alpha+\beta$.

Case $b_{iR} > b_{kt}$ and $b_{jL} \le b_{il}$.
Consider subsegment $[b_{jL},b_{iR}]$:
the initial point is $(\alpha+u_k+v_k,\beta+u_j+v_j)$, where $v_k,v_j$ are as above,
$u_k,u_j\ge 0$ correspond to subsegment $[b_{jL},b_{il}]$,
and $u_k+ u_j\le \delta = b_{il}-b_{jL}$.
Now consider~3 mixed subdivisions on $[b_{jL},b_{iR}]$:
One $k$-mixed cell $[b_{jL},b_{iR}]$ gives point $(\alpha+\beta+\gamma+\delta,0)$.
One $j$-mixed cell $[b_{il},b_{iR}]$
and the initial cells in $[b_{jL},b_{il}]$ give $(u_k,\alpha+\beta+\gamma+u_j)$.
One $i$-mixed cell $[b_{jL},b_{kt}]$,
and the initial cells in $[b_{kt},b_{iR}]$ give $(v_k,v_j)$.
\end{proof}
\end{lem}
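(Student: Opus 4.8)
The plan is to prove this lemma by the mechanism already used for Lemmas~\ref{L:II-II} and~\ref{L:I-I}: keep every cell of $S$ not incident to the interior $0$-face $(b_i,2)$ fixed, so that their contribution to the exponents $e_k,e_j$ is common to all subdivisions and may be ignored, and then exhibit a short list of alternative mixed subdivisions, differing from $S$ only near $(b_i,2)$, whose contributions to $(e_k,e_j)$ have the point produced by $S$ in their convex hull. Each convexity claim reduces, as in expression~(\ref{EQccw2a}), to the sign of one $3\times3$ orientation determinant, so the entire argument takes place in the two-dimensional $e_ke_j$-plane.

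First I would record the combinatorial constraints forced by the hypotheses. Say the type-II cell at $(b_i,2)$ is $k$-mixed and the adjacent type-I cell is $j$-mixed, and let $[b_{il},b_i]$ and $[b_i,b_{kt}]$ be the subsegments of $L$ they cut out, of lengths $\alpha$ and $\beta$. By Lemma~\ref{Ltypes}, the $j$-mixed type-I cell forces the $1$-face $(b_i,2)+a_{j0}+(a_{k0},a_{kt})$, and the $k$-mixed type-II cell forces a $1$-face $(b_i,2)+a_{k0}+(a_{j0},a_{jl})$ for some edge of $B_j$; proper intersection of the two cells along the vertical edge over $(b_i,2)$ means the latter $1$-face does not lie to the left of the former, i.e.\ $b_{jl}\le b_{kt}$, hence $b_{jL}\le b_{kt}$. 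Since $b_i$ is internal, $b_{iR}$ lies strictly to its right. These yield the case split $b_{iR}\le b_{kt}$; $b_{iR}>b_{kt}$ with $b_{jL}>b_{il}$; and $b_{iR}>b_{kt}$ with $b_{jL}\le b_{il}$.

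Next, in each case I would write down the replacement subdivisions explicitly. When $b_{iR}\le b_{kt}$, two suffice: the single type-I $j$-mixed cell on $[b_{il},b_{kt}]$ contributes $(0,\alpha+\beta)$, while the type-II/type-I pair on $[b_{il},b_{iR}]$ and $[b_{iR},b_{kt}]$ contributes a point with $e_k>\alpha$, $e_j<\beta$ and $e_k+e_j=\alpha+\beta$, so the $S$-point $(\alpha,\beta)$ lies between them on the segment $e_k+e_j=\alpha+\beta$. In the other two cases, three replacements are needed: one with a single $k$-mixed cell over the whole manipulated subsegment (which pushes everything into $e_k$); one with a $j$-mixed cell over the portion up to $b_{kt}$ together with the original cells beyond $b_{kt}$ (and, in the last case, the original cells in $[b_{jL},b_{il}]$); and one with an $i$-mixed type-I cell over $[b_{jL},b_{kt}]$ (resp.\ over $[b_{il},b_{kt}]$) together with all untouched boundary cells, which isolates only the ``hidden'' contributions $v_k,v_j\ge0$ from $[b_{kt},b_{iR}]$, with $v_k+v_j\le b_{iR}-b_{kt}$, and, in the last case, $u_k,u_j\ge0$ from $[b_{jL},b_{il}]$, with $u_k+u_j\le b_{il}-b_{jL}$. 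Writing the $S$-point as $(\alpha+v_k+u_k,\,\beta+v_j+u_j)$, one checks it lies inside the triangle spanned by the three computed points.

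The main obstacle is the bookkeeping, not any single computation: I must check that each proposed replacement is a genuine mixed subdivision of $C_0+C_1+C_2$ (its new cells tile the affected strip and meet the surrounding fixed cells properly), that the cell types and mixedness are consistent with Lemma~\ref{Ltypes}, and that the orientation determinant keeps the correct sign even in degenerate sub-cases where two of the three computed points coincide or become collinear, so that the $S$-point still lies in the (then lower-dimensional) convex hull. The one genuinely new ingredient compared with the I--I and II--II cases is the inequality $b_{jL}\le b_{kt}$, which I would need to use with care to guarantee that the $i$-mixed cell over $[b_{jL},b_{kt}]$ in the third replacement is well defined and of type~I. Once every case is settled, Theorem~\ref{Tnoint} follows by combining this lemma with Lemmas~\ref{L:II-II} and~\ref{L:I-I}, which together exhaust the ways two cells of $S$ can meet at an interior $0$-face on $L$.
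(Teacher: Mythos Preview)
Your plan is correct and matches the paper's proof almost exactly: the same derived constraint $b_{jL}\le b_{kt}$, the same three-way case split on the positions of $b_{iR}$ and $b_{jL}$, and the same mechanism of two or three replacement subdivisions per case. One minor correction: in the case $b_{iR}>b_{kt}$, $b_{jL}>b_{il}$, your third replacement cannot yield the bare point $(v_k,v_j)$, because the subsegment $[b_{il},b_{jL}]$ must still be covered by some cell---the paper fills it with a type-I $k$-mixed cell, giving $(e_k+v_k,v_j)$ with $e_k=b_{jL}-b_{il}$, and this is exactly where the inequality $b_{jL}\le b_{kt}$ (hence $e_k\le\alpha+\beta$) is needed to close the convex-hull check.
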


In the next lemma and corollary, we shall determine certain points in $N(\Phi)$.
We shall later see that among these points lie the vertices of $N(\Phi)$ and,
therefore, the vertices of $N(\phi)$.
Recall that MV$_i = $MV$_{\R^2}(A_j,A_k)$, where $\{i,j,k\}=\{0,1,2\}$.

\begin{lem}\label{Lsame_corners}
Let $b_{tL}= \min\{b_{iL},b_{jL}\},~b_{mR}= \max\{b_{iR},b_{jR}\}$ and
$\Delta = [b_{tL},b_{mR}]$,
for $i\ne j\in\{0,1,2\}$ and $t,m\in\{i,j\}$ not necessarily distinct.
Set $e_{\lambda}= |\Delta|$, where $\lambda\in\{0,1,2\}-\{i,j\}$, and $e_i=e_j=0$.
Then, add $b_{tL}$ to $e_{\tau}$, where $\tau\in\{i,j\}-\{t\}$,
and add $u-b_{mR}$ to $e_{\mu}$, where $\mu\in\{i,j\}-\{ m \}$.
Then, $(e_0,e_1)$ is a vertex of $N(\phi)$.

\begin{proof}
Clearly $\Delta=$CH$(B_i\cup B_j) \subseteq[0,u]$, so MV$_{\lambda}=|\Delta|$.
It is possible to construct a mixed subdivision that yields the implicit vertex.
If $t \neq m$, then the mixed subdivision contains a type-I mixed cell
$(a_{t0},a_{tL})+(a_{m0},a_{mR_m})+a_{\lambda 0}$ which
intersects segment $L$ at subsegment $[b_{tL},b_{mR}]$.
This contributes $\mbox{\rm MV}_{\lambda}=b_{mR}-b_{tL}$ to $e_{\lambda}$.
There is a type-I cell $(a_{\lambda 0},a_{\lambda L})+(a_{t0},a_{tL})+a_{\tau 0}$ which
intersects $L$ at subsegment $[0,b_{tL}]$.
This contributes $b_{tL}$ to $e_{\tau}$.
Similarly, we assign the area $u-b_{mR}$ of the type-I cell
$(a_{\lambda 0},a_{\lambda R})+(a_{m0},a_{mR_m})+a_{\mu 0}$ to $e_{\mu}$.

If $t=m$, then  $\Delta$ is an edge of one of the initial Newton segments,
say $B_t$, and $\Delta=[b_{tL},b_{tR}]$.
The mixed subdivision contains the type-II mixed cell
$(a_{\tau 0},a_{\tau L})+(a_{tL},a_{t R})+a_{\lambda 0}$ which contributes
$\mbox{\rm MV}_{\lambda}=|\Delta|=b_{tR}-b_{tL}$ to $e_{\lambda}$.
There are also two type-I cells intersecting $L$ at its leftmost and
rightmost subsegments, as in the previous case.
Since $t=m$, we have $\mu=\tau$, hence $e_t=0$.

The type-I mixed cells in any of the above mixed subdivisions vanish
when $b_{tL}=0$ or $b_{mR}=u$.
Notice that $e_i+e_j+e_{\lambda}=u$ and since $e_{\lambda}$ is maximized, $(e_0,e_1,e_2)$
defines a vertex of $N(\Phi)\subset\R^3$.
Projecting to the $e_0e_1$-plane yields the implicit vertex.

\begin{figure}[ht]
\psfrag{Al}{\footnotesize $A_{\lambda}$}
\psfrag{Ai}{\footnotesize $A_i$}
\psfrag{Aj}{\footnotesize $A_j$}
\psfrag{t=m=j}{\footnotesize $t=m=j$}
\psfrag{tn=m}{\footnotesize $i=t\neq m=j$}
\centering\includegraphics[width=0.9\textwidth]{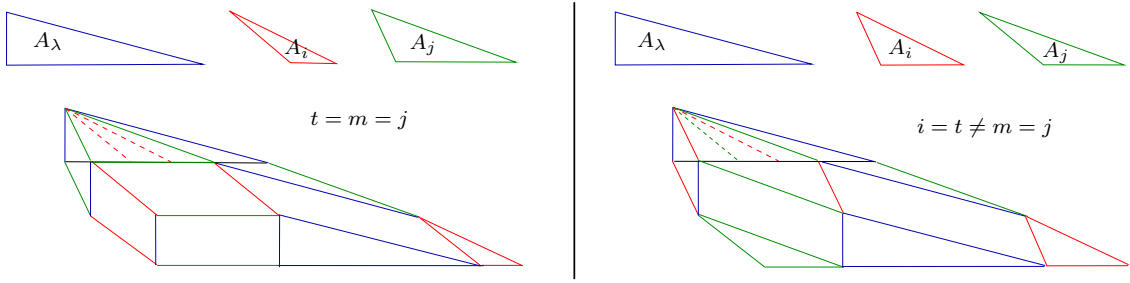}
\caption{Lemma~\protect\ref{Lsame_corners}: the
mixed subdivisions for the cases $t=m$ and $t \neq m$.  \label{Fig:L419} }
\end{figure}
\end{proof}
\end{lem}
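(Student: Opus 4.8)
The plan is to build, for a suitable lifting, a mixed subdivision of $C=C_0+C_1+C_2$ whose induced extreme monomial (Proposition~\ref{P:Sturmf_extreme}) has $x_0,x_1,x_2$-degrees exactly $e_0,e_1,e_2$, and then to argue the resulting point is a vertex of $N(\Phi)$. Throughout write $\{0,1,2\}=\{\lambda,i,j\}$, $t,m\in\{i,j\}$, $\tau\in\{i,j\}\setminus\{t\}$, $\mu\in\{i,j\}\setminus\{m\}$, and recall $e_0+e_1+e_2=u$ on $N(\Phi)$ by Lemma~\ref{Lresdeg}. The value of $e_\lambda$ is immediate: by Lemma~\ref{Lmv1mv2}, $\mathrm{MV}_\lambda=\mathrm{MV}_{\R^2}(A_i,A_j)=\mathrm{MV}_{\R}(B_i\cup B_j)=|\mathrm{CH}(B_i\cup B_j)|=|\Delta|$, and $\Delta=[b_{tL},b_{mR}]\subseteq[0,u]$ since every $b_{kl}\in[0,u]$; hence $e_\lambda=|\Delta|=\mathrm{MV}_\lambda$, the maximum value the $x_\lambda$-degree can attain by Proposition~\ref{P:Degree_bounds}.

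For the construction, by Theorem~\ref{Tnoint} we may confine attention to liftings affine on each $B_i$; such a lifting leaves every $C_i$ undivided, so a mixed cell uses as edge summands only the spokes $(a_{k0},a_{kL})$, $(a_{k0},a_{kR})$ or a whole base $(a_{kL},a_{kR})$. I would prescribe the cells meeting segment $L$ as follows: (i) a $\lambda$-mixed cell with height-$2$ cross-section $\Delta$, namely the type-I cell $(a_{t0},a_{tL})+(a_{m0},a_{mR})+a_{\lambda0}$ if $t\ne m$ and the type-II cell $(a_{\tau0},a_{\tau L})+(a_{tL},a_{tR})+a_{\lambda0}$ if $t=m$, which by Lemma~\ref{Ltypes} has area $b_{mR}-b_{tL}=\mathrm{MV}_\lambda$; (ii) a $\tau$-mixed type-I cell $(a_{\lambda0},a_{\lambda L})+(a_{t0},a_{tL})+a_{\tau0}$ covering the leftmost subsegment $[0,b_{tL}]$ of $L$, of area $b_{tL}$; (iii) a $\mu$-mixed type-I cell $(a_{\lambda0},a_{\lambda R})+(a_{m0},a_{mR})+a_{\mu0}$ covering the rightmost subsegment $[b_{mR},u]$, of area $u-b_{mR}$. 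These are always consistent: if $b_{tL}>0$ the hypothesis that $(0,0)$ lies in $A_2$ or in both of $A_0,A_1$ forces $b_{\lambda L}=0$ (otherwise $b_{tL}$ would be $0$), so $a_{\lambda L}=(0,0)$ and cell (ii) has the stated cross-section; symmetrically either $b_{\lambda R}=u$ or cell (iii) collapses; and a subsegment of length $0$ simply makes its cell vanish, exactly as asserted in the statement. Completing this to a tight mixed subdivision only subdivides the full bases into type-II cells of the same colour, leaving the colour-areas unchanged; and since by Lemma~\ref{Lresdeg} the type-I/II cells along $L$ are the only ones involving the vertex summands $a_{00},a_{10},a_{20}$ — i.e. the implicit variables $x_0,x_1,x_2$ — the extreme monomial of Proposition~\ref{P:Sturmf_extreme} equals $x_0^{e_0}x_1^{e_1}x_2^{e_2}$ times a monomial in the fixed generic coefficients of $P_0,P_1,Q$, with precisely the exponents of the statement.

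Finally I would upgrade ``this monomial occurs'' to ``this monomial is a vertex'', which is the step I expect to need the most care. Since $e_\lambda=\mathrm{MV}_\lambda$ is maximal while $e_0+e_1+e_2=u$ is fixed, the point $(e_0,e_1,e_2)$ lies on the face $\{e_\lambda=\mathrm{MV}_\lambda\}$ of the two-dimensional polytope $N(\Phi)$, and I would argue this face is a single point: every $\lambda$-mixed piece of $L$ lies inside $\Delta$, so attaining total $\lambda$-area $|\Delta|$ forces those pieces to tile $\Delta$ exactly, leaving only the residual subsegments $[0,b_{tL}]$ and $[b_{mR},u]$, whose colours are then pinned to $\tau$ and $\mu$ by the endpoint constraints used above (generically no $B_k$ can substitute as a type-II base on such a residual subsegment). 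When $t=m$ the point is a vertex for a trivial reason, having $e_\lambda=\mathrm{MV}_\lambda$ maximal and $e_t=0$ minimal simultaneously. Either way, Proposition~\ref{P:Sturmf_extreme} (equivalently Corollary~\ref{C:surjection-mxd-extreme}) certifies $(e_0,e_1,e_2)$ as a vertex of $N(\Phi)\subset\R^3$, and projecting out $e_2=u-e_0-e_1$ — a linear isomorphism on the plane $\{e_0+e_1+e_2=u\}$ that carries $N(\Phi)$ onto $N(\phi)$ — yields the implicit vertex $(e_0,e_1)$ of $N(\phi)$; the remaining assertions (areas, degeneracies) are bookkeeping.
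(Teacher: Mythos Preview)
Your proposal is correct and follows essentially the same approach as the paper: construct a specific mixed subdivision whose $\lambda$-, $\tau$-, and $\mu$-mixed cells along $L$ have exactly the areas $|\Delta|$, $b_{tL}$, $u-b_{mR}$, split into the same two cases $t\neq m$ (type-I central cell) and $t=m$ (type-II central cell), and then conclude vertexhood from $e_\lambda=\mathrm{MV}_\lambda$ being maximal on the hyperplane $e_0+e_1+e_2=u$. Your write-up is more explicit than the paper's in two places --- you verify that $b_{\lambda L}=0$ so that cell~(ii) really covers $[0,b_{tL}]$, and you flag that ``$e_\lambda$ maximal'' a priori only gives a face, attempting to pin down the residual colours --- but these are elaborations of the same argument rather than a different route.
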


The following corollary is proven similarly to the above proof.

\begin{cor}\label{C_C0}
Under the notation of lemma~\ref{Lsame_corners} consider the following~3 definitions:
\begin{enumerate}\item
$b_{tL}= \min\{b_{iL},b_{jL}\},~b_{mR}= \min\{b_{iR},b_{jR}\}$,
\item
$b_{tL}=\max\{b_{iL},b_{jL}\},~b_{mR}= \max\{b_{iR},b_{jR}\}$,
\item
$b_{tL}= \max\{b_{iL},b_{jL}\},~b_{mR}= \min\{b_{iR},b_{jR}\}$,
provided $b_{tL} \le b_{mR}$.
\end{enumerate}
In each case, define $e_0,e_1,e_2$ as in lemma~\ref{Lsame_corners}.
Then $(e_0,e_1) \in N(\phi)$.
\end{cor}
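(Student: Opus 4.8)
The proof follows the pattern of the proof of Lemma~\ref{Lsame_corners}. Fix one of the three cases and the associated indices $t,m\in\{i,j\}$, and put $\Delta=[b_{tL},b_{mR}]$, $\lambda\in\{0,1,2\}-\{i,j\}$, $\tau\in\{i,j\}-\{t\}$, $\mu\in\{i,j\}-\{m\}$. First note that $0\le b_{tL}\le b_{mR}\le u$: here $b_{tL}\ge 0$ because all supports lie in $\N^2$, $b_{mR}\le u$ by definition of $u$, and $b_{tL}\le b_{mR}$ holds automatically in Cases~1 and~2 and is precisely the hypothesis imposed in Case~3. The plan is to construct a regular tight mixed subdivision $S$ of $C=C_0+C_1+C_2$ --- one induced by a linear lifting, so that only the endpoints $b_{\nu L},b_{\nu R}$ of the $B_\nu$ are used --- whose restriction to $L$ is the concatenation of the three subsegments $[0,b_{tL}]$, $\Delta$ and $[b_{mR},u]$, covered respectively by: a type-I $\tau$-mixed cell on $[0,b_{tL}]$, contributing $b_{tL}$ to $e_\tau$; a cell of area $|\Delta|$ on $\Delta$ contributing $|\Delta|$ to $e_\lambda$ --- the type-I $\lambda$-mixed cell $(a_{t0},a_{tL})+(a_{m0},a_{mR_m})+a_{\lambda0}$ if $t\neq m$, and the type-II $\lambda$-mixed cell $(a_{\tau0},a_{\tau L})+(a_{tL},a_{tR})+a_{\lambda0}$ if $t=m$; and a type-I $\mu$-mixed cell on $[b_{mR},u]$, contributing $u-b_{mR}$ to $e_\mu$. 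These are exactly the cells that appear in the proof of Lemma~\ref{Lsame_corners}. The part of $C$ strictly above $L$ is then filled by the unmixed triangular copies of the $C_\nu$ forced by these cells (the remark following Lemma~\ref{Ltypes}), and the part below $L$ is completed arbitrarily to a tight subdivision; none of these additional cells is $a_{\nu0}$-mixed, so they leave $(e_0,e_1,e_2)$ unchanged, and by construction $e_0+e_1+e_2=b_{tL}+|\Delta|+(u-b_{mR})=u$, consistent with Lemma~\ref{Lresdeg}.

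Granting the existence of such an $S$, the corollary follows. By Corollary~\ref{C:surjection-mxd-extreme} and Proposition~\ref{P:Sturmf_extreme}, $S$ yields an extreme monomial of $\RR$ of the form~\eqref{Eq:Sturmf_extreme}, in which the exponent of $x_\nu$ equals $\sum_R\mathrm{Vol}(R)$ over the $a_{\nu0}$-mixed cells of $S$, that is $e_\nu$; the remaining part of that monomial is a monomial $M$ in the coefficients of $P_0,P_1,Q$, so the extreme term equals $\pm M\,x_0^{e_0}x_1^{e_1}x_2^{e_2}$. Viewing $\RR$ as a polynomial in $x_0,x_1,x_2$ with coefficients in $\Z[\text{coeffs of }P_0,P_1,Q]$, the coefficient of $x_0^{e_0}x_1^{e_1}x_2^{e_2}$ contains $\pm M$ as a monomial, hence is a nonzero polynomial and does not vanish for generic coefficients of $P_0,P_1,Q$; therefore $x_0^{e_0}x_1^{e_1}x_2^{e_2}$ occurs in $\Phi$, i.e.\ $(e_0,e_1,e_2)\in A(\Phi)\subseteq N(\Phi)$, and projecting to the $e_0e_1$-plane gives $(e_0,e_1)\in N(\phi)$. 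Note that, in contrast with Lemma~\ref{Lsame_corners}, here $e_\lambda=|\Delta|$ is in general strictly smaller than $\mathrm{MV}_\lambda=|\mathrm{CH}(B_i\cup B_j)|$ (Lemma~\ref{Lmv1mv2}), so the functional $e_\lambda$ is not maximized; accordingly the point is only asserted to lie in $N(\phi)$, not to be a vertex.

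The step that requires care is the existence of $S$, i.e.\ checking that the three designated cells are legitimate Minkowski cells built from summands of the appropriate $A_\nu$, and that together with the triangular cells above $L$ and a suitable completion below $L$ they form a regular tight mixed subdivision of $C$. For the middle cell this is verbatim as in Lemma~\ref{Lsame_corners}. For the two flanking cells one must realize the extreme abscissae $0$ and $u$ on $L$, which uses the structural facts that $b_{\nu L}=0$ for some $\nu$ and $b_{\nu'R}=u$ for some $\nu'$, together with the same case distinction as in Lemma~\ref{Lsame_corners} on whether $B_\lambda$ itself reaches $0$ (resp.\ $u$): if it does not, the corresponding flanking cell is taken of type~II. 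The degenerate subcases $b_{tL}=0$, $b_{mR}=u$ and $t=m$ --- in which a flanking cell collapses to zero area, respectively the middle cell is of type~II --- are handled exactly as in the proof of Lemma~\ref{Lsame_corners}. Carrying out these verifications in each of the three cases completes the proof.
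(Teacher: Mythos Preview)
Your approach matches the paper's, which simply states that the corollary ``is proven similarly to the above proof'' (i.e., the proof of Lemma~\ref{Lsame_corners}); you have fleshed out that one-line proof by explicitly exhibiting the mixed subdivision, by noting correctly that here $e_\lambda$ need not equal $\mathrm{MV}_\lambda$, and by adding the argument that the resulting extreme monomial of $\RR$ survives generic specialization so that $(e_0,e_1)$ actually lies in the support (hence the Newton polygon) of $\phi$. The one place where your write-up is slightly loose is the assertion that the flanking cell can always be taken of type~II when $B_\lambda$ misses the endpoint---this is exactly the kind of case-by-case bookkeeping the paper also elides, and in the applications (Lemmas~\ref{Lcase2b} and~\ref{Lcase3b}) the relevant flank degenerates anyway.
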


\subsection{The implicit vertices}

Overall, there are three cases for the relative positions of the $B_i$:
\begin{enumerate} \item
CH$(B_i\cup B_j)=[0,u]$ for all pairs $i,j$.
\item
CH$(B_j\cup B_l) =$ CH$(B_i\cup B_l) =[0,u]\ne$ CH$(B_i\cup B_j)$.
\item
CH$(B_i\cup B_j)=[0,u]\ne$ CH$(B_l\cup B_t)$ for $t=i,j$.
\end{enumerate}
Orthogonally, we can distinguish the following two cases:
\\ (A) there exists at least one $B_i=[0,u]$,
\\ (B) none of the $B_i$'s equals $[0,u]$.

In case~(B), every union $B_i\cup B_j$ contains either 0 or $u$.
Cases~(1B) and~(3A) cannot exist, which leaves~4 cases overall.
In the sequel, we let $E_{it}$ denote a segment $(a_{i0}, a_{it})\subset B_i$.

\begin{thm}[case~(A)]\label{Tcase2a}
If all unions CH$(B_i\cup B_j)=[0,u],i\ne j$, then
the implicit polygon is a triangle with vertices $(0,0), (0,u), (u,0)$.
If exactly one support, say $B_k,k\in\{0,1,2\}$, equals $[0,u]$, then $N(\phi)$
has up to~5 vertices in the following set of $(e_i,e_j)$ vectors:
$$
\{ (u,0), (0,u), (0,u-b_{iR}+b_{iL}), (b_{jL},u-b_{iR}) , (u-b_{jR}+b_{jL},0) \} ,
$$
where $\{i,j,k\}=\{0,1,2\}$, assuming $i,j$ are chosen so that
\begin{equation}\label{EQassume}
b_{iL}(u-b_{jR})\ge b_{jL}(u-b_{iR}) .
\end{equation}
\begin{figure}[ht]
\psfrag{ei}{$e_i$} \psfrag{ej}{$e_j$}\psfrag{e2}{$e_k$}
\psfrag{biL}{$b_{iL}$} \psfrag{bjL}{$b_{jL}$} \psfrag{biR}{$b_{iR}$}
\psfrag{bjR}{$b_{jR}$} \psfrag{u}{$u$} \psfrag{u_biR}{$u-b_{iR}$}
\psfrag{u_Bi}{$u-b_{iR}+b_{iL}$} \psfrag{u_Bj}{$u-b_{jR}+b_{jL}$}
\centering\includegraphics[width=0.55\textwidth]{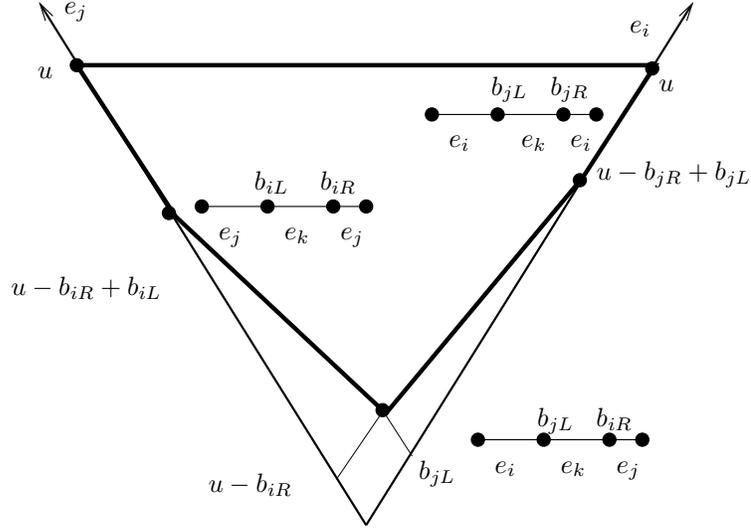}
\caption{The implicit polygon in case~(2A), in the $e_ie_j$-plane,
and the subdivisions of the proof of
theorem~\ref{Tcase2a}.}\label{Fig:case2a}
\end{figure}
\begin{proof}
First is the case~(1A), established from lemma~\ref{Lsame_corners}.
The second statement concerns case~(2A):
By switching $i$ and $j$, assumption~(\ref{EQassume}) can always be satisfied.
Unless $B_i\subset B_j$ or $B_j\subset B_i$, this assumption holds simply by
choosing $i,j$ so that $b_{jL}\le b_{iL}$.

The vertices $(u,0), (0,u)$ are obtained by lemma~\ref{Lsame_corners},
applied to CH$(B_j\cup B_k)$ and CH$(B_i\cup B_k)$ respectively.
The third point is obtained by a mixed subdivision with two type-I cells
$E_{iL}+a_{j0}+E_{kL}, E_{iR}+a_{j0}+E_{kR}$, which contribute the lengths of
$[b_{kL},b_{iL}], [b_{iR},b_{kR}]$ to $e_j$,
and one type-II cell $E_{i0}+E_{jt}+a_{k0}$, contributing the length of
$[b_{iL},b_{iR}]$ to $e_k$, where $E_{i0}$
is the horizontal edge of $A_i$ and $t \in\{L,R\}$; see figure~\ref{Fig:case2a}.
By switching $i$ and $j$ we define a subdivision that yields the fifth point.

The fourth point is obtained by a subdivision with~3 type-I cells:
$a_{i0}+E_{jL}+E_{kL}, E_{iR}+a_{j0}+E_{kR}$ and $E_{iR}+E_{jL}+a_{k0}$,
which contribute to $e_i,e_j$ and $e_k$ respectively, see fig.~\ref{Fig:case2a}.
It suffices to show that the line defined by this and the third point
supports the implicit polygon.
An analogous proof then shows that the line defined by this and the~5th
point also supports the polygon, and the theorem follows.
Our claim is equivalent to showing
\begin{equation}\label{EQccw2a}
\det\left[ \begin{array}{ccc}
b_{jR} & u-b_{iR} & 1 \\ 0 & u-b_{iR}+b_{iL} & 1 \\ e_i & e_j & 1 \end{array}\right]
\le 0 \Leftrightarrow b_{iL} (e_{i}-b_{jL}) \ge b_{jL} (u-b_{iR}-e_j) .
\end{equation}
We consider the rightmost subsegment on $L$, where one endpoint is $b_{kR}=u$.
This contributes to either $e_i$ or $e_j$ an amount equal to the length of
a subsegment extending at least as far left as $b_{jR}$ or $b_{iR}$, respectively.
Symmetrically, the leftmost subsegment has endpoint $b_{kL}=0$
and contributes to $e_i$ or $e_j$ the length of a subsegment
extending at least as far right as $b_{jL}$ or $b_{iL}$, respectively.
In general, there are~4 cases, depending on the contribution
of the rightmost and leftmost subsegments.
The last case is infeasible if $B_i,B_j$ have no overlap.

If the rightmost subsegment contributes to $e_j$ then $e_j\ge u-b_{iR}$.
If the leftmost subsegment contributes to $e_j$ then
this contribution is at least $b_{iL}$, hence $e_j\ge u-b_{iR}+b_{iL}$,
where $e_i\ge 0$.
Otherwise, the leftmost subsegment contributes to $e_i$, thus $e_i\ge b_{jL}$.
In both cases, inequality~(\ref{EQccw2a}) follows.

If the rightmost subsegment contributes to $e_i$ then $e_i\ge u-b_{jR}$.
If the leftmost subsegment also contributes to $e_i$, then $e_i\ge u-b_{jR}+b_{jL}$.
Using also $e_j\ge 0$, it suffices to prove $b_{iL}(u-b_{jR})\ge b_{jL}(u-b_{iR})$.
Otherwise, the leftmost subsegment contributes to $e_j$, so $e_j\ge b_{iL}$, and it
suffices to prove $b_{iL}(u-b_{jR}-b_{jL})\ge b_{jL}(u-b_{iR}-b_{iL})$.
Both sufficient conditions are equivalent to assumption~(\ref{EQassume}).
\end{proof}
\end{thm}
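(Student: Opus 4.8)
The plan is to work with the implicit polytope $N(\Phi)\subset\R^3$, projected onto the two coordinates $(e_i,e_j)$. Since $\RR$ is homogeneous of degree $u$ in the coefficients of the $a_{m0}$ (Lemma~\ref{Lresdeg}) and $\deg\Phi=u$ (Proposition~\ref{P:Degree_bounds}), $N(\Phi)$ lies in the hyperplane $e_0+e_1+e_2=u$, so the missing coordinate is recovered from the other two, this projection is an affine isomorphism onto its image carrying vertices to vertices, and it identifies $N(\Phi)$ with $N(\phi)$ up to the relabelling of coordinates. By Proposition~\ref{P:Degree_bounds} together with Lemma~\ref{Lmv1mv2}, in case~(A) every exponent vector satisfies $e_i,e_j\ge 0$ and $e_i\le\mathrm{MV}_{\R^2}(A_j,A_k)=\mathrm{MV}_{\R}(B_j\cup B_k)=u$ (using $B_k=[0,u]$), and symmetrically $e_j\le u$; hence the projected $N(\Phi)$ sits inside the triangle $T:=\{e_i,e_j\ge 0,\ e_i+e_j\le u\}$. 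It therefore suffices to: (a) for each claimed point, display a mixed subdivision of $C=C_0+C_1+C_2$ whose extreme monomial — read off by Proposition~\ref{P:Sturmf_extreme}, each cell meeting segment $L$ contributing its length on $L$ to the corresponding coordinate (Lemma~\ref{Ltypes}) — realises that point, so that the convex hull $H$ of the claimed points satisfies $H\subseteq N(\Phi)$; and (b) verify that each edge of $H$ supports $N(\Phi)$, which forces $N(\Phi)=H$. The asserted (up to five) vertices are then the extreme points of $H$.

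Case~(1A) is quick: applying Lemma~\ref{Lsame_corners} to each of the three pairs of indices, with its endpoint-correction terms vanishing because $b_{tL}=0$ and $b_{mR}=u$ for every pair, produces $(u,0,0),(0,u,0),(0,0,u)\in N(\Phi)$, i.e.\ the points $(u,0),(0,u),(0,0)$ in the projection. Hence $T$ is contained in the projected polytope, and combined with the reverse inclusion from the first paragraph the polytope equals $T$, i.e.\ the triangle with vertices $(0,0),(0,u),(u,0)$.

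For case~(2A), first fix the labelling: after possibly exchanging $i$ and $j$ we may assume~(\ref{EQassume}) — when neither of $B_i,B_j$ contains the other this amounts to taking $b_{jL}\le b_{iL}$, and the containment sub-case is checked by hand. Membership of the five candidate points is then witnessed as follows. The points $(u,0)$ and $(0,u)$ follow from Lemma~\ref{Lsame_corners} applied to the pairs $\{j,k\}$ and $\{i,k\}$, using $B_k=[0,u]$. The point $(0,u-b_{iR}+b_{iL})$ is realised by the mixed subdivision whose cells meeting $L$ are the two type-I cells $E_{iL}+a_{j0}+E_{kL}$ and $E_{iR}+a_{j0}+E_{kR}$, which send the lengths of $[0,b_{iL}]$ and $[b_{iR},u]$ to $e_j$, together with the type-II cell built from the vertex $a_{k0}$, the horizontal edge of $A_i$, and an edge of $A_j$, which sends $[b_{iL},b_{iR}]$ to $e_k$; the point $(u-b_{jR}+b_{jL},0)$ is its image under $i\leftrightarrow j$. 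Finally $(b_{jL},u-b_{iR})$ comes from the subdivision with the three type-I cells $a_{i0}+E_{jL}+E_{kL}$, $E_{iR}+a_{j0}+E_{kR}$ and $E_{iR}+E_{jL}+a_{k0}$, feeding $e_i$, $e_j$ and $e_k$ respectively. In each case the coordinates follow by summing cell areas, since an area equals the length of the cell along $L$.

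What remains — and is the crux — is that the two \emph{corner-cut} edges of $H$, namely the one joining $(0,u-b_{iR}+b_{iL})$ to $(b_{jL},u-b_{iR})$ and the one joining the latter to $(u-b_{jR}+b_{jL},0)$, support $N(\Phi)$; the remaining three edges lie on $\partial T$ and support trivially. For the first edge this is precisely the orientation inequality~(\ref{EQccw2a}). To prove it, take any $(e_i,e_j)\in N(\phi)$: by Corollary~\ref{C:surjection-mxd-extreme} it arises from a mixed subdivision, and by Theorem~\ref{Tnoint} we may assume the subdivision uses only the endpoints $b_{iL},b_{iR},b_{jL},b_{jR}$ (and $b_{kL}=0,\ b_{kR}=u$). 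Lemma~\ref{Ltypes} then forces the leftmost subsegment of $L$ (ending at $0$) and the rightmost one (starting at $u$) each to feed $e_i$ or $e_j$ — never $e_k$ — and the cell carrying the rightmost subsegment must reach at least to $b_{jR}$ if it feeds $e_i$, or at least to $b_{iR}$ if it feeds $e_j$ (symmetrically on the left, with $b_{jL}$ and $b_{iL}$). A four-way case split on where these two extreme subsegments go, using $e_k\ge 0$ where needed, yields~(\ref{EQccw2a}) in each case, with assumption~(\ref{EQassume}) being exactly what closes the case where both extreme subsegments feed $e_i$. The second cut edge is handled symmetrically under $i\leftrightarrow j$, with a little extra care since~(\ref{EQassume}) is not itself symmetric — the lower bounds on the reach of the extreme subsegments are what still make it go through. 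I expect this final case analysis, together with the reliance on Theorem~\ref{Tnoint} to reduce to endpoint-defined subdivisions, to be the main obstacle; everything else is routine area bookkeeping and planar convexity.
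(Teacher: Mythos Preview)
Your proposal is correct and follows essentially the same approach as the paper: case~(1A) via Lemma~\ref{Lsame_corners}, then in case~(2A) the same explicit mixed subdivisions realise each of the five candidate points, and the supporting-edge argument reduces to the identical four-way case split on where the leftmost and rightmost subsegments of $L$ contribute. You are somewhat more explicit than the paper in two places --- invoking Theorem~\ref{Tnoint} to justify restricting to endpoint-only subdivisions, and framing the argument as the two containments $H\subseteq N(\Phi)$ and $N(\Phi)\subseteq H$ --- but the substance and the constructions are the same.
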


\begin{thm}[case~(B)]\label{Tcase2}
If none of the $B_t$'s is equal to $[0,u]$, then we may choose $\{i,j,k\}=\{0,1,2\}$
such that:
$$
0 < b_{iL}\le b_{iR} =u, 0=b_{jL}\le b_{jR}<u, 0 \le b_{kL}\le b_{kR}<u .
$$
Then, $N(\phi)$ has at most~5 or~4 vertices, depending on whether $b_{kL}$ is positive or~0.
In the former case, the vertices $(e_i,e_j)$ lie in
$$
\{ (b_{jR},0), (b_{kR},u-b_{kR}), (b_{kL},u-b_{kL}), (0,u-b_{0L}), (0,0), \}
$$
and, in the latter case, the third and fourth vertices are replaced by $(0,u)$.
\end{thm}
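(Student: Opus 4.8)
By the case analysis preceding the statement we are in case~(2B) or~(3B), and the asserted labeling is then available; the sub-case $b_{kL}=0$ is~(2B) and $b_{kL}>0$ is~(3B). By Lemma~\ref{Lresdeg} together with Proposition~\ref{P:Degree_bounds} every point $(e_0,e_1,e_2)$ of $N(\Phi)$ lies in the plane $e_0+e_1+e_2=u$, so $N(\phi)$ is the image of $N(\Phi)$ under the affine bijection $(e_0,e_1,e_2)\mapsto(e_i,e_j)$ of this plane, with $e_k=u-e_i-e_j$. By Corollary~\ref{C:surjection-mxd-extreme} and Proposition~\ref{P:Sturmf_extreme} each vertex of $N(\Phi)$ is the exponent vector of some $\omega$-extreme monomial and hence comes from a regular tight mixed subdivision of $C$, and by Theorem~\ref{Tnoint} it suffices to consider subdivisions induced by linear liftings; for these the subdivision points on the segment $L=[0,u]$ all lie in $\{0,b_{iL},b_{jR},b_{kL},b_{kR},u\}$. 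The plan is to show that $N(\phi)$ is exactly the polygon obtained from the triangle with vertices $(0,0),(0,u),(u,0)$ by cutting the corner $(u,0)$ along the line through $(b_{jR},0)$ and $(b_{kR},u-b_{kR})$, i.e.\ $\frac{e_j}{u-b_{kR}}+\frac{e_k}{u-b_{jR}}\ge 1$, and --- only when $b_{kL}>0$ --- cutting the corner $(0,u)$ along the line through $(b_{kL},u-b_{kL})$ and $(0,u-b_{iL})$, i.e.\ $b_{iL}e_i+b_{kL}e_k\ge b_{iL}b_{kL}$. A short computation of these two corner cuts, together with the check that each newly created vertex satisfies the other inequality (all such checks reducing to a $3\times3$ orientation determinant, as in~\eqref{EQccw2a}), shows that this polygon has precisely the five listed vertices, and that for $b_{kL}=0$ it degenerates to the quadrilateral $(0,0),(b_{jR},0),(b_{kR},u-b_{kR}),(0,u)$ with $(b_{kL},u-b_{kL})=(0,u)$ and $(0,u-b_{iL})$ lying on the edge $e_i=0$; once the set equality is established, this yields the claimed vertex count.

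For the inclusion ``$\supseteq$'' I would realize each corner by an explicit tight mixed subdivision, in the style of Lemma~\ref{Lsame_corners} and the proof of Theorem~\ref{Tcase2a}. The corner $(0,0)$ (monomial $x_k^{\,u}$) is produced by the single type-I $k$-mixed cell with footprint $[b_{jL},b_{iR}]=[0,u]$, equivalently by Lemma~\ref{Lsame_corners} applied to the pair $\{i,j\}$. The corners $(b_{jR},0)$ and $(b_{kR},u-b_{kR})$ come from Lemma~\ref{Lsame_corners} and Corollary~\ref{C_C0} applied to the pair $\{j,k\}$ with $\lambda=i$; concretely, $(b_{jR},0)$ from a type-II $i$-mixed cell with footprint $B_j=[0,b_{jR}]$ and a type-I $k$-mixed cell with footprint $[b_{jR},u]$, and $(b_{kR},u-b_{kR})$ from a type-I $i$-mixed cell with footprint $[0,b_{kR}]$ and a type-I $j$-mixed cell with footprint $[b_{kR},u]$. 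When $b_{kL}>0$, the corner $(b_{kL},u-b_{kL})$ comes from a type-I $i$-mixed cell with footprint $[0,b_{kL}]$ and a type-I $j$-mixed cell with footprint $[b_{kL},u]$, and $(0,u-b_{iL})$ from a type-I $k$-mixed cell with footprint $[0,b_{iL}]$ together with a type-II $j$-mixed cell with footprint $B_i=[b_{iL},u]$; when $b_{kL}=0$ the corner $(0,u)$ (monomial $x_j^{\,u}$) is produced by the single type-I $j$-mixed cell with footprint $[b_{kL},b_{iR}]=[0,u]$. In each case one uses Lemma~\ref{Ltypes} to check that the listed cells assemble, with the complementary unmixed triangles, into a mixed subdivision of $C$, and Proposition~\ref{P:Sturmf_extreme} to read off the asserted exponents; convexity of $N(\phi)$ then gives the whole polygon.

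The core is the inclusion ``$\subseteq$'', i.e.\ verifying that the two cut-line inequalities hold for the exponent vector of every point produced by a linear-lifting mixed subdivision $S$. Nonnegativity of $e_i,e_j,e_k$ and $e_i+e_j+e_k=u$ are immediate, so only these two inequalities need argument, and each is obtained by examining one extreme subsegment of $L$ under $S$ and walking inward, exactly as in the concluding case analysis of Theorem~\ref{Tcase2a}. For the first inequality one starts from the rightmost subsegment, whose right endpoint is $u=b_{iR}$: by Lemma~\ref{Ltypes} the cell above it is a type-I $j$- or $k$-mixed cell with footprint $[b_{k\bullet},u]$ resp.\ $[b_{j\bullet},u]$, or a type-II $j$- or $k$-mixed cell with footprint $B_i=[b_{iL},u]$; in the type-I cases $e_j\ge u-b_{kR}$ or $e_k\ge u-b_{jR}$ and we are done, while in the type-II cases one walks left, each further cell adding to $e_j$ a length reaching at least $b_{kR}$ from the right or to $e_k$ a length reaching at least $b_{jR}$ from the right, until the inequality is forced. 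For the second inequality (needed only when $b_{kL}>0$) one starts from the leftmost subsegment, whose left endpoint $0=b_{jL}$ is an endpoint neither of $B_i$ (as $b_{iL}>0$) nor of $B_k$ (as $b_{kL}>0$): Lemma~\ref{Ltypes} then forces the cell above it to be $k$-mixed, either type-I with footprint $[0,b_{i\bullet}]$ or type-II with footprint $B_j$, and the same inward walk yields $b_{iL}e_i+b_{kL}e_k\ge b_{iL}b_{kL}$; when $b_{kL}=0$ this inequality is vacuous. The main obstacle is precisely this inward-walk bookkeeping: unlike case~(A), neither $0$ nor $u$ is an endpoint of all three segments, so an extreme subsegment is not covered by a uniquely typed cell, and one must chase a chain of adjacent cells --- each a type-I cell reaching to the next available $B_\bullet$-endpoint, or a type-II cell equal to some CH$(B_l)$ --- until the accumulated length meets the threshold $u-b_{kR}$, $u-b_{jR}$, or $b_{iL}$ dictated by the facet, using the linear-lifting restriction to rule out the configurations that would overshoot, and absorbing coincidences among the six endpoints $\{0,b_{iL},b_{jR},b_{kL},b_{kR},u\}$ into the case split (they only shorten the chains). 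Combining the two inclusions gives $N(\phi)$ equal to the stated polygon, hence the claimed vertex set.
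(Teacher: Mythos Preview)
Your plan is essentially the paper's own: the theorem is proved there via two lemmas (Lemma~\ref{Lcase2b} for $b_{kL}=0$ and Lemma~\ref{Lcase3b} for $b_{kL}>0$), realizing each listed point through Lemma~\ref{Lsame_corners} or Corollary~\ref{C_C0} and then establishing the supporting-line inequality~\eqref{EQccw} by a case analysis on what the extreme subsegment of $L$ contributes---exactly your ``inward walk.'' One small correction to your second-cut sketch: the leftmost cell is \emph{not} forced to be $k$-mixed, since it may also be $i$-mixed (type~I with footprint $[0,b_{k\bullet}]$, or type~II with footprint $B_j$ and edge summand from $A_k$); your walk needs these extra branches, though each still yields $b_{iL}e_i+b_{kL}e_k\ge b_{iL}b_{kL}$ directly.
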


By lemma~\ref{Lresdeg}, at every point $e_k= u-e_i-e_j$.
The theorem is established by the following two lemmas.

\begin{lem}[case~(2B)]\label{Lcase2b}
Suppose $b_{kL}=0$ in theorem~\ref{Tcase2}
and wlog assume $b_{jR}\le b_{kR}$.
Then, $N(\phi)$ has up to~4 vertices $(e_i,e_j)$ in the set
$$
\{ (b_{jR},0), (b_{kR},u-b_{kR}), (0,u), (0,0) \} .
$$
\begin{proof}
The last two vertices follow from lemma~\ref{Lsame_corners}, applied to
CH$(B_i\cup B_k)$ and CH$(B_i\cup B_j)$, respectively.
The same lemma, applied to CH$(B_j\cup B_k)$, yields the second vertex,
whereas cor.~\ref{C_C0}(1) yields the first point.
It suffices to show that any point $(e_i,e_j)\in N(\phi)$ defines a counter-clockwise
turn in the $e_ie_j$-plane, when appended to $(b_{jR},0)$ and $(b_{kR},u-b_{kR})$.
This is equivalent to proving
\begin{equation}\label{EQccw}
\det\left[ \begin{array}{ccc}
b_{jR} & 0 & 1 \\ b_{kR} & u-b_{kR} & 1 \\ e_i & e_j & 1 \end{array}\right]
\ge 0 \Leftrightarrow e_j(b_{kR}-b_{jR}) \ge (u-b_{kR}) (e_i-b_{jR}) .
\end{equation}
Rightmost segment $[b_{kR},b_{iR}=u]$ cannot contribute to $e_i$, since
each corresponding mixed cell has an edge summand from $A_i$.
If the segment lies in a $j$-mixed cell, then $e_j\ge u-b_{kR}$ and $e_i\le b_{kR}$,
and inequality~(\ref{EQccw}) is proven.
Otherwise, at least a subsegment contributes to a $k$-mixed cell.

If this subsegment contains $b_{kR}$, then it must extend at least to the next
endpoint lying left of $b_{kR}$, hence to $b_{jR}$ or $b_{iL}$.
In the latter case, the subsegment to the left of $b_{iL}$ cannot contribute to $e_i$.
Thus, in any case, $e_i\le b_{jR}$, so~(\ref{EQccw}) is proven.

If none of the above happens, then the subsegment contributing to $e_k$ does not
contain $b_{kR}$, so the only way for the $k$-mixed cell to be defined is to have
$b_{iL}$ lie in $(b_{kR},b_{iR})$ and $k$-mixed cell intersecting $L$ at $[b_{iL},b_{iR}]$.
Then, $[b_{kR},b_{iL}]$ contributes to $e_j$, so the $j$-mixed cell
intersects $L$ at $[b_{kt},b_{iL}]$, where $t\in\{L, R \}$.
If $b_{kt}=b_{kL}$, then $e_i=0$ and~(\ref{EQccw}) is proven.

Otherwise, $b_{kt}=b_{kR}$.
The $j$-mixed cell is of type~I and implies that the $1$-dimensional face
$(b_{iL}, 2)+E_{kR}$ belongs to the subdivision, see lemma~\ref{Ltypes}.
The $k$-mixed cell is of type~II, with some edge summand $E_{jt}\subset A_j$, which implies
that the $1$-face $(b_{iL}, 2)+E_{jt}$ is in the subdivision and cannot lie to the
left of the previous $1$-face.
Since $b_{jR}\le b_{kR}$, we have $b_{kR}= b_{jR}$, hence $e_i\le b_{jR}$.
\end{proof}
\end{lem}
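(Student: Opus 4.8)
The plan is to sandwich $N(\phi)$ between the convex quadrilateral $Q$ spanned by the four listed points and $N(\phi)$ itself. For the inclusion $Q\subseteq N(\phi)$ I would realize each of the four points by an explicit mixed subdivision: the points $(0,u)$, $(0,0)$ and $(b_{kR},u-b_{kR})$ come from Lemma~\ref{Lsame_corners} applied respectively to the pairs $\{B_i,B_k\}$, $\{B_i,B_j\}$, $\{B_j,B_k\}$, where $b_{jL}=b_{kL}=0$, $b_{iR}=u$ and (wlog) $b_{jR}\le b_{kR}$ make the cells of that lemma yield $(e_i,e_j,e_k)=(0,u,0)$, $(0,0,u)$, $(b_{kR},u-b_{kR},0)$; the point $(b_{jR},0)$ comes from Corollary~\ref{C_C0}(1) applied to $\{B_j,B_k\}$, yielding $(b_{jR},0,u-b_{jR})$. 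Projecting to the $e_ie_j$-plane gives the four points. Listing them counterclockwise as $(0,0),(b_{jR},0),(b_{kR},u-b_{kR}),(0,u)$ one checks $Q$ is convex and degenerates to a triangle exactly when $b_{jR}=0$, which is the source of the phrase ``up to~4 vertices''.

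For the reverse inclusion $N(\phi)\subseteq Q$ I would use Lemma~\ref{Lresdeg}: every point of $N(\Phi)$ satisfies $e_i+e_j+e_k=u$ with all three coordinates nonnegative, so $N(\phi)$ lies in the triangle $\{e_i\ge0,\ e_j\ge0,\ e_i+e_j\le u\}$. Three of the four edges of $Q$ lie on the sides of this triangle --- $(0,u)$--$(0,0)$ on $e_i=0$, $(0,0)$--$(b_{jR},0)$ on $e_j=0$, and $(b_{kR},u-b_{kR})$--$(0,u)$ on $e_k=u-e_i-e_j=0$ --- so it remains only to show that every $(e_i,e_j)\in N(\phi)$ lies on the inner side of the fourth edge, which is exactly inequality~(\ref{EQccw}).

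To establish~(\ref{EQccw}) I would fix a mixed subdivision realizing $(e_i,e_j,e_k)$ and study the partition of segment $L=[(0,2),(u,2)]$ into subsegments induced by the type-I and type-II cells, each contributing its length to the coordinate of its mixing vertex (Lemma~\ref{Lresdeg} together with the identifications preceding Theorem~\ref{Tnoint}); by Theorem~\ref{Tnoint} I may assume all breakpoints on $L$ are at endpoints of the $B_t$. The decisive observation, via Lemma~\ref{Ltypes}, is that an $i$-mixed cell has no edge summand from $A_i$, so a subsegment whose right endpoint is the abscissa $u$ (which forces an $A_i$ summand) is $j$- or $k$-mixed; hence everything on $L$ to the right of $b_{kR}$ feeds only $e_j$ and $e_k$, giving $e_i\le b_{kR}$. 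It then suffices to case-split on the cell(s) covering $[b_{kR},u]$: (i) one $j$-mixed cell, where $e_j\ge u-b_{kR}$ combined with $e_i\le b_{kR}$ gives~(\ref{EQccw}) at once; (ii) a $k$-mixed cell meeting $[b_{kR},u]$, traced leftward using that its cross-section endpoints lie in $B_i\cup B_j$ with $B_j\subseteq[0,b_{jR}]$, to conclude $e_i\le b_{jR}$ (in the subcase where the trace stops at $b_{iL}$, nothing left of $b_{iL}$ is $i$-mixed, so again $e_i\le b_{jR}$); and (iii) the leftover configuration, where the $k$-mixed cell near $u$ is the type-II cell over $[b_{iL},u]$ and the adjacent $j$-mixed cell is type-I, and the $1$-faces these two cells force at $(b_{iL},2)$ --- namely $(b_{iL},2)+E_{kR}$ and $(b_{iL},2)+E_{jt}$ by Lemma~\ref{Ltypes} --- together with proper intersection and $b_{jR}\le b_{kR}$ force $b_{kR}=b_{jR}$, hence $e_i\le b_{jR}$. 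In every case the determinant in~(\ref{EQccw}) is nonnegative, which completes the proof.

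I expect the main obstacle to be case~(iii) of the analysis on $L$: correctly reading off which lower-dimensional faces of the subdivision are forced by the combinatorial type of a given mixed cell, and using compatibility of the Minkowski-sum descriptions of neighbouring cells to collapse the degenerate configuration down to $b_{kR}=b_{jR}$. The earlier steps --- exhibiting the four subdivisions and reducing to~(\ref{EQccw}) --- are routine bookkeeping once Lemmas~\ref{Lsame_corners}, \ref{Lresdeg}, \ref{Ltypes} and Corollary~\ref{C_C0} are available.
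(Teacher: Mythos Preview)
Your proposal is correct and mirrors the paper's proof: the four points are realized via Lemma~\ref{Lsame_corners} and Corollary~\ref{C_C0}(1), the reverse inclusion is reduced to inequality~(\ref{EQccw}) by observing that the other three edges of $Q$ lie on the boundary of the simplex $e_i+e_j+e_k=u$, and~(\ref{EQccw}) is then established by the same case analysis on how the stretch $[b_{kR},u]\subset L$ is covered. The only detail to add in your case~(iii) is the easy sub-case where the adjacent type-I $j$-mixed cell uses the edge $E_{kL}$ rather than $E_{kR}$ (so it covers $[b_{kL},b_{iL}]=[0,b_{iL}]$ and $e_i=0$ immediately); the paper disposes of this before invoking the $1$-face compatibility argument you describe.
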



\begin{lem}[case~(3B)]\label{Lcase3b}
Suppose $b_{kL} > 0$ in theorem~\ref{Tcase2}.
Then, $N(\phi)$ has up to~5 vertices $(e_i,e_j)$ in the set
$$
\{ (b_{jR},0), (b_{kR},u-b_{kR}), (b_{kL},u-b_{kL}), (0,u-b_{iL}), (0,0) \} .
$$
\begin{proof}
The last vertex follows from lemma~\ref{Lsame_corners}, applied to CH$(B_i\cup B_j)$.
We shall prove that the first two points are vertices; they are
obtained by using CH$(B_j\cup B_k)$ in lemma~\ref{Lsame_corners} and cor.~\ref{C_C0}(1).
Which point is obtained from which lemma depends on the sign of $b_{jL}-b_{kL}$.
The third and fourth vertices are established analogously, by considering
CH$(B_i\cup B_k)$.

Our proof shall establish inequality~(\ref{EQccw}).
If $b_{jR}\le b_{kR}$, this is similar to the proof of lemma~\ref{Lcase2b}.
Otherwise, $b_{kR} < b_{jR}$, and
the rightmost segment $[b_{jR},b_{iR}=u]$ cannot contribute to $e_i$.
If it contributes to $e_k$ only, then $e_k\ge u-b_{jR}$ so $e_i+e_j\le b_{jR}$
and~(\ref{EQccw}) follows.

If it contributes to $e_j$ only, the union of the corresponding $j$-mixed cells
intersect $L$ at a segment with an endpoint to the left of $b_{jR}$,
namely $b_{kt}, t\in\{L,R\}$, or $b_{iL}$.
In the former case,
$e_i\le b_{kR}$ and $e_j\ge u-b_{kR}$.
In the latter case, $[0,b_{iL}]$ contributes to $e_k$ only, so $e_i=0,e_j=u-b_{iL}$.
In both cases,~(\ref{EQccw}) follows readily.

Lastly, $[b_{jR},b_{iR}]$ might be split into subsegments
$[b_{jR},b_{iL}], [b_{iL},b_{iR}]$, contributing to $e_k,e_j$ respectively.
The corresponding cells are of type~I and type~II,
the latter having an edge summand from $A_k$.
This requires the subdivision to have $j$-faces
$(b_{iL},k)+E_{jR}$ and $(b_{iL},2)+E_{kt}, t \in\{L,R\}$,
where the first lies to the left of the second, see lemma~\ref{Ltypes}.
This cannot happen because $b_{kR}<b_{jR}$.
\end{proof}
\end{lem}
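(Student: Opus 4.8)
The plan is to mirror closely the structure of the proof of Lemma~\ref{Lcase2b}, treating the case $b_{kL}>0$ as a perturbation that introduces one extra potential vertex. First I would establish that the five listed points actually lie in $N(\phi)$. The last point $(0,0)$ comes from Lemma~\ref{Lsame_corners} applied to $\mathrm{CH}(B_i\cup B_j)=[0,u]$ (which forces $e_k=u$, hence $e_i=e_j=0$). For the pair $(b_{jR},0)$ and $(b_{kR},u-b_{kR})$, I would apply Lemma~\ref{Lsame_corners} to $\mathrm{CH}(B_j\cup B_k)$ and Corollary~\ref{C_C0}(1) to the same union; which of the two points each produces depends on the sign of $b_{jL}-b_{kL}$, exactly as the statement indicates. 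The remaining two points $(b_{kL},u-b_{kL})$ and $(0,u-b_{iL})$ are produced symmetrically by feeding $\mathrm{CH}(B_i\cup B_k)$ into Lemma~\ref{Lsame_corners} and Corollary~\ref{C_C0}; here $b_{iR}=u$ plays the role that $b_{jR}$ played before.

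Next I would prove that no other point of $N(\phi)$ lies outside the polygon spanned by these five. As in Lemma~\ref{Lcase2b}, the key inequality to certify is the counter-clockwise test~(\ref{EQccw}) relative to the edge joining $(b_{jR},0)$ and $(b_{kR},u-b_{kR})$; the analogous test for the edge between $(b_{kL},u-b_{kL})$ and $(0,u-b_{iL})$ follows by the $i\leftrightarrow j$-type symmetry. To verify~(\ref{EQccw}) I would look at the rightmost subsegment of $L$, namely $[b_{?},b_{iR}=u]$. When $b_{jR}\le b_{kR}$ the argument is verbatim that of Lemma~\ref{Lcase2b}. When $b_{kR}<b_{jR}$, I split into cases according to which coordinate the rightmost segment $[b_{jR},u]$ feeds: if it contributes only to $e_k$, then $e_k\ge u-b_{jR}$ gives $e_i+e_j\le b_{jR}$ and~(\ref{EQccw}) is immediate; if only to $e_j$, then the union of the corresponding $j$-mixed cells reaches left to some $b_{kt}$ or to $b_{iL}$, and in each subcase one reads off either $e_i\le b_{kR},\,e_j\ge u-b_{kR}$ or $e_i=0,\,e_j=u-b_{iL}$; and if $[b_{jR},u]$ splits as $[b_{jR},b_{iL}]\cup[b_{iL},u]$ contributing to $e_k,e_j$, then by Lemma~\ref{Ltypes} the subdivision would contain $1$-faces $(b_{iL},k)+E_{jR}$ and $(b_{iL},2)+E_{kt}$ with the first left of the second, which contradicts $b_{kR}<b_{jR}$.

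The main obstacle I anticipate is the bookkeeping in that last case: one must argue that the forbidden ordering of the two $1$-faces is genuinely forced, using that every $k$-mixed cell has an edge summand from $A_k$ (whose rightmost endpoint is $b_{kR}$) while the $j$-mixed type-I cell drags in the edge $E_{jR}$ reaching to $b_{jR}$. Since $b_{kR}<b_{jR}$, these two faces cannot be nested in the required way, and the mixed subdivision simply does not exist — so the putative exotic point never arises. A secondary subtlety is ensuring that when $b_{kL}>0$ the two ``new'' vertices $(b_{kL},u-b_{kL})$ and $(0,u-b_{iL})$ are indeed extreme and not swallowed by the segment from $(b_{kR},u-b_{kR})$ to $(0,0)$; this amounts to checking the orientation of the triple $(b_{kR},u-b_{kR}),(b_{kL},u-b_{kL}),(0,u-b_{iL})$, which follows from $0<b_{kL}<b_{kR}$ together with $b_{iL}\le b_{kL}$ after the normalization of the theorem, and I would record it as a short orientation-determinant computation rather than spelling it out.
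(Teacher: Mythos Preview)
Your proposal is correct and follows the paper's proof essentially step for step: the same sourcing of the five points via Lemma~\ref{Lsame_corners} and Corollary~\ref{C_C0}, the same reduction to inequality~(\ref{EQccw}), the same dichotomy on $b_{jR}\lessgtr b_{kR}$, and the identical three-way case split on the rightmost subsegment culminating in the Lemma~\ref{Ltypes} contradiction when $b_{kR}<b_{jR}$. Your closing remark on the orientation determinant for the two ``new'' vertices is additional care not present in the paper, but it does not alter the approach.
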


\begin{example}\label{Exam:circle}
For the unit circle,
$$
x=2t/(t^2+1), ~y=(1-t^2)/(t^2+1)
$$
we have $f_0=xt^2-2t+x,~ f_1=(y+1)t^2+(y-1)$.
In lemma~\ref{Lsame_corners}, the sets $B_0=\{1\},B_1=\{0,2\},B_2=\{0,2\}$
yield terms $x^2,y^2,1$ in $\phi$ and, hence, an optimal support.
See example~\ref{Exam:circle_diff} for a treatment assuming different denominators.
\end{example}


\begin{example}\label{Exam:folium_section}
For the folium of Descartes 
$$
x=\frac{3t^2}{t^3+1} , y=\frac{3t}{t^3+1} \Rightarrow \phi=x^3 + y^3 -3xy=0 ;
$$
see example~\ref{Exam:folium_section1} and figure~\ref{Fig:desc_subd}.
Now, $B_0=\{2\},B_1=\{1\},B_2=\{0,3\}$, hence this is case~(2B).
In theorem~\ref{Tcase2a}, we set $i=0,j=1,k=2$ and obtain,
in the order stated by the theorem: $x^3, x^3, y^3, y^3, xy$, hence an optimal support.

If we do not account for the same denominators,
use degree bounds alone,
or project the Sylvester resultant, we obtain an overestimation of the support.
\end{example}

\begin{example}\label{Exam:rat_param_sameden}
$$
x=\frac{2t^3+t+1}{t^2+1} , y=\frac{t^4+t^3-1}{t^2+1} ,
$$
hence $B_0=\{0,1,3\},B_1=\{0,3,4\},B_2=\{0,2\}$, so this is case~(2A) with $B_1=[0,u]$.
In theorem~\ref{Tcase2a}, we set $i=0,j=2$ and obtain the vectors
$(e_i,e_j)=(4,0),(0,4),(0,1),(0,3),(2,0)$, in the order stated by the theorem.
This yields the implicit points $(e_0,e_1)=(4,0),(0,0),(0,3),(0,1),(2,2)$,
hence vertices $(4,0), (0,0),(0,3), (2,2)$.
These define the optimal polygon because the implicit equation is
$$
\phi=59-21x+110y+52y^2-13x^2-48xy+5x^3-5x^2y-x^4+8y^3-2x^2y^2+2x^3y-12xy^2.
$$
If we do not exploit the identical denominators, we obtain a superset of the support.
\end{example}

\begin{example}\label{Exam:samedenom_5vert}
$$
x=\frac{t^6+2t^2}{t^7+1} , y=\frac{t^4-t^3}{t^7+1} ,
$$
hence $B_0=\{2,6\},B_1=\{3,4\},B_2=\{0,7\}$, so this is case~(2A) with $B_2=[0,u]$.
In theorem~\ref{Tcase2a}, we set $i=0,j=1$ and obtain the implicit points
$(e_0,e_1)=(7,0),(0,7),(0,3),(3,1),(6,0)$, in the order stated by the theorem.
These are also the implicit vertices and define the optimal polygon
because the implicit equation is\\[-0.5em]

$
\phi= -32y^4-30x^3y^2-x^4y-12x^2y^2-3x^3y-7x^6y-2x^7+20xy^3+280x^2y^5-7^3y^4x-70x^4y^3-\\
22x^3y^3-49x^5y^2-21x^4y^2+11x^5y+216y^5+129y^7-248y^6+70xy^6+185xy^5+24y^3+100xy^4+\\
43x^2y^3+72x^2y^4+3x^6.\\[-0.5em]
$

In figure~\ref{Fig:Nf_exam_intro} is shown the implicit polygon.
Changing the coefficient of $t^2$ to -1, leads to an implicit polygon
with 6 vertices $(1,3),(0,4),(0,6),(2,5),(7,0),(4,1)$,
which is contained in the polygon predicted by theorem~\ref{Tcase2a}.
This shows the importance of the genericity condition on the coefficients of the
parametric polynomials.
\end{example}

\section{Polynomial parameterizations}\label{Spolynomial}

We consider polynomial parameterizations of curves. In this case we define polynomials
$$
f_0=x-P_0(t),~ f_1=y-P_1(t) \in (\C[x,y])[t].
$$
The supports of $f_0, f_1$ are fixed, namely $A_0=\{a_{00},a_{01},\ldots,a_{0n}\}$ and
$A_1=\{a_{10},a_{11},\ldots,a_{1m}\}$, with generic coefficients.
Here, $a_{0i}$ and $a_{1j}$ are sorted in ascending order.
Points $a_{00}, a_{10}$ are always equal to zero.
The new point set
$$
C=\kappa(A_0,A_1)=\{(a_{00},0),\ldots,(a_{0n},0),(a_{10},1),\ldots,(a_{1m},1)\},\in\N^2
$$
is introduced by the Cayley embedding.
For convenience, we shall omit the second coordinate.
Every triangulation of this set is regular,
and corresponds to a mixed cell configuration of $A_0 + A_1$.

The resultant $\RR(f_0,f_1)$ is a polynomial in $x,y,c_{ij}$,
where $c_{ij}$ are the coefficients of the polynomials $P_i$.
We consider the specialization of coefficients $c_{ij}$ in the resultant.
Generically, this specialization yields the implicit equation.
Now, $N(\phi)\subset\Z^2$ with vertices obtained from those
extreme monomials of $\RR(f_0,f_1)$ which contain coefficients of $a_{00}$ and $a_{10}$.
Since every triangle of a triangulation $T$ of $C$ corresponds to a mixed cell
of a mixed subdivision of $A_0+A_1$, we can rewrite relation (\ref{Eq:Sturmf_extreme}) as:
\begin{equation} \label{EQxyPolynomial}
\pm\prod_{i=0}^1 \prod_R c_{i,p}^{\mathrm{Vol}(R)},
\end{equation}
where $R$ is an $i$-mixed cell with vertex $p \in A_i$ and $c_{i,p}$
is the coefficient of the monomial with exponent $p\in A_i$.
After specialization of the coefficients of $f_0,f_1$, the terms
of~(\ref{EQxyPolynomial}) associated with mixed cells having a vertex $p$ other than
$a_{00}, a_{10}$ contribute only a constant to the corresponding term.
This implies that the only mixed cells that we need to consider are the ones with vertex
$a_{00}$ or $a_{10}$ (or both).
For any triangulation $T$, these mixed cells correspond to triangles  with vertices
$a_{00},a_{1l},a_{1r}$ where $l,r\in\{0,\dots,m\}$, or $a_{10},a_{0l},a_{0r}$, where $l,r\in\{0,\dots,n\}$.

The first statement below can be obtained from the degree bounds; we establish it by our
methods for completeness.

\begin{thm}\label{T:Nfpolynomial}
If $P_0$ or $P_1$ (or both) contain a constant term, then
the implicit polygon is the triangle with vertices $(0,0), (a_{1m},0), (0,a_{0n})$.
Otherwise, $P_0,P_1$ contain no constant terms, and the implicit polygon
is the quadrilateral with vertices $(a_{11},0), (a_{1m},0)$, $(0,a_{0n}), (0,a_{01}) $.

\begin{proof}
Let us consider the first statement.
To obtain vertices $(a_{1m},0)$ and $(0,0)$ consider the triangulation $T$ of $C$ obtained by drawing edge $(a_{00},a_{1m})$
(see figure~\ref{Fig:Nf_polynomial}).
The only 0-mixed cell with vertex $a_{00}$ corresponding to $T$ is $R=a_{00}+(a_{10},a_{1m})$ with volume equal to $a_{1m}$;
 there are no 1-mixed cells with vertex $a_{10}$.
The extreme monomial associated with such a triangulation is of the form $(x-c_{00})^{a_{1m}} c_{1m}^{a_{0n}}$ which,
 after specializing $c_{00}, c_{1m}$ and expanding, gives monomials in $x$ with exponents $a_{1m}, a_{1m-1},\ldots,0$.

For vertex $(0,a_{0n})$ consider triangulation $T'$  obtained by drawing edge  $(a_{10},a_{0n})$.
The only 1-mixed cell with vertex $a_{10}$  is $R=a_{10}+(a_{00},a_{0n})$ with volume equal to $a_{0n}$;
there are no 0-mixed cells with vertex $a_{00}$.
The extreme monomial is  $(y-c_{10})^{a_{0n}} c_{0n}^{a_{1m}}$ which, after specializing $c_{10}, c_{0n}$ and expanding,
 gives monomials in $y$ with exponents $a_{0n}, a_{0n-1},\ldots,0$.

To complete the proof it suffices to observe that every triangulation of $C$ having edges of the form $(a_{00},a_{1j}), ~0<j<m$ and $(a_{1j},a_{0i})$, $i>0$ leads to an extreme monomial which specializes to a polynomial in $x$ of degree $a_{1j}$. Therefore we obtain monomials  with exponents $(a_{1j},0),\ldots,(0,0)$ which lie in the interior of the triangle.
Similarly, every triangulation of $C$ having edges of the form $(a_{10},a_{0i}), ~0<i<n$ and $(a_{0i},_{1j})$, $j>0$ leads to an extreme monomial which specializes to a polynomial in $y$ of degree $a_{0i}$. Therefore we obtain monomials with exponents $(0,a_{0i}),\ldots,(0,0)$ which lie in the interior of the triangle.

The extreme monomials associated with
$a_{00}$ and $a_{10}$ are specialized to monomials of the implicit equation in $x$ or $y$ respectively,
thus not producing any constant terms.
The proof of the previous lemma implies that, when $y$'s exponent is $0$, the smallest exponent of $x$
is $a_{11}$, which is obtained by a triangulation containing edges $(a_{00},a_{11})$ and
$(a_{11},a_{0i}), ~ i>0$.
Similarly, the smallest exponent of $y$ is $a_{01}$. 
\end{proof}
\end{thm}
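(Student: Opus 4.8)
The plan is to work directly with the $\omega$-extreme monomials of $\RR(f_0,f_1)$ furnished by Proposition~\ref{P:Sturmf_extreme} and formula~(\ref{EQxyPolynomial}), and to follow their images under the coefficient specialization sending $c_{0,a_{00}}\mapsto x-c_{00}$ and $c_{1,a_{10}}\mapsto y-c_{10}$ (with $c_{00},c_{10}$ the constant terms of $P_0,P_1$, read as $0$ when absent), and $c_{i,p}\mapsto$ a nonzero generic constant for every $p\neq a_{i0}$. Since this specialization of $\RR$ equals $\phi$ up to a nonzero constant for generic coefficients, it suffices to determine its support. For a triangulation $T$ of $C=\kappa(A_0,A_1)$ (all of which are regular) write $\alpha(T)$ for the exponent of $c_{0,a_{00}}$ in~(\ref{EQxyPolynomial}), equivalently the total area of the $0$-mixed cells of $T$ with vertex $a_{00}$, and $\beta(T)$ for the exponent of $c_{1,a_{10}}$.

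The first and main step is a combinatorial analysis of triangulations of $C$: its points lie on the two lines $\{y=0\}\supset A_0$ and $\{y=1\}\supset A_1$, so every triangle has two vertices on one line and one on the other. Inspecting the cyclic order of the neighbours of $a_{00}$ in $T$, one sees that its neighbours on $\{y=1\}$ are $a_{10},a_{11},\dots,a_{1q}$ for some $q\in\{0,\dots,m\}$ (an initial segment, since otherwise some $A_1$-point would lie on an edge of a triangle at $a_{00}$), so the $0$-mixed cells with vertex $a_{00}$ are exactly $a_{00}+(a_{1,i-1},a_{1i})$ for $i=1,\dots,q$ and $\alpha(T)=a_{1q}$; symmetrically $\beta(T)=a_{0p}$ for some $p$. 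Moreover the left hull edge $(a_{00},a_{10})$ belongs to a unique triangle of $T$, and if its third vertex lies on $\{y=1\}$ then that triangle must be $a_{00}+(a_{10},a_{11})$, which makes $a_{11}$ the neighbour of $a_{10}$ immediately after $a_{00}$ and hence forces $a_{10}$ to have no neighbour on $\{y=0\}$, so $\beta(T)=0$; symmetrically $\alpha(T)=0$ if that third vertex lies on $\{y=0\}$. Thus for every $T$ exactly one of $\alpha(T),\beta(T)$ is positive, and the set $\{(\alpha(T),\beta(T)):T\}$ is precisely $V:=\{(a_{1q},0):1\le q\le m\}\cup\{(0,a_{0p}):1\le p\le n\}$, each member being realized by truncating the relevant fan. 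This dichotomy — an $a_{00}$-fan cell and an $a_{10}$-fan cell cannot coexist — is the crux; I expect it to be the only part requiring real work.

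For the containment $N(\phi)\subseteq\mathrm{CH}\{(0,0),(a_{1m},0),(0,a_{0n})\}=:\Delta$, observe that the exponent vector of any monomial of $\RR$ lies in $N(\RR)$, the convex hull of the extreme exponents, so its $(c_{0,a_{00}},c_{1,a_{10}})$-coordinates $(e,e')$ lie in $\mathrm{CH}(V)$; the specialization of that monomial is supported in the box $[0,e]\times[0,e']$, and since the four corners of this box lie in $\Delta$ (the corners $(0,0),(e,0),(0,e')$ trivially, and $(e,e')\in\mathrm{CH}(V)\subseteq\Delta$), so does the whole box. Summing over monomials gives $N(\phi)\subseteq\Delta$. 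If neither $P_i$ has a constant term, then $c_{0,a_{00}}\mapsto x$ and $c_{1,a_{10}}\mapsto y$ exactly, so each monomial of $\RR$ specializes to a single monomial $(\mathrm{const})\,x^{e}y^{e'}$ and distinct monomials of $\RR$ map to distinct monomials in $x,y$ and the generic constants; no cancellation occurs, and therefore $N(\phi)=\mathrm{CH}(V)=\mathrm{CH}\{(a_{11},0),(a_{1m},0),(0,a_{0n}),(0,a_{01})\}$, the asserted quadrilateral.

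It remains, in the constant-term case, to place the three vertices of $\Delta$ in $\mathrm{supp}(\phi)$. The triangulation through the diagonal $(a_{00},a_{1m})$ is the unique one with $\alpha=a_{1m}$; its extreme monomial is the only monomial of $\RR$ of maximal degree $a_{1m}$ in $c_{0,a_{00}}$, so its specialization contributes a nonzero constant to the coefficient of $x^{a_{1m}}$ in $\phi$, giving $(a_{1m},0)$, and the symmetric diagonal gives $(0,a_{0n})$. For $(0,0)$, the constant term of the specialization of $\RR$ is its value at $x=y=0$, i.e.\ $\pm\RR(P_0,P_1)$, which is nonzero for generic coefficients exactly when $P_0$ or $P_1$ has a nonzero constant term, i.e.\ when $P_0,P_1$ have no common root. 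Together with the upper bound, this identifies $N(\phi)$ with $\Delta$ and completes the proof. Throughout, the only genuine difficulty is the combinatorial dichotomy of the second paragraph; the rest is Cayley-trick bookkeeping together with genericity to rule out cancellation.
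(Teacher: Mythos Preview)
Your proof is correct and follows the paper's overall strategy---Cayley trick, triangulations of $C$, and formula~(\ref{EQxyPolynomial})---but it is organized more tightly and supplies several arguments the paper leaves implicit. Your explicit dichotomy (for every triangulation exactly one of $\alpha(T),\beta(T)$ is positive, forced by the unique triangle on the hull edge $(a_{00},a_{10})$) is the structural core; the paper uses this only implicitly when exhibiting the two ``diagonal'' triangulations. Your treatment of the quadrilateral case---observing that when $c_{00}=c_{10}=0$ the specialization is literally a renaming $c_{0,a_{00}}\mapsto x$, $c_{1,a_{10}}\mapsto y$, so $N(\phi)$ equals the projection $\mathrm{CH}(V)$ exactly---is cleaner than the paper's vertex-by-vertex check. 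Your argument for $(0,0)$ via $\phi(0,0)=\pm\RR(P_0,P_1)\neq 0$ generically is also sharper: the paper only expands the single extreme term $(x-c_{00})^{a_{1m}}c_{1m}^{a_{0n}}$ and does not explicitly rule out cancellation of its constant term against other contributions.

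One small imprecision: your claim that ``the triangulation through the diagonal $(a_{00},a_{1m})$ is the unique one with $\alpha=a_{1m}$'' and that it gives ``the only monomial of $\RR$ of maximal degree $a_{1m}$ in $c_{0,a_{00}}$'' is not literally true when $n\geq 2$, since the region to the right of that diagonal can be triangulated in several ways. This does not harm the conclusion: by your dichotomy, every monomial of $\RR$ with $c_{0,a_{00}}$-degree equal to $a_{1m}$ has $c_{1,a_{10}}$-degree zero, so the coefficient of $c_{0,a_{00}}^{a_{1m}}$ in $\RR$ is a nonzero polynomial in the remaining~$c_{i,p}$ and hence generically nonzero after specialization. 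You may wish to rephrase that sentence accordingly.
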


\begin{figure}[ht]
\psfrag{a0}{\tiny $a_{00}$}
\psfrag{a1}{\tiny $a_{01}$}
\psfrag{an}{\tiny $a_{0n}$}
\psfrag{b0}{\tiny $a_{10}$}
\psfrag{b1}{\tiny $a_{11}$}
\psfrag{bm}{\tiny $a_{1m}$}
\psfrag{YM}{\tiny $(0,a_{0n})$}
\psfrag{Ym}{\tiny $(0,a_{01})$}
\psfrag{XM}{\tiny $(a_{1m},0)$}
\psfrag{Xm}{\tiny $(a_{11},0)$}
\centering \includegraphics[width=0.6\textwidth]{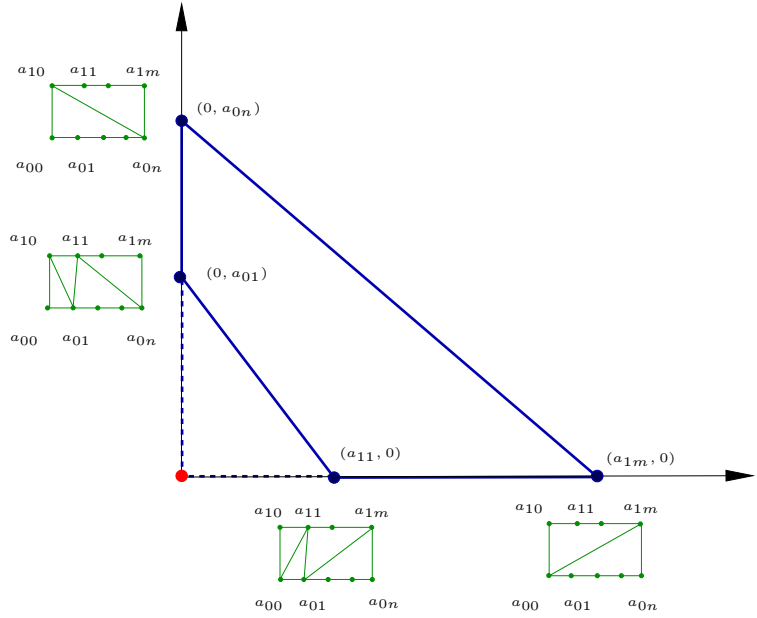}
\caption{The implicit polygon of a polynomially parameterized curve.}
\label{Fig:Nf_polynomial}
\end{figure}

Now we use~\cite[Prop.15]{GKZ90} to arrive at the following;
recall that the implicit equation is defined up to a sign.
The coefficient of $x^{a_{1m}}$ is $c(-1)^{(1+a_{0n})a_{1m}} c_{1m}^{a_{0n}}$
and that of $y^{a_{0n}}$ is $c(-1)^{a_{0n}(1+a_{1m})} c_{0n}^{a_{1m}}$,
where $c\in\{-1,1\}$.

\begin{cor}\label{Ccoef}
There exists $c\in \{-1,1\}$ s.t. the coefficient of $x^{a_{1m}}$ is $c(-c_{1m})^{a_{0n}}$ and that of $y^{a_{0n}}$ is
$c(-c_{0n})^{a_{1m}}$.
\end{cor}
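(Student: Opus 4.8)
The plan is to obtain the corollary from the two leading-coefficient formulas displayed just above it, so that the only real work is a sign normalization. Recall from the proof of Theorem~\ref{T:Nfpolynomial} that the monomial $x^{a_{1m}}$ of $\phi$ is the top-degree-in-$x$ part of the specialized $\omega$-extreme monomial attached to the triangulation of $C$ carrying the edge $(a_{00},a_{1m})$; by relation~(\ref{EQxyPolynomial}) this extreme monomial is $\pm(x-c_{00})^{a_{1m}}\,c_{1m}^{a_{0n}}$ (the outer sign absorbing the factor $(-1)^{a_{0n}}$ coming from the coefficient $-c_{1m}$ of $t^{a_{1m}}$ in $f_1$), so its $x^{a_{1m}}$-coefficient is $\pm c_{1m}^{a_{0n}}$. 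Symmetrically, the edge $(a_{10},a_{0n})$ produces the extreme monomial $\pm(y-c_{10})^{a_{0n}}\,c_{0n}^{a_{1m}}$ and a $y^{a_{0n}}$-coefficient $\pm c_{0n}^{a_{1m}}$. Feeding the combinatorial shape of these two extreme monomials of the Sylvester resultant $\RR(f_0,f_1)$ into \cite[Prop.~15]{GKZ90}, and fixing the sign ambiguity of $\RR$ (equivalently of $\phi$) once and for all by a single $c_0\in\{-1,1\}$, pins the two coefficients down as $c_0(-1)^{(1+a_{0n})a_{1m}}c_{1m}^{a_{0n}}$ and $c_0(-1)^{a_{0n}(1+a_{1m})}c_{0n}^{a_{1m}}$, exactly as recorded in the paragraph preceding the corollary.

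The next step is to rewrite each of these in the asserted form by splitting the power of $-1$ between the factor $(-c_{1m})^{a_{0n}}=(-1)^{a_{0n}}c_{1m}^{a_{0n}}$ (resp.\ $(-c_{0n})^{a_{1m}}=(-1)^{a_{1m}}c_{0n}^{a_{1m}}$) and an outer sign. This expresses the coefficient of $x^{a_{1m}}$ as $c_1(-c_{1m})^{a_{0n}}$ with $c_1:=c_0(-1)^{(1+a_{0n})a_{1m}-a_{0n}}\in\{-1,1\}$, and the coefficient of $y^{a_{0n}}$ as $c_2(-c_{0n})^{a_{1m}}$ with $c_2:=c_0(-1)^{a_{0n}(1+a_{1m})-a_{1m}}\in\{-1,1\}$.

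The only point that genuinely has to be checked — and this is the crux, since the statement demands a \emph{single} $c$ serving both monomials — is that $c_1=c_2$. Flipping the sign of any summand inside an exponent of $-1$ does not change the value, so $(1+a_{0n})a_{1m}-a_{0n}$ has the same parity as $a_{1m}+a_{0n}a_{1m}+a_{0n}$ and $a_{0n}(1+a_{1m})-a_{1m}$ has the same parity as $a_{0n}+a_{0n}a_{1m}+a_{1m}$; these two are equal, so $c_1=c_2=:c$ and the corollary follows. I expect no real obstacle: the substance already resides in the invocation of \cite[Prop.~15]{GKZ90} granted in the preceding paragraph, and what remains is routine parity bookkeeping. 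An alternative route that sidesteps the explicit signs of \cite[Prop.~15]{GKZ90} is to argue from the product expansion $\phi=\pm(-c_{1m})^{a_{0n}}\prod_{f_1(\beta)=0}(x-P_0(\beta))$ and its $y$-analogue that the two leading coefficients are $\pm(-c_{1m})^{a_{0n}}$ and $\pm(-c_{0n})^{a_{1m}}$, and then to appeal to \cite[Prop.~15]{GKZ90} only to certify that the two signs coincide.
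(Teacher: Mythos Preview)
Your proposal is correct and follows essentially the same route as the paper: the paper simply records, in the sentence preceding the corollary, the two coefficients $c(-1)^{(1+a_{0n})a_{1m}}c_{1m}^{a_{0n}}$ and $c(-1)^{a_{0n}(1+a_{1m})}c_{0n}^{a_{1m}}$ obtained from \cite[Prop.~15]{GKZ90} with a common sign $c$, and leaves the rewriting into the form $c(-c_{1m})^{a_{0n}}$, $c(-c_{0n})^{a_{1m}}$ (and the implicit parity check that the same $c$ works for both) to the reader. You have made that parity bookkeeping explicit, which is fine; nothing more is needed.
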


\begin{example}\label{Ex:poly_degenerate}
Parameterization $x=y=t$ yields implicit equation $\phi=x-y$.
Our method yields vertices $(1,0)$ and $(0,1)$ which are optimal.
\end{example}

\begin{example}
Parameterization  $x=2t^3-t+1, y=t^4-2t^2+3$ yields implicit equation
 $\phi= 608 - 136x + 569y +168y^2 -72x^2 -32xy -4x^3 -16x^2y -x^4 +16y^3$.
Our method yields the vertices $(0,0), (4,0), (0,3)$ which are optimal.
The degree bounds describe a larger quadrilateral with vertices $(0,0),(4,0),(1,3),(0,3)$.
Corollary~\ref{Ccoef} predicts, for $x^4$, coefficient $(-1)^{16} =1$,
and for $y^3$, coefficient $(-1)^{15} 2^4= -16$, up to a fixed sign
which equals $-1$ in $\phi(x,y)$.
\end{example}

\begin{example}
For the Fr\"oberg-Dickenstein example \cite[Exam.3.3]{EmiKot05},
$$
x=t^{48}-t^{56}-t^{60}-t^{62}-t^{63}, y=t^{32},
$$
our method yields vertices $(32,0), (0,48), (0,63)$, which
define the optimal polygon.
Here the degree bounds describe the larger quadrilateral with vertices
$(0,0),(32,0),(32,31), (0,63)$.
\end{example}

\begin{example}
Parameterization $x=t+t^2, y=2t-t^2$ yields implicit equation $\phi= 6x-3y+x^2+2xy+y^2$.
The previous lemma yields vertices $(1,0),(2,0),(0,2),(0,1)$, which define
the actual implicit polygon.
Here the degree bounds imply a larger triangle, with vertices $(0,0),(2,0),(0,2)$.
Corollary~\ref{Ccoef} predicts, for $x^{2}$ and $y^2$, coefficients
$(-1)^{6} (-1)^{2}=1$ and $(-1)^{6} (1)^{2}=1$ respectively.
\end{example}

\section{Rational parameterizations  with different denominators}\label{S:Rat_dif_denom}

Now we turn to the case of rationally parameterized curves, with different denominators.
We have
$$
f_0(t)=xQ_0(t)-P_0(t),\ f_1(t)=yQ_1(t)-P_1(t)\ \in (\C[x,y])[t], \, \gcd(P_i,Q_i)=1,
$$
and let $c_{ij} ~(0\leq j \leq m_i)$, ~$q_{ij} ~(0\leq j \leq k)$
denote the coefficients of polynomials $P_i(\t)$ and $Q_i(t)$, respectively.
The supports of $f_0, f_1$ are of the form $A_0=\{a_{00},a_{01},\ldots,a_{0n}\}$ and
$A_1=\{a_{10},a_{11},\ldots,a_{1m}\}$ where the $a_{0i}$ and $a_{1j}$ are sorted in ascending order;
 $a_{00} = a_{10}=0$ because  $\gcd(P_i,Q_i)=1$.
Points in $A_0, A_1$ are embedded by $\kappa$ in $\R^2$.
The embedded points are denoted by $(a_{0i},0), (a_{1i},1)$; by abusing notation, we will omit the extra coordinate.

Recall that each $p\in A_0$ corresponds to a monomial of $f_0$.
The corresponding coefficient either lies
 in $\C$, or is a monomial
$q_i x$, or a binomial $q_i x+c_{0i}$, where  $q_i,c_{0i}\in \C$.
An analogous description holds for the second polynomial.

\begin{defn}\label{D:selection}
Let $V,W$ be non-empty subsets of $\Z$.
A {\em selection} is a pair of sets $S, T$
such that $S \subseteq V$ and $T \subseteq W$.
We say that the elements of the sets $S$ and $T$ are selected, and that
the elements of $V\setminus S$ and $W\setminus T$ are non-selected.
\end{defn}

With respect to the sets~$A_0$ and $A_1$, we now define two types of
selections:
\begin{itemize}
\item
\textit{Selection1}:
the exponents in $A_0$ and $A_1$ corresponding to coefficients
which are non-constant polynomials (i.e., they are either linear monomials
or linear binomials) in $\C[x]$ and $\C[y]$, respectively, are selected.
Let $S_i \subseteq A_i$ ($i=0,1$) be the sets of the selection.
The selected exponents in $S_i$ are those in the support of the denominator $Q_i(t)$;
moreover, $|S_0| \ge 1$ and $|S_1| \ge 1$, i.e., at least one exponent from
both $A_0$ and $A_1$ is selected since $Q_i(t)\neq 0$.
\item
\textit{Selection2}:
the exponents in $A_0$ and $A_1$ corresponding to coefficients
which are monomials in $\C[x]$ and $\C[y]$, respectively, are selected.
Let $S'_i \subseteq A_i$ ($i=0,1$) be the sets of the selection.
In this case, $S'_i={\rm supp}(Q_i)\setminus {\rm supp}(P_i)$; there is
at least one non-selected exponent in $A_0$ and in $A_1$ coming from
the numerator $P_i(t)$.
\end{itemize}
In order to denote that $a_{0i} \in A_0$ or $a_{1i} \in A_1$ is selected
(non-selected, resp.), we write $a_{0i}^+$ or $a_{1i}^+$ ~($a_{0i}^-$ or $a_{1i}^-$, resp.).
For example, the case of polynomial parameterizations yields
$A_0=\{a_{00}^+,a_{01}^-,\dots,a_{0n}^-\}$, $A_1=\{a_{10}^+,a_{11}^-,\dots,a_{1m}^-\}$, under
Selection1.

We shall consider only $i$-mixed cells associated with a selected vertex in $A_i$.
For any triangulation $T$, these mixed cells correspond either to triangles with vertices
$\{a_{0i}^+,a_{1{\ell}},a_{1r}\}$, where ${\ell},r\in\{0,\dots,m\}$, or to
$\{a_{0{\ell}},a_{0r},a_{1j}^+\}$, where ${\ell},r\in\{0,\dots,n\}$.
Given a selection and a triangulation, we set
\begin{equation} \label{EQxy}
e_0=\sum_{i,\ell,r} {\rm Vol}(a_{0i}^+,a_{1{\ell}},a_{1r}),
\qquad\qquad e_1=\sum_{\ell,r,j} {\rm Vol} (a_{0{\ell}},a_{0r},a_{1j}^+),
\end{equation}
where $i,j$ range over all selected points in $A_0$ and $A_1$, respectively,
and we sum up the normalized volumes of mixed triangles.

In the following, we use the upper (lower, resp.) hull of a
convex polygon in $\R^2$ wrt some direction $v\in\R^2$.
Let us consider the unbounded convex polygons defined
by the computed upper and lower hulls. The intersection of these two unbounded polygons is
the implicit Newton polygon.

The resultant $\RR(f_0,f_1)$ is a polynomial in $x,y,c_{ij},q_k$.
We consider the specialization of coefficients $c_{ij}, q_k$ in order to
study $\phi$; this specialization yields the implicit equation.
The relevant terms are products of one polynomial in $x$ and one in $y$.
The former is the product of powers of
terms of the form $q_ix$ or $q_ix + c_{0j}$ ; the $y$-polynomial is obtained analogously.

\begin{lem}
Consider all points $(e_0,e_1)$ defined by expressions~(\ref{EQxy}).
The polygon defined by the upper hull of points $(e_0,e_1)$ under
Selection1 and the lower hull of points $(e_0,e_1)$ under Selection2
equals the implicit polygon $N(\phi)$.
\end{lem}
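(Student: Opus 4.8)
The goal is to show that $N(\phi)$ is exactly the intersection of two unbounded polygons: the one below the upper hull of the Selection1 points, and the one above the lower hull of the Selection2 points. I would organize the argument around the basic principle recalled in Proposition~\ref{P:Sturmf_extreme} together with Corollary~\ref{C:surjection-mxd-extreme}: every vertex of $N(\RR)$ is an $\omega$-extreme monomial coming from a tight mixed subdivision (equivalently, a regular triangulation of $C$ via Proposition~\ref{P:Cayley_trick}), and conversely each mixed cell configuration gives some extreme monomial of $\RR$. After specialization of the $c_{ij},q_k$, the relevant information in~\eqref{Eq:Sturmf_extreme} is only the contribution of the mixed cells whose vertex summand is a \emph{selected} point, since all other factors $c_{i,p}^{\mathrm{Vol}(R)}$ with $p$ a fixed numerator-only exponent become nonzero constants and do not affect which power of $x$ or $y$ can appear. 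This is precisely why the exponent pair $(e_0,e_1)$ attached to a triangulation is defined by~\eqref{EQxy}.

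\textbf{Step 1: the two selections govern the two ``extreme'' directions.} I would first argue that, for a lifting direction $\omega$ whose projection to the $e_0e_1$-plane points ``upward'' (maximizing $e_0+e_1$, or more precisely picking out an upper-hull vertex), the $\omega$-extreme monomial of the specialized resultant is obtained by using Selection1: a mixed cell with selected vertex $a_{0i}^+$ contributes a factor $(q_i x + c_{0i})^{\mathrm{Vol}(R)}$ or $(q_i x)^{\mathrm{Vol}(R)}$, and expanding the former produces $x$-powers ranging from $\mathrm{Vol}(R)$ down to $0$. To make the \emph{highest} total degree in $x,y$ appear we must take the top power from every such factor, which forces the coefficient $q_i\ne 0$ to be present — i.e.\ that exponent must be among those in $\mathrm{supp}(Q_i)$, which is exactly Selection1. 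Dually, for a direction selecting a lower-hull vertex (small $e_0+e_1$, i.e.\ $x$- and $y$-powers as small as possible), the monomial of smallest degree survives only when each relevant factor contributes its constant term $c_{0i}$; a factor contributes a \emph{nonzero} constant only when it is a pure monomial $q_i x$ — no, rather the opposite: only binomials $q_ix+c_{0i}$ can contribute a constant, while a pure monomial $q_i x$ forces at least one $x$. Hence to minimize we must use precisely the monomial vertices, which is Selection2. I would make this dichotomy precise and note that the two hulls are ``facing'' opposite half-planes, so their intersection is a bounded polygon.

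\textbf{Step 2: every vertex of $N(\phi)$ lies on one of the two hulls, and conversely.} Using Corollary~\ref{C:surjection-mxd-extreme}, each vertex $v$ of $N(\phi)$ is the image under specialization of some extreme monomial of $\RR$, hence arises from some triangulation $T$ of $C$; its $x,y$-exponents are $\le (e_0,e_1)(T)$ coordinatewise when we extract an extremal (maximal) term, and $\ge$ when we extract a minimal one. Picking the supporting line of $N(\phi)$ at $v$ pointing into the ``upper'' region shows $v$ is on or below the Selection1 upper hull; picking one pointing ``downward'' shows $v$ lies on or above the Selection2 lower hull. Since $v$ is simultaneously extreme in both senses only at the boundary, $v$ must actually lie on one of the two hulls. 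For the reverse inclusion I would take an arbitrary vertex $w$ of the Selection1 upper hull, exhibit the triangulation realizing it, and check that expanding the corresponding product $\prod (q_ix+c_{0i})^{\mathrm{Vol}(R)}\prod(\cdots)$ really produces the monomial $x^{e_0}y^{e_1}$ with nonzero coefficient (all top coefficients are products of powers of the $q_i$, hence generically nonzero), so $w\in A(\phi)$ and being a hull vertex it is a vertex of $N(\phi)$; symmetrically for Selection2, where the surviving term has coefficient a product of the $c_{0j}$'s.

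\textbf{Main obstacle.} The delicate point is Step~2's claim that no cancellation destroys the extremal monomials after specialization. A priori, the resultant is a \emph{sum} over many triangulations, and several of them could contribute to the same exponent $(e_0,e_1)$ with coefficients that cancel. I would handle this exactly as in the polynomial case (proof of Theorem~\ref{T:Nfpolynomial}) and as in~\cite{GKZ90}: on the extreme hull vertices the contributing triangulation is essentially unique, and the leading (resp.\ trailing) coefficient of the associated specialized product is a \emph{monomial} in the generic parameters $q_i$ (resp.\ $c_{0j}$) — a single nonzero term with no room to cancel. Establishing this uniqueness, and confirming the support inclusions in~\eqref{EQxy} are tight on the hulls (points strictly inside either hull are dominated, which is the content of the earlier lemmas such as Theorem~\ref{Tnoint}-style arguments adapted to the Cayley/triangulation picture), is where the real work lies; the rest is bookkeeping with orientation determinants of the kind already used in~\eqref{EQccw} and~\eqref{EQccw2a}.
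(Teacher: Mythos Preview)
Your approach is essentially the same as the paper's: both argue that after specialization only the $i$-mixed cells with selected vertex in $A_i$ affect the $x,y$-exponents, that Selection1 captures the maximal attainable degrees (upper hull) while Selection2 captures the degrees forced by the pure-monomial coefficients (lower hull), and that genericity of the parametric coefficients rules out cancellation at the extremal terms. The paper's own proof is in fact considerably shorter and more informal than your outline---it simply states these observations and invokes genericity directly, without your Step~2 verification or the uniqueness-of-contributing-triangulation argument you sketch for non-cancellation.
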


\begin{proof}
Consider the extreme terms of the resultant,
given by thm~\ref{P:Sturmf_extreme} and expression (\ref{EQxyPolynomial}).
After the specialization of the coefficients, those
associated with $i$-mixed cells having a non-selected vertex $p\in A_i$
contribute only a coefficient in $\C$ to the corresponding term of $\phi$.
This is why they are not taken into account in~(\ref{EQxy}).

Now consider Selection1.
By maximizing $e_0$ or $e_1$, as defined in~(\ref{EQxy}), it is clear that
we shall obtain the maximum possible exponents in the terms which are
polynomials in $x$ and $y$ respectively, hence the largest degrees in $x,y$ in $\phi$.
Under certain genericity assumptions, we shall obtain all vertices in the implicit polygon,
which appear in its upper hull with respect to vector $(1,1)$.
If genericity fails, the implicit polygon will contain vertices with smaller coordinates.

Selection2 minimizes the powers of coefficients corresponding
to monomials in the implicit variables.
All other coefficients are in $\C$ or are binomials in $x$ (or $y$), so
they contain a constant term, hence their product will contain a
constant, assuming generic coefficients in the parametric equations.
Therefore these are vertices on the lower hull with respect to $(1,1)$.
If genericity fails, then fewer terms appear in $\phi$ and the implicit polygon
is interior to the lower hull computed.
\end{proof}

\subsection{The implicit vertices}

For a set $P$ and any $p \in P$, we define functions $\mathcal{X}(p^+)$  and $\mathcal{X}(P^-)$  where
$\mathcal{X}(p^+)=1$ if $p$ is selected and $\mathcal{X}(p^+)=0$ otherwise,
and $\mathcal{X}(P^-)=1$ if there exists some non-selected point $p^- \in P$ and $\mathcal{X}(P^-)=0$  otherwise.
Function $\mathcal{X}(P^-)$ satisfies
${\mathcal X}(P^-) = 1 -\prod_i {\mathcal X}(p_i^+)$.
Recall that $a_{00}=a_{10}=0$; nevertheless, we still use $a_{00},a_{10}$ for generality.

The following two lemmas describe the upper hull defined by expressions~(\ref{EQxy}).

\begin{lem}\label{L:rational_x-max,y-max}
The maximum exponent  of $x$ in the implicit equation is
$e_0^{max}=a_{1m} - a_{10}.$
When this is attained, the maximum exponent of $y$ is
$$e_1^{max}|_{e_0^{max}}= (a_{0R}^+ - a_{0L}^+) + \mathcal{X}(a_{10}^+)\cdot(a_{0L}^+ - a_{00}) + \mathcal{X}(a_{1m}^+)\cdot(a_{0n} - a_{0R}^+),$$
where $a_{0R}^+, a_{0L}^+$ are the rightmost and leftmost selected points
(not necessarily distinct) in $A_0$, with respect to Selection1.
A symmetric result holds for $e_0^{max}|_{e_1^{max}}$.
\begin{proof}
There always is at least one selected point $a_{0j}^+ \in A_0$ and $a_{1i}^+ \in A_1$.
This implies that the maximum exponent of $x$ is equal to $a_{1m} - a_{10}$ and
is attained by the triangulation with edges $(a_{0j}^+,a_{10}), (a_{0j}^+,a_{1m})$.
Then, the maximum exponent of $y$ is attained
from any triangulation such that a maximum part
of segment $(a_{00},a_{0n})$ is visible from some selected points in $A_1$.
Such a triangulation must contain edges $(a_{0L}^+,a_{10})$ and $(a_{0R}^+,a_{1m})$
(see Figure~\ref{Fig:max_xy_rat}).

Assume that  $\mathcal{X}(a_{10}^+)=1$.
If all other  selected points in $A_1$ (if any) lie inside $(a_{10},a_{1m})$, then $\mathcal{X}(a_{1m}^+)=0$;
the maximum exponent of $y$ is $a_{0R}^+-a_{00}$; it is obtained by drawing edge $(a_{10},a_{0R}^+)$.
If  $a_{1m}$ is also selected, then $\mathcal{X}(a_{1m}^+)=1$
and segment $(a_{0R}^+,a_{0n})$ is also visible from selected points in $A_1$ (namely $a_{1m}$)
hence the maximum exponent of $y$ is  $(a_{0R}^+ - a_{00}) + (a_{0n} - a_{0R}^+) = a_{0n} - a_{00}$.

Assume that $\mathcal{X}(a_{10}^+)=0$.
If all  selected points in $A_1$ lie inside $(a_{10},a_{1m})$, then $\mathcal{X}(a_{1m}^+)=0$
and the maximum exponent of $y$ is $a_{0R}^+-a_{0L}^+$.
It is obtained by drawing edges  $(a_{1i}^+,a_{0L}^+), (a_{1i}^+,a_{0R}^+)$, from some selected point $a_{1i}^+ \in A_1$.
If  $\mathcal{X}(a_{1m}^+)=1$,  segment $(a_{0R}^+,a_{0n})$ is also visible
from selected points in $A_1$  (namely $a_{1m}$)
hence the maximum exponent of $y$ is  $(a_{0R}^+ -a_{0L}^+) + (a_{0n} - a_{0R}^+) = a_{0n} - a_{0L}^+$.
\end{proof}
\end{lem}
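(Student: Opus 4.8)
The plan is to encode each triangulation of $C$ as a monotone lattice path and then solve a short optimization over these paths. All $n+m+2$ points of $C$ lie on the boundary of $\mathrm{CH}(C)$ (those of $A_0$ on its lower edge, those of $A_1$ on its upper edge), so the only edges of a triangulation $T$ of $C$ internal to $A_0$ or to $A_1$ are the forced consecutive ones, and $T$ is determined by its pairwise non-crossing $A_0$--$A_1$ diagonals. Reading these diagonals from left to right exhibits $T$ as a staircase of $n+m$ triangles: $m$ of them of the form $\{a_{0i},a_{1,\ell-1},a_{1\ell}\}$, one per upper edge, and $n$ of the form $\{a_{0,k-1},a_{0k},a_{1j}\}$, one per lower edge. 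Equivalently $T$ corresponds to a lattice path $\pi$ from $(0,0)$ to $(n,m)$: its $k$-th horizontal step is the triangle on $(a_{0,k-1},a_{0k})$, whose $A_1$-apex is $a_{1h}$ with $h$ the height of $\pi$ during that step, and its $\ell$-th vertical step is the triangle on $(a_{1,\ell-1},a_{1\ell})$, whose $A_0$-apex is $a_{0g}$ with $g$ the abscissa of $\pi$ during that step. In these terms, $e_0$ is the total length of the upper edges whose vertical step is taken at an abscissa $g$ with $a_{0g}$ selected, and $e_1$ is the total length of the lower edges whose horizontal step is taken at a height $h$ with $a_{1h}$ selected.

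The first claim, $e_0^{\max}=a_{1m}-a_{10}$, is then immediate: the upper edges partition $[a_{10},a_{1m}]$, so $e_0\le a_{1m}-a_{10}$, with equality attained by any $\pi$ whose vertical steps all occur at the abscissa of one fixed selected point $a_{0j}^+\in A_0$ (one exists, $|S_0|\ge1$). Moreover $e_0=e_0^{\max}$ holds \emph{exactly} for the paths all of whose vertical steps occur at the abscissa of a selected point of $A_0$; call these \emph{admissible}. For an admissible path no climbing occurs at an abscissa smaller than that of $a_{0L}^+$ or larger than that of $a_{0R}^+$, so the horizontal steps covering $[a_{00},a_{0L}^+]$ run at height $0$, those covering $[a_{0R}^+,a_{0n}]$ run at height $m$, and the rest cover $[a_{0L}^+,a_{0R}^+]$. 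Hence their contributions to $e_1$ are at most $\mathcal X(a_{10}^+)\,(a_{0L}^+-a_{00})$, $\mathcal X(a_{1m}^+)\,(a_{0n}-a_{0R}^+)$ and $a_{0R}^+-a_{0L}^+$ respectively, which sum to the asserted bound on $e_1^{\max}|_{e_0^{\max}}$.

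To see the bound is attained, pick any selected $a_{1s}\in A_1$ ($|S_1|\ge1$) and take the admissible path that climbs $s$ units at the abscissa of $a_{0L}^+$ and the remaining $m-s$ units at the abscissa of $a_{0R}^+$: its horizontal steps over $[a_{0L}^+,a_{0R}^+]$ all run at height $s$, hence contribute the entire length $a_{0R}^+-a_{0L}^+$, while the left and right horizontal steps realize the two $\mathcal X(\cdot)$ terms; and since all climbing is at selected abscissas, this path is admissible. The statement for $e_0^{\max}|_{e_1^{\max}}$ follows by interchanging the roles of $A_0$ and $A_1$. The one delicate point is the bijection between triangulations of $C$ and lattice paths, together with the exact identification of ``$e_0=e_0^{\max}$'' with admissibility; the degenerate situations ($m=0$, or $a_{0L}^+=a_{0R}^+$, or $a_{00}$ or $a_{0n}$ itself selected) are harmless, as the displayed formula then collapses to the obvious value.
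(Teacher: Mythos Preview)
Your proof is correct and takes a genuinely different route from the paper. The paper argues by a four-way case split on whether $a_{10}$ and $a_{1m}$ are selected, in each case exhibiting a triangulation that realises the claimed value of $e_1$; optimality is asserted via an informal ``visibility'' argument (``the maximum exponent of $y$ is attained from any triangulation such that a maximum part of segment $(a_{00},a_{0n})$ is visible from some selected points in $A_1$'') rather than proved directly. Your lattice-path encoding, by contrast, turns the problem into a transparent optimisation: the characterisation of $e_0=e_0^{\max}$ as exactly the admissible paths gives a clean constraint, the three-block decomposition of the horizontal steps immediately yields the upper bound as a sum, and a single explicit path witnesses attainment without case analysis. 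What your approach buys is a uniform argument that handles all four cases at once and makes the upper-bound direction explicit; what the paper's approach buys is perhaps a more geometric picture tied to the figures, at the cost of leaving the optimality step somewhat implicit. The bijection you invoke between triangulations of two parallel point rows and monotone lattice paths is standard, and the paper itself notes (in the polynomial section) that every triangulation of such a $C$ is regular, so no subtlety arises there.
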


%
\begin{figure}[ht]
\psfrag{a0}{\footnotesize $a_{00}$} \psfrag{ai}{\footnotesize $a_{0i}^+$} \psfrag{an}{\footnotesize $a_{0n}$}
\psfrag{an+}{\footnotesize $a_{0n}^+$} \psfrag{an-}{\footnotesize $a_{0n}^-$}
\psfrag{aR}{\footnotesize $a_{0R}^+$} \psfrag{aL}{\footnotesize $a_{0L}^+$}
\psfrag{bi}{\footnotesize $a_{1i}^+$} \psfrag{b0}{\footnotesize $a_{10}$} \psfrag{bm}{\footnotesize $a_{1m}$}
\psfrag{bm-}{\footnotesize $a_{1m}^-$} \psfrag{bm+}{\footnotesize $a_{1m}^+$}
\psfrag{bR}{\footnotesize $a_{1R}^+$} \psfrag{bL}{\footnotesize $a_{1L}^+$}
\centering \includegraphics[width=0.8\textwidth]{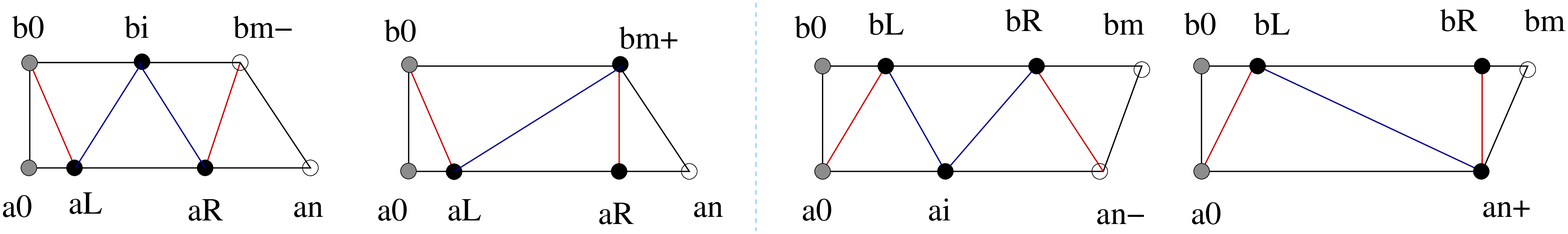}
\caption{The triangulations of $C$ giving vertices $e_1^{max} |_{e_0^{max}}$  (left subfigure)
and $e_0^{max} |_{e_1^{max}}$ (right subfigure).}
\label{Fig:max_xy_rat}
\end{figure}

\begin{lem}\label{L:rational_x-max,y-min}
Suppose that the maximum exponent $e_0^{max}$ of $x$ equal to $a_{1m} - a_{10}$ is attained;
then the minimum exponent of $y$ is
$$
e_1^{min}|_{e_0^{max}} = \mathcal{X}(a_{10}^+)\cdot(a_{0L}^+ - a_{00}) + \mathcal{X}(a_{1m}^+)\cdot(a_{0n} - a_{0R}^+) +
 (1-\mathcal{X}(A_1^-))\cdot (a_{0R}^+ - a_{0L}^+)
$$
where $a_{0R}^+, a_{0L}^+$ are the rightmost and leftmost selected points in $A_0$ with respect to Selection1.
A symmetric result holds for $e_0^{min}|_{e_1^{max}}$.
\begin{proof}
To attain the maximum exponent of $x$ equal to $a_{1m} - a_{10}$,
we have to draw edges $(a_{0i}^+,a_{10}), (a_{0i}^+,a_{1m})$, where $a_{0i}^+$ is some selected point in $A_0$.
An analogous  reasoning as before asks for the minimization of the segment of $(a_{00},a_{0n})$
which is visible from selected points in $A_1$.
We can minimize this segment by drawing edges from non-selected points (if any)
 $a_{1i}^- \in A_1$ to the leftmost and rightmost selected points in $A_0$.
The rest of the proof is similar to that of lemma \ref{L:rational_x-max,y-max}.
\end{proof}
\end{lem}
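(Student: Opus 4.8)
The plan is to mirror the structure of the proof of Lemma~\ref{L:rational_x-max,y-max}, but now \emph{minimizing} rather than maximizing the portion of the segment $(a_{00},a_{0n})$ that is ``seen'' by selected points of $A_1$ in a triangulation of $C$ that already realizes $e_0^{max}=a_{1m}-a_{10}$. First I would recall, exactly as in the previous lemma, that forcing the maximal $x$-exponent pins down two edges of the triangulation: $(a_{0i}^+,a_{10})$ and $(a_{0i}^+,a_{1m})$ for some selected $a_{0i}^+\in A_0$. These edges create a single $0$-mixed triangle with vertex $a_{0i}^+$ and edge summand $(a_{10},a_{1m})$, of volume $a_{1m}-a_{10}$; the contribution to $e_1$ then comes entirely from the $1$-mixed triangles stacked on the chain from $a_{00}$ to $a_{0n}$, each having a vertex in $A_1$ and an edge summand in $A_0$.

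Next I would argue that the total $e_1$ contributed by selected points of $A_1$ equals the length of the union of the sub-segments of $(a_{00},a_{0n})$ that appear as edge summands of $1$-mixed triangles whose apex is a \emph{selected} point of $A_1$. To minimize this, one wants to route as much of the chain $(a_{00},a_{0n})$ as possible through \emph{non-selected} apices in $A_1$. Here the case analysis splits exactly along the values of $\mathcal{X}(a_{10}^+)$, $\mathcal{X}(a_{1m}^+)$ and $\mathcal{X}(A_1^-)$: the sub-segments $(a_{00},a_{0L}^+)$ and $(a_{0R}^+,a_{0n})$ are forced to be seen from selected points precisely when $a_{10}$, resp.\ $a_{1m}$, is selected (because the boundary edges $(a_{0L}^+,a_{10})$ and $(a_{0R}^+,a_{1m})$ are then edges of the triangulation realizing $e_0^{max}$, so the adjacent triangles are $1$-mixed with those selected apices); and the middle sub-segment $(a_{0L}^+,a_{0R}^+)$ can be covered by triangles with a non-selected apex only if $\mathcal{X}(A_1^-)=1$, otherwise every apex in $A_1$ is selected and the whole middle piece contributes $a_{0R}^+-a_{0L}^+$. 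Adding the three contributions gives the claimed formula $e_1^{min}|_{e_0^{max}}=\mathcal{X}(a_{10}^+)(a_{0L}^+-a_{00})+\mathcal{X}(a_{1m}^+)(a_{0n}-a_{0R}^+)+(1-\mathcal{X}(A_1^-))(a_{0R}^+-a_{0L}^+)$, and the symmetric statement for $e_0^{min}|_{e_1^{max}}$ follows by exchanging the roles of $A_0$ and $A_1$.

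The step I expect to be the main obstacle is the lower-bound direction: showing that no triangulation realizing $e_0^{max}$ can do \emph{better} (i.e.\ see even less of $(a_{00},a_{0n})$ from selected apices) than the construction above. One must check that the forced edges $(a_{0L}^+,a_{10})$ and $(a_{0R}^+,a_{1m})$—where $a_{0L}^+,a_{0R}^+$ are by definition the extreme \emph{selected} points of $A_0$—genuinely force the end sub-segments to be covered by selected apices whenever $a_{10}$ or $a_{1m}$ is selected, and that when $a_{10}^+$ (say) is \emph{not} selected one can always slide those triangles onto a non-selected apex while keeping a valid triangulation; this is the same ``visibility'' bookkeeping as in Lemma~\ref{L:rational_x-max,y-max} and can be invoked with only minor changes, which is why the authors write ``the rest of the proof is similar.''
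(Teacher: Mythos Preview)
Your proposal is correct and follows essentially the same approach as the paper's own proof, which is itself very terse (it explicitly defers to Lemma~\ref{L:rational_x-max,y-max}). You correctly identify the construction via edges from non-selected points $a_{1i}^-\in A_1$ to $a_{0L}^+,a_{0R}^+$, the three-term case split governed by $\mathcal{X}(a_{10}^+),\mathcal{X}(a_{1m}^+),\mathcal{X}(A_1^-)$, and you rightly flag the lower-bound direction as the only nontrivial point. One small wording issue: the edges $(a_{0L}^+,a_{10})$ and $(a_{0R}^+,a_{1m})$ are not literally \emph{forced} in every triangulation realizing $e_0^{\max}$; what is forced is that the first (resp.\ last) vertical step occurs at some selected $A_0$-position $\ge a_{0L}^+$ (resp.\ $\le a_{0R}^+$), and this is what makes the end sub-segments $(a_{00},a_{0L}^+)$ and $(a_{0R}^+,a_{0n})$ visible from $a_{10}$ and $a_{1m}$ respectively. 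You essentially say this in your final paragraph, so the plan goes through.
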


\begin{figure}[ht]
\psfrag{a0}{\footnotesize $a_{00}$}
\psfrag{ai}{\footnotesize $a_{0i}^-$}
\psfrag{an}{\footnotesize $a_{0n}$}
\psfrag{an+}{\footnotesize $a_{0n}^+$}
\psfrag{an-}{\footnotesize $a_{0n}^-$}
\psfrag{aR}{\footnotesize $a_{0R}^+$}
\psfrag{aL}{\footnotesize $a_{0L}^+$}
\psfrag{bi}{\footnotesize $a_{1i}^-$}
\psfrag{b0}{\footnotesize $a_{10}$}
\psfrag{bm}{\footnotesize $a_{1m}$}
\psfrag{bm-}{\footnotesize $a_{1m}^-$}
\psfrag{bm+}{\footnotesize $a_{1m}^+$}
\psfrag{bR}{\footnotesize $a_{1R}^+$}
\psfrag{bL}{\footnotesize $a_{1L}^+$}
\centering \includegraphics[width=0.8\textwidth]{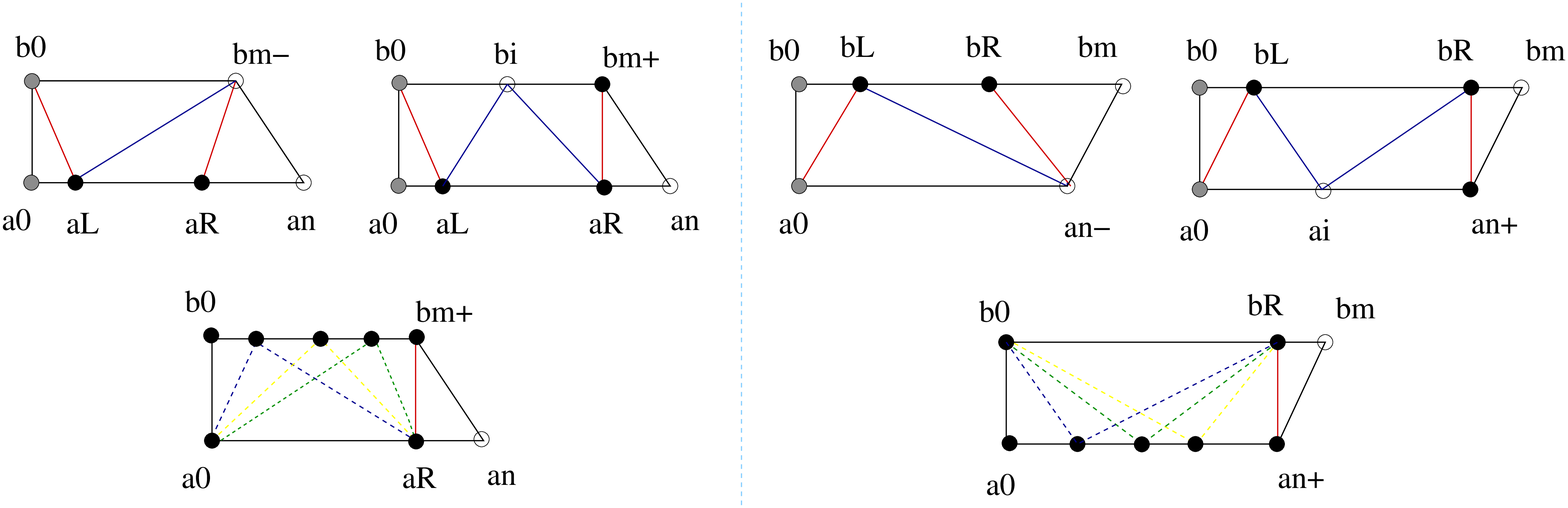}
\caption{The triangulations of $C$ that give the points $e_1^{min} |_{e_0^{max}}$  (left subfigure)
 and $e_0^{min} |_{e_1^{max}}$ (right subfigure).}
\label{Fig:min_xy_rat}
\end{figure}

Now we describe the lower hull defined by expressions~(\ref{EQxy}).

\begin{lem}\label{L:second-criterion-special}
Consider Selection2 and suppose that no point in $A_0$ is selected.
Then,
\begin{itemize}
\item[(i)]
if no point in $A_1$ is selected, the lower hull contains only vertex~$(0,0)$;
\item[(ii)]
if there exists at least one selected and at least one non-selected point
in $A_1$, the lower hull contains only vertices $(0,0)$ and $(0, a_{0n} - a_{00})$.
\end{itemize}
\end{lem}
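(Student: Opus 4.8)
The plan is to evaluate directly the point cloud $\{(e_0,e_1)\}$ produced by expressions~(\ref{EQxy}) under Selection2 and then read off its lower hull. Since by hypothesis no point of $A_0$ is selected, the first sum in~(\ref{EQxy}) is empty, so $e_0=0$ for every triangulation of $C$; hence the whole cloud lies on the line $e_0=0$. This already settles part~(i): if moreover no point of $A_1$ is selected, the second sum in~(\ref{EQxy}) is empty as well, the only attainable point is $(0,0)$, and it is the unique vertex of the (degenerate) lower hull.

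For part~(ii) I would first determine the range of $e_1$. As observed in section~\ref{Spolynomial}, $C=\kappa(A_0,A_1)$ consists of the points of $A_0$ placed on one line and those of $A_1$ on a parallel line; because $A_0,A_1$ are one-dimensional, every triangulation of $C$ is regular, so the relevant cloud really is indexed by all triangulations. In an arbitrary triangulation $T$, the bottom edge $[a_{00},a_{0n}]$ of $\mathrm{CH}(C)$ is cut into the consecutive unit edges $(a_{0\ell},a_{0,\ell+1})$, $\ell=0,\dots,n-1$, and the unique triangle of $T$ lying above each such edge has its third vertex in $A_1$; these $n$ triangles are precisely the $1$-mixed cells of $T$, with normalized volumes $a_{0,\ell+1}-a_{0\ell}$ summing to $a_{0n}-a_{00}$. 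Thus $e_1=\sum(a_{0,\ell+1}-a_{0\ell})$, the sum ranging over those $\ell$ whose apex belongs to $S'_1$, and in particular $0\le e_1\le a_{0n}-a_{00}$.

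It then remains to realize the two extreme values. Fix any $a_{1w}\in A_1$ and take the triangulation whose central fan joins $a_{1w}$ to every point of $A_0$ (tiling the triangle with vertices $a_{00},a_{0n},a_{1w}$), while the two remaining regions are filled by the fans from $a_{00}$ and from $a_{0n}$ to the points of $A_1$; this is a genuine triangulation of $C$, and its only $1$-mixed cells have apex $a_{1w}$, so $e_1=0$ when $w$ is taken non-selected and $e_1=a_{0n}-a_{00}$ when $w$ is taken selected. By hypothesis both choices are available, so the Selection2 cloud is a subset of the segment $\{0\}\times[0,\,a_{0n}-a_{00}]$ that contains both its endpoints; its lower hull therefore has exactly the vertices $(0,0)$ and $(0,a_{0n}-a_{00})$, which together with the preceding lemma yields the claim. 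The one place needing care — and the only real obstacle — is the combinatorial bookkeeping for triangulations of points lying on two parallel lines: that no vertex ever sits in the interior of an edge, that each unit edge on the bottom boundary supports exactly one triangle, and hence that the $1$-mixed cells tile $[a_{00},a_{0n}]$; this is what both pins the range of $e_1$ and certifies that the fan triangulations used above are well defined.
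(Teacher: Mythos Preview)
Your argument is correct and is precisely the kind of direct computation the paper gestures at: the paper gives no proof of this lemma beyond the sentence ``It is not difficult to see that the lemma holds,'' so there is nothing substantive to compare. Your observation that $e_0\equiv 0$ under the hypothesis, the tiling of $[a_{00},a_{0n}]$ by the bases of the $1$-mixed triangles, and the explicit fan triangulations realizing the two extremes of $e_1$ together constitute the natural verification; the only cosmetic point is that ``unit edges'' is a slight misnomer for the segments $(a_{0\ell},a_{0,\ell+1})$, and the closing clause ``together with the preceding lemma'' is unnecessary since the statement to be proved concerns only the lower hull of the $(e_0,e_1)$ cloud, not $N(\phi)$ itself.
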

\noindent
It is not difficult to see that the lemma holds.
A similar result holds if no point in $A_1$ is selected.

In the following, we assume that there exists at least one selected point
in each of the sets~$A_0$ and $A_1$.
Moreover, since we consider Selection2, there exists at least one non-selected
point in each of $A_0$ and $A_1$ as well.

\begin{lem}\label{L:rational_y-min,x-max}
When the exponent of $x$ attains its minimum value $e_0^{min} = 0$,
the maximum exponent of $y$ is
$$
e_1^{max}|_{e_0^{min}}
\  = \  a_{0R}^- - a_{0L}^-
+ \mathcal{X}(a_{10}^+) \cdot (a_{0L}^- - a_{00})
+ \mathcal{X}(a_{1m}^+) \cdot (a_{0n} - a_{0R}^-),
$$
where $a_{0L}^-, a_{0R}^-$ are the leftmost and rightmost non-selected points
in $A_0$ under Selection2.
A symmetric result holds for $e_0^{max}|_{e_1^{min}}$.
\end{lem}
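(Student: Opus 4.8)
The plan is to mirror the structure of the proofs of Lemmas~\ref{L:rational_x-max,y-max} and~\ref{L:rational_x-max,y-min}, but working with Selection2 instead of Selection1 and exchanging the roles of the two selections' "extreme" points. First I would observe that forcing the exponent of $x$ to its minimum value $e_0^{min}=0$ means that in the triangulation of $C$ \emph{no} mixed triangle has a selected vertex of $A_0$; equivalently, the part of the segment $(a_{00},a_{0n})$ that is "seen" by the selected points of $A_0$ must be empty, so every edge emanating from a selected point of $A_0$ toward $A_1$ is dominated (from below, in the Cayley lifting) by edges emanating from non-selected points of $A_0$. Concretely this is achieved by joining each selected point $a_{0i}^+$ only to its immediate non-selected neighbours, so that no $a_{0i}^+$ is a vertex of a mixed triangle; such a triangulation exists precisely because there is at least one non-selected point in $A_0$ (our standing assumption in this subsection).

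Next I would maximize $e_1$, the total normalized volume of mixed triangles with a selected apex in $A_1$, subject to the above constraint. As in Lemma~\ref{L:rational_x-max,y-max}, $e_1$ equals the length of the portion of $(a_{00},a_{0n})$ visible from selected points of $A_1$; the difference here is that the two edges that fix the visible window must now land on \emph{non-selected} points of $A_0$, because a selected point of $A_0$ is no longer available as a triangle vertex. Hence the widest window one can open has its left end at the leftmost non-selected point $a_{0L}^-$ and its right end at the rightmost non-selected point $a_{0R}^-$, contributing the base term $a_{0R}^- - a_{0L}^-$. This is realized by drawing edges $(a_{1i}^+, a_{0L}^-)$ and $(a_{1i}^+, a_{0R}^-)$ from some selected $a_{1i}^+ \in A_1$.

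Then I would treat the two "end corrections" exactly as in the earlier lemmas. If $a_{10}$ is itself selected (so $\mathcal{X}(a_{10}^+)=1$), then the subsegment $(a_{00},a_{0L}^-)$ also becomes visible — from $a_{10}$ — via the edge $(a_{10},a_{0L}^-)$, adding $a_{0L}^- - a_{00}$; if $a_{10}$ is non-selected this subsegment cannot be exposed without creating a mixed triangle on a selected $A_0$-vertex, so it contributes nothing, which is recorded by the factor $\mathcal{X}(a_{10}^+)$. Symmetrically, $\mathcal{X}(a_{1m}^+)$ controls whether $(a_{0R}^-, a_{0n})$ is visible from $a_{1m}$, adding $a_{0n}-a_{0R}^-$. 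Summing the three contributions gives the stated formula for $e_1^{max}|_{e_0^{min}}$, and the symmetric statement for $e_0^{max}|_{e_1^{min}}$ follows by swapping the roles of $A_0$ and $A_1$. The main obstacle, as in the Selection1 lemmas, is the bookkeeping of the boundary cases (whether $a_{10}$, $a_{1m}$ are selected, and whether $a_{0L}^-=a_{00}$ or $a_{0R}^-=a_{0n}$, in which case some correction terms vanish automatically); I would handle this by a short case split checking that the $\mathcal{X}$-factors collapse the formula correctly in each degenerate configuration, together with an argument that no triangulation can expose more of $(a_{00},a_{0n})$ than claimed without violating $e_0=0$.
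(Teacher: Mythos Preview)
Your proposal is correct and follows essentially the same approach as the paper, which simply states that the proof is similar to that of Lemma~\ref{L:rational_x-max,y-max} with the focus shifted from selected to non-selected points of $A_0$. Your write-up is in fact more detailed than the paper's one-line justification; the only imprecision is the phrase ``a selected point of $A_0$ is no longer available as a triangle vertex''---such a point can still appear as an endpoint of the base edge of a $1$-mixed cell, just not as the apex of a $0$-mixed cell---but this does not affect the argument.
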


\noindent
The proof of lemma~\ref{L:rational_y-min,x-max} is similar to the proof of
lemma~\ref{L:rational_x-max,y-max}; the only difference is that we focus
on the non-selected points instead of the selected points in $A_0$.

\begin{lem}\label{L:rational_x-min,y-min}
When the exponent of $x$ attains its minimum value $e_0^{min} = 0$,
the minimum exponent of $y$ is
$$
e_1^{min}|_{e_0^{min}} = \mathcal{X}(a_{10}^+) \cdot (a_{0L}^- - a_{00})
+ \mathcal{X}(a_{1m}^+) \cdot (a_{0n} - a_{0R}^-),
$$
where $a_{0L}^-, a_{0R}^-$ are the leftmost and rightmost non-selected points in $A_0$
under Selection2.
A symmetric result holds for $e_0^{min}|_{e_1^{min}}$.
\end{lem}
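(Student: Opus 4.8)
The plan is to mirror the argument used for Lemma~\ref{L:rational_x-max,y-max} and its companion Lemma~\ref{L:rational_x-min,y-min}'s ``max'' analogue (Lemma~\ref{L:rational_y-min,x-max}), but now combining the Selection2 requirement ``minimize the $x$-exponent'' with ``minimize the $y$-exponent'' simultaneously. First I would fix that $e_0^{min}=0$ is attained. By the discussion following expression~(\ref{EQxy}), under Selection2 the $0$-mixed cells that matter are triangles $\{a_{0i}^+,a_{1\ell},a_{1r}\}$ with $a_{0i}$ selected; since we have assumed at least one non-selected point exists in $A_0$, there is freedom to build a triangulation in which every selected point of $A_0$ is ``shielded'', i.e. lies strictly below an edge joining two points of $A_0$, so that no such mixed triangle has positive volume. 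Concretely, draw the edge $(a_{00},a_{0n})$ of the lower Cayley hull; then all of $A_0$ including every selected point sits on or below this segment, and any triangle with an apex in $A_0$ and base in $A_1$ contributing to $e_0$ can be avoided. This gives $e_0=0$.

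Next, with $e_0=0$ forced, I would minimize $e_1=\sum {\rm Vol}(a_{0\ell},a_{0r},a_{1j}^+)$ over triangulations realizing $e_0=0$. The key observation, exactly as in Lemma~\ref{L:rational_y-min,x-max}, is that $e_1$ equals the total length of the portion of the segment $(a_{00},a_{0n})$ that is ``visible'' (in the Cayley picture, lies on a lower edge whose third vertex is a selected point of $A_1$). To minimize this visible length we route edges from \emph{non-selected} points of $A_1$ to the leftmost and rightmost \emph{non-selected} points of $A_0$, namely $a_{0L}^-$ and $a_{0R}^-$; this is consistent with keeping $e_0=0$ because those apexes are non-selected. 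The central strip $[a_{0L}^-,a_{0R}^-]$ then contributes nothing to $e_1$. What cannot be avoided are the two end strips $[a_{00},a_{0L}^-]$ and $[a_{0R}^-,a_{0n}]$: if $a_{10}$ is selected, then $a_{10}=a_{10}^+$ necessarily sees the strip $[a_{00},a_{0L}^-]$ from its left (there is no non-selected point of $A_1$ to its left to shield it), contributing $a_{0L}^- - a_{00}$; symmetrically if $a_{1m}$ is selected it contributes $a_{0n}-a_{0R}^-$. This yields exactly
$$
e_1^{min}|_{e_0^{min}} = \mathcal{X}(a_{10}^+)\cdot(a_{0L}^- - a_{00}) + \mathcal{X}(a_{1m}^+)\cdot(a_{0n} - a_{0R}^-).
$$

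Finally I would check optimality of this lower bound: that no triangulation with $e_0=0$ can push $e_1$ below the stated value. The argument is that whenever $a_{10}$ (resp.\ $a_{1m}$) is selected, \emph{every} triangulation of $C$ must contain an edge emanating from $a_{10}$ (resp.\ $a_{1m}$) into $A_0$, because $a_{10}$ is an extreme point of the Cayley point set on its side; the leftmost (resp.\ rightmost) $A_0$-point reachable by such an edge without creating a positive-volume $0$-mixed triangle is $a_{0L}^-$ (resp.\ $a_{0R}^-$), and hence the end strip is unavoidably visible from a selected $A_1$-point. The symmetric statement for $e_0^{min}|_{e_1^{min}}$ follows by interchanging the roles of $A_0$ and $A_1$. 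The main obstacle I anticipate is the bookkeeping in this last optimality step: one must argue carefully that one cannot simultaneously shield the selected $A_0$-points (to keep $e_0=0$) and also shield more than the central strip of $(a_{00},a_{0n})$ from the forced edges at $a_{10}$ and $a_{1m}$ --- i.e. that the two minimization goals genuinely interact only through which points are selected, not through a clever joint triangulation. This is handled by the visibility/lower-hull description of $e_0,e_1$ established in the preceding lemmas, which decouples the two sums.
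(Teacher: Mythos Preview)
Your overall strategy coincides with the paper's: it simply says the proof is analogous to that of Lemma~\ref{L:rational_x-max,y-min}, replacing selected points of $A_0$ by non-selected ones, and notes that under Selection2 there is always a non-selected point in $A_1$, so $1-\mathcal{X}(A_1^-)=0$ and the third term present in Lemma~\ref{L:rational_x-max,y-min} drops out. Your argument reaches the same formula by the same visibility picture; the ``central strip $[a_{0L}^-,a_{0R}^-]$ contributes nothing'' step is exactly where this non-selected $A_1$-point is used, and it would be cleaner to say so explicitly.

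One local description is wrong, though not fatal. In step~1 you write that selected points of $A_0$ can be ``shielded'' by lying strictly below an edge $(a_{00},a_{0n})$ joining two points of $A_0$. But all points of $A_0$ are collinear in the Cayley picture (they sit on the line at height~$0$), so nothing lies strictly below that edge. The correct mechanism, matching the paper's analogue, is simply to choose a non-selected point $a_{0j}^-\in A_0$ and draw edges $(a_{0j}^-,a_{10})$ and $(a_{0j}^-,a_{1m})$; then every $0$-mixed triangle has apex $a_{0j}^-$, so $e_0=0$. Once you replace the shielding description by this, your construction of the minimizing triangulation and your optimality argument go through and agree with the paper.
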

\noindent
The proof is similar to that of lemma~\ref{L:rational_x-max,y-min} except
that we concentrate on the non-selected points in $A_0$ instead of
the selected ones; note also that there is a non-selected point in $A_1$
and thus $1 - {\mathcal X}(A_1^-) = 0$.

The above lemmas lead to the following results for the four corners of
$N(\phi)$:

\begin{thm}\label{T:rational_upper-left-corner}
Suppose that $e_1^{max}|_{e_0^{min}} \ne a_{0n}-a_{00}$ and let
$
\delta= (a_{0n} - a_{0R}^-) \cdot (a_{1L}^+ - a_{10}) - (a_{0L}^- - a_{00}) \cdot (a_{1m} - a_{1R}^+).
$
Then, under Selection2,
\begin{itemize}
\item
$a_{00}$ is selected and $a_{10}$ is not, or $a_{0n}$ is selected and $a_{1m}$ is not,
which also implies that $e_0^{min}|_{e_1^{max}} \ne 0$;
\item
the upper left corner of $N(\phi)$
consists of a single edge connecting the points $(0, e_1^{max}|_{e_0^{min}})$
and $(e_0^{min}|_{e_1^{max}},$ $a_{0n}-a_{00})$  
unless $a_{00}, a_{0n}$ are selected, $a_{10}, a_{1m}$ are non-selected, and
$\delta \ne 0$, in which case the corner consists of two edges connecting
point $(0, e_1^{max}|_{e_0^{min}})$ to a point~$p$,
and point~$p$ to $(e_0^{min}|_{e_1^{max}}, a_{0n}-a_{00})$, where
$$
p = (a_{1L}^+ - a_{10}, a_{0R}^- - a_{00}) \mbox{ if } \delta < 0
\mbox{\quad and \quad} p = (a_{1m} - a_{1R}^+, a_{0n} - a_{0L}^-) \mbox{ if } \delta > 0.
$$
\end{itemize}
Point $p$ lies on the polygon edge iff $\delta=0$.
A symmetric result holds for the lower right corner.
\begin{proof}
Lemma~\ref{L:rational_y-min,x-max} implies that
$e_1^{max}|_{e_0^{min}} = a_{0n}-a_{00}$ in all cases
except if $a_{00}$ is selected and $a_{10}$ is not, or if
$a_{0n}$ is selected and $a_{1m}$ is not
(note that if $a_{00}$ is selected and $a_{10}$ is not, then
$a_{0L}^- \ne a_{00}$ and $\mathcal{X}(a_{10}^+) \cdot (a_{0L}^- - a_{00}) = 0$,
and similarly if $a_{0n}$ is selected and $a_{1m}$ is not, then
$a_{0R}^- \ne a_{0n}$ and $\mathcal{X}(a_{1m}^+) \cdot (a_{0n} - a_{0R}^-) = 0$).
In each of these cases,
lemma~\ref{L:rational_x-max,y-min} implies that
$e_0^{min}|_{e_1^{max}} \ne 0$.

Let us consider the case in which $a_{00}$ is selected, $a_{10}$ is not,
and $a_{0n}$ is not selected or $a_{1m}$ is selected or both.
Then, $e_1^{max}|_{e_0^{min}} = a_{0n}-a_{0L}^-$
and $e_0^{min}|_{e_1^{max}} = a_{1L}^+ - a_{10}$.
Suppose, for contradiction, that there exists a triangulation~$T$ corresponding
to a point $p_T = (x_T,y_T)$ with $x_T < a_{1L}^+ - a_{10}$ and $y_T > a_{0n} - a_{0L}^-$.
Consider the edges $a_{0i} a_{1j}$ of $T$; as these edges do not cross, they
can be ordered from left to right. The leftmost edge is $a_{00} a_{10}$ with
$a_{00}$ selected and $a_{10}$ not selected. Let $a_{0i} a_{1j}$ be the leftmost edge
such that either $a_{0i}$ is not selected or $a_{1j}$ is selected;
exactly one of these two conditions will hold, since $a_{0i} a_{1j}$ is
the leftmost such edge of a triangulation,
If $a_{0i}$ is not selected, then all the points $a_{10}, \ldots, a_{1j}$ are
not selected, and thus no portion of the segment $(a_{00},a_{0i})$ contributes
to the $y$-coordinate~$y_t$ of $p_T$, i.e., $y_T \le a_{0n} - a_{0i} \le a_{0n} - a_{0L}^-$,
a contradiction.
Similarly, if $a_{1j}$ is selected, then all the points $a_{00}, \ldots, a_{0i}$
are selected, and thus the entire segment $(a_{10}, a_{1j})$ contributes to
the $x$-coordinate~$x_t$, i.e., $x_T \ge a_{1j} - a_{10} \ge a_{1L}^+ - a_{10}$,
a contradiction again.
Therefore, the upper left corner in this case consists of the edge
connecting $(0, a_{0n}-a_{0L}^-)$ and $(a_{1L}^+ - a_{10}, a_{0n}-a_{00})$.
The case in which $a_{0n}$ is selected, $a_{1m}$ is not, and $a_{00}$ is not selected or
$a_{10}$ is selected or both is right-to-left symmetric yielding a similar result.

Finally, we consider the case in which $a_{00}$ and $a_{0n}$ are selected
and $a_{10}$ and $a_{1m}$ are not.
Then, $e_1^{max}|_{e_0^{min}} = a_{0R}^- - a_{0L}^-$
and $e_0^{min}|_{e_1^{max}} = a_{1L}^+ - a_{10} + a_{1m} - a_{1R}^+$ leading to points
$q_1 = (0, a_{0R}^- - a_{0L}^-)$ and $q_2 = (a_{1L}^+ - a_{10} + a_{1m} - a_{1R}^+, a_{0n} - a_{00})$.
Let us consider the points $p_1 = (a_{1L}^+ - a_{10}, a_{0R}^- - a_{00})$
and $p_2 = (a_{1m} - a_{1R}^+, a_{0n} - a_{0L}^-)$.
It is not difficult to see that one can obtain triangulations
corresponding to these points; for $p_1$, we add the edges
$a_{00} a_{1L}^+$, $a_{1L}^+ a_{0R}^-$, and $a_{0R}^- a_{1m}$, while for $p_2$
the edges $a_{0n} a_{1R}^+$, $a_{1R}^+ a_{0L}^-$, and $a_{0L}^- a_{10}$.
Moreover, the points $q_1$, $q_2$, $p_1$, and $p_2$ form a parallelogram
which degenerates to a line segment if $\delta = 0$; if $\delta \ne 0$,
then $p_1$ ($p_2$, resp.) is above the line through $q_1$ and $q_2$ if
$\delta < 0$ ($\delta > 0$, resp.).
Assume for the moment that $\delta < 0$. We will show that the edges $q_1 p_1$
is an edge of $N(\phi)$; suppose, for contradiction, that
there exists a triangulation~$T$ corresponding to a point~$p_T = (x_T,y_T)$
which has $x_T < a_{1L}^+ - a_{10}$, $y_T > a_{0R}^- - a_{0L}^-$ and lies above
the line through $q_1$ and $p_1$.
Since $a_{00}$ is selected and $a_{10}$ is not, we can consider the ordered
edges $a_{0i} a_{1j}$ of $T$ (from left to right) and we can show as above that
either the entire segment $(a_{10}, a_{1L}^+)$ contributes to the $x$-coordinate
of $p_T$ or no part of the segment $(a_{00}, a_{0L}^-)$
contributes to its $y$-coordinate; the former is in contradiction
with the fact that $x_T < a_{1L}^+ - a_{10}$, and thus the latter case holds.
Moreover, by considering the edges $a_{0i} a_{1j}$ of $T$ from right to left,
we can show that
either the entire segment $(a_{1R}^+, a_{1m})$ contributes to the $x$-coordinate
of $p_T$ or no part of the segment $(a_{0R}^-, a_{0n})$ contributes to its
$y$-coordinate; the latter case, in conjunction with the latter case of
the previous observation, is in contradiction with $y_T > a_{0R}^- - a_{0L}^-$,
and hence the former case holds. Thus, $x_T \ge a_{1m} - a_{1R}^+$ and
$y_T \le a_{0n} - a_{0L}^-$. For $p_T$ to be above the line through $q_1$ and $p_1$,
it should hold that
$\frac{y_T - (a_{0R}^- - a_{0L}^-)}{x_T} > \frac{a_{0L}^- - a_{00}}{a_{1L}^+ - a_{10}}$;
this is not possible because $\delta < 0 \Longrightarrow
\frac{a_{0n} - a_{0R}^-}{a_{1m} - a_{1r}^+} < \frac{a_{0L}^- - a_{00}}{a_{1L}^+ - a_{10}}$ and
$\frac{y_T - (a_{0R}^- - a_{0L}^-)}{x_T} \le \frac{a_{0n} - a_{0R}^-}{a_{1m} - a_{1r}^+}$.
Therefore, the segment $q_1 p_1$ is an edge of $N(\phi)$.
For $\delta < 0$, we can show in a similar fashion that the segment
$q_2 p_1$ is also an edge of $N(\phi)$.
The cases for $\delta > 0$ are symmetric involving point~$p_2$.
\end{proof}
\end{thm}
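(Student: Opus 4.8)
The plan is to reduce the description of the upper left corner of $N(\phi)$ to the two boundary values $e_1^{max}|_{e_0^{min}}$ and $e_0^{min}|_{e_1^{max}}$ computed in Lemmas~\ref{L:rational_y-min,x-max} and~\ref{L:rational_x-max,y-min}, and then to a careful case analysis on which of $a_{00},a_{0n}$ are selected and which of $a_{10},a_{1m}$ are selected. The starting observation is purely arithmetic: from the formula for $e_1^{max}|_{e_0^{min}}$, the term $\mathcal{X}(a_{10}^+)\cdot(a_{0L}^- - a_{00})$ is nonzero only when $a_{10}$ is selected, in which case $a_{00}$ is \emph{not} selected (since a selected $a_{00}$ would force $a_{0L}^-\ne a_{00}$... — more precisely, if $a_{00}$ is selected then under Selection2 the leftmost non-selected point $a_{0L}^-$ is strictly to the right of $a_{00}$, and symmetrically for the right end). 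Spelling this out shows $e_1^{max}|_{e_0^{min}}=a_{0n}-a_{00}$ in every case except the two listed, and in those two cases Lemma~\ref{L:rational_x-max,y-min} gives $e_0^{min}|_{e_1^{max}}\ne 0$; this establishes the first bullet.

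Next I would split into three cases. \textbf{Case A:} $a_{00}$ selected, $a_{10}$ not, and ($a_{0n}$ not selected or $a_{1m}$ selected). Here $e_1^{max}|_{e_0^{min}}=a_{0n}-a_{0L}^-$ and $e_0^{min}|_{e_1^{max}}=a_{1L}^+-a_{10}$, so there is a unique candidate edge joining $(0,a_{0n}-a_{0L}^-)$ to $(a_{1L}^+-a_{10},a_{0n}-a_{00})$. To show this is genuinely an edge of $N(\phi)$ I argue by contradiction: any triangulation $T$ realizes a point $p_T=(x_T,y_T)$, and the edges $a_{0i}a_{1j}$ of $T$ are pairwise non-crossing, hence linearly orderable from left to right starting at $a_{00}a_{10}$. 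Walking rightward to the first edge $a_{0i}a_{1j}$ where either $a_{0i}$ is non-selected or $a_{1j}$ is selected (exactly one holds), one concludes in the first subcase that $y_T\le a_{0n}-a_{0i}\le a_{0n}-a_{0L}^-$, and in the second subcase that $x_T\ge a_{1j}-a_{10}\ge a_{1L}^+-a_{10}$. Either way no point strictly above and to the left of the candidate edge can occur, so the edge is supporting. \textbf{Case B:} the right-to-left mirror of Case A, giving a symmetric single edge.

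\textbf{Case C:} $a_{00},a_{0n}$ selected and $a_{10},a_{1m}$ not. Now $e_1^{max}|_{e_0^{min}}=a_{0R}^- - a_{0L}^-$ and $e_0^{min}|_{e_1^{max}}=(a_{1L}^+-a_{10})+(a_{1m}-a_{1R}^+)$, giving corner endpoints $q_1,q_2$; the two candidate intermediate points $p_1,p_2$ are realized by explicit triangulations (for $p_1$: insert edges $a_{00}a_{1L}^+$, $a_{1L}^+a_{0R}^-$, $a_{0R}^-a_{1m}$, analogously for $p_2$). A short computation shows $q_1,p_1,q_2,p_2$ form a parallelogram degenerating to the segment $q_1q_2$ precisely when $\delta=0$, with $p_1$ (resp.\ $p_2$) strictly above the line $q_1q_2$ when $\delta<0$ (resp.\ $\delta>0$). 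For $\delta<0$ I would prove $q_1p_1$ and $p_1q_2$ are edges of $N(\phi)$ by the same non-crossing-edge ordering argument run from both ends: scanning left-to-right forces either the whole segment $(a_{10},a_{1L}^+)$ to contribute to $x_T$ or none of $(a_{00},a_{0L}^-)$ to contribute to $y_T$; scanning right-to-left forces either all of $(a_{1R}^+,a_{1m})$ into $x_T$ or none of $(a_{0R}^-,a_{0n})$ into $y_T$; combining the surviving alternatives against the assumed position of a would-be violating $p_T$ yields a contradiction via the inequality $\frac{a_{0n}-a_{0R}^-}{a_{1m}-a_{1R}^+}<\frac{a_{0L}^- - a_{00}}{a_{1L}^+-a_{10}}$, which is exactly $\delta<0$. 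The $\delta>0$ case is the mirror image with $p_2$, and $\delta=0$ makes $p_1,p_2$ lie on the segment, recovering the single-edge description. The symmetric statement for the lower right corner follows by exchanging the roles of numerators/denominators, Selection1/Selection2, and the two coordinate axes.

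\textbf{Main obstacle.} The delicate part is the non-crossing-edge scan in Cases~A and~C: one must argue that in \emph{any} triangulation, the contributions to $x_T$ and $y_T$ are constrained by the first "type-changing" edge, which requires knowing that to the left of that edge all $A_0$-points are selected and all $A_1$-points are non-selected (and the mirror on the right). Making this monotonicity fully rigorous — in particular handling triangulations where several edges share an endpoint, and verifying that "contributes to the coordinate" is correctly read off from~(\ref{EQxy}) — is where the real work lies; everything else is the parallelogram bookkeeping and the sign of $\delta$.
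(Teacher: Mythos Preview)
Your proposal is correct and follows essentially the same route as the paper's proof: the same derivation of the first bullet from Lemmas~\ref{L:rational_y-min,x-max} and~\ref{L:rational_x-max,y-min}, the same three-way case split (your Cases~A, B, C), the same left-to-right scan of the non-crossing edges $a_{0i}a_{1j}$ to locate the first ``type-changing'' edge, and the same parallelogram computation with $q_1,q_2,p_1,p_2$ governed by the sign of~$\delta$. The only wrinkle is the slightly tangled sentence in your first paragraph about when $\mathcal{X}(a_{10}^+)\cdot(a_{0L}^- - a_{00})$ vanishes --- but you immediately self-correct with the ``more precisely'' clause, and the conclusion you draw is the right one.
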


In a similar fashion, we can show the following theorems:

\begin{thm}\label{T:rational_lower-left-corner}
Suppose that $e_1^{min}|_{e_0^{min}} \ne 0$ and let
$
\delta= (a_{0n} - a_{0R}^-) \cdot (a_{1L}^- - a_{10}) - (a_{0L}^- - a_{00}) \cdot (a_{1m} - a_{1R}^-).
$
Then, under Selection2,
\begin{itemize}
\item
$a_{00}, a_{10}$ are selected, or $a_{0n}, a_{1m}$ are selected,
which also implies that $e_0^{min}|_{e_1^{min}} \ne 0$;
\item
the lower left corner of $N(\phi)$
consists of a single edge connecting the points $(0, e_1^{min}|_{e_0^{min}})$
and $(e_0^{min}|_{e_1^{min}}, 0)$
unless all four points $a_{00}, a_{10}, a_{0n}, a_{1m}$ are selected and $\delta \ne 0$
in which case the corner consists of two edges
connecting $(0, e_1^{min}|_{e_0^{min}})$ to point~$p$, and $p$ to
$(e_0^{min}|_{e_1^{min}}, 0)$, where
$$
p = (a_{1L}^- - a_{10}, a_{0n} - a_{0R}^-) \mbox{ if } \delta < 0
\mbox{\quad and \quad} p = (a_{1m} - a_{1R}^-, a_{0L}^- - a_{00}) \mbox{ if } \delta > 0.
$$
\end{itemize}
\end{thm}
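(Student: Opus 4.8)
The plan is to follow the proof of Theorem~\ref{T:rational_upper-left-corner}, with its inputs replaced by the analogues for the lower-left corner. Note first that, unlike the upper-left corner, the lower-left corner lies entirely on the lower hull of the Selection2 points, so only Selection2 triangulations are relevant. I would read off the two axis points from Lemma~\ref{L:rational_x-min,y-min} and its symmetric form as $q_1=(0,\,e_1^{min}|_{e_0^{min}})$ and $q_2=(e_0^{min}|_{e_1^{min}},\,0)$, with $e_1^{min}|_{e_0^{min}}=\mathcal{X}(a_{10}^+)(a_{0L}^--a_{00})+\mathcal{X}(a_{1m}^+)(a_{0n}-a_{0R}^-)$. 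Because $a_{0L}^-\neq a_{00}$ exactly when $a_{00}$ is selected, and $a_{0R}^-\neq a_{0n}$ exactly when $a_{0n}$ is selected, the first summand is nonzero iff $a_{00}$ and $a_{10}$ are both selected, and the second iff $a_{0n}$ and $a_{1m}$ are both selected; the symmetric formula for $e_0^{min}|_{e_1^{min}}$ yields the same characterization, which proves the first bullet. Under the standing hypothesis $e_1^{min}|_{e_0^{min}}\neq0$ the only possibilities are then (a)~exactly one of the pairs $\{a_{00},a_{10}\}$, $\{a_{0n},a_{1m}\}$ is fully selected, and (b)~all four of $a_{00},a_{10},a_{0n},a_{1m}$ are selected.

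For case~(a), by the left--right symmetry of the configuration I may assume $a_{00},a_{10}$ are selected and $\{a_{0n},a_{1m}\}$ is not; then $q_1=(0,a_{0L}^--a_{00})$ and $q_2=(a_{1L}^--a_{10},0)$, and the goal is that the segment $q_1q_2$ is an edge of $N(\phi)$. As in Theorem~\ref{T:rational_upper-left-corner} I argue by contradiction: a Selection2 triangulation $T$ of the Cayley point set (Proposition~\ref{P:Cayley_trick}) giving $(x_T,y_T)$ strictly on the origin side of the line $q_1q_2$ satisfies $x_T<a_{1L}^--a_{10}$ and $y_T<a_{0L}^--a_{00}$. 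Ordering the $A_0A_1$-bridges of $T$ from left to right, the $A_0$- and $A_1$-endpoints each sweep monotonically through their point sets; the inequality $x_T<a_{1L}^--a_{10}$ forces a sub-sweep of the $A_1$-endpoint inside $[a_{10},a_{1L}^-]$ to occur while the $A_0$-endpoint already has value $\geq a_{0L}^-$. Hence the $A_0$-endpoint completed all of its advances from $a_{00}$ up to $a_{0L}^-$ before the $A_1$-endpoint left $[a_{10},a_{1L}^-)$, every point of which is selected because $a_{1L}^-$ is the leftmost non-selected point of $A_1$; so those advances, of total length $a_{0L}^--a_{00}$, all count toward $e_1$, giving $y_T\geq a_{0L}^--a_{00}$ --- a contradiction. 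Therefore the corner is the single edge $q_1q_2$.

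For case~(b), all four corner points are selected, so $q_1=(0,b_L+b_R)$ and $q_2=(a_L+a_R,0)$ where $a_L=a_{1L}^--a_{10}$, $a_R=a_{1m}-a_{1R}^-$, $b_L=a_{0L}^--a_{00}$, $b_R=a_{0n}-a_{0R}^-$ are all positive. I would first exhibit triangulations attaining $p_1=(a_L,b_R)$ and $p_2=(a_R,b_L)$: for $p_1$, sweep the $A_1$-endpoint to $a_{1L}^-$ with the $A_0$-endpoint pinned at $a_{00}$, then the $A_0$-endpoint to $a_{0R}^-$ with the $A_1$-endpoint pinned at $a_{1L}^-$, then the $A_1$-endpoint to $a_{1m}$, then the $A_0$-endpoint to $a_{0n}$; the triangulation for $p_2$ is the right-to-left mirror. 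A direct computation shows $q_1,p_1,q_2,p_2$ are the vertices of a parallelogram with diagonal $q_1q_2$, which collapses onto $q_1q_2$ exactly when $\delta=0$, and in which $p_1$ (resp.\ $p_2$) lies strictly on the origin side of $q_1q_2$ exactly when $\delta<0$ (resp.\ $\delta>0$). When $\delta<0$ it remains to show $q_1p_1$ and $p_1q_2$ are edges of $N(\phi)$; this is the verbatim analogue of the final part of the proof of Theorem~\ref{T:rational_upper-left-corner}: a point $(x_T,y_T)$ strictly on the origin side of $q_1p_1$ is ruled out by combining the left-to-right dichotomy (either all of $[a_{10},a_{1L}^-]$ counts toward $e_0$, or no part of $[a_{00},a_{0L}^-]$ counts toward $e_1$) with its right-to-left mirror and then invoking $\delta<0$; the edge $p_1q_2$ is symmetric, the case $\delta>0$ uses $p_2$, and $\delta=0$ reduces to case~(a). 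The lower right corner is obtained by the $x\leftrightarrow y$ reflection.

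The step I expect to be the crux is the bridge-sweep accounting: proving that a shortfall in the $e_0$-value on the block $[a_{10},a_{1L}^-]$ is compensated, length for length, by an increase of $e_1$, and --- in case~(b) --- that the shortfalls at the left and right ends of $[a_{10},a_{1m}]$ trade off precisely through the bilinear form $\delta$ of the statement. The remaining ingredients (the axis-point formulas, the two explicit triangulations for $p_1$ and $p_2$, and the planar parallelogram check) are routine once the Cayley correspondence of Proposition~\ref{P:Cayley_trick} and Lemmas~\ref{L:rational_y-min,x-max} and~\ref{L:rational_x-min,y-min} are in hand.
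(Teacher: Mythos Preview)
Your approach is exactly what the paper intends: it gives no separate proof of this theorem, only the remark that it follows ``in a similar fashion'' from the proof of Theorem~\ref{T:rational_upper-left-corner}, and your adaptation is the natural one.

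One slip to correct in case~(b): the parenthetical left-to-right dichotomy should read ``either all of $[a_{10},a_{1L}^-]$ counts toward $e_0$, or \emph{all} of $[a_{00},a_{0L}^-]$ counts toward $e_1$,'' not ``no part.'' In the lower-left situation both $a_{00}$ and $a_{10}$ are selected, so---exactly as you argued correctly in case~(a)---if the $A_0$-endpoint reaches a non-selected point before the $A_1$-endpoint leaves $[a_{10},a_{1L}^-)$, the intervening $A_0$-advances occur while the $A_1$-endpoint is at a \emph{selected} point and therefore \emph{do} contribute to $e_1$. (You seem to have transcribed the dichotomy verbatim from Theorem~\ref{T:rational_upper-left-corner}, where $a_{10}$ is non-selected and the conclusion is indeed ``no part.'') With the dichotomy corrected, combining it with its right-to-left mirror yields four cases, each giving $b_L x_T + a_L y_T \ge a_L(b_L+b_R)$ (using $\delta<0$ in two of them), which rules out any point strictly on the origin side of $q_1p_1$; the edge $p_1q_2$ and the case $\delta>0$ are then symmetric.
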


\begin{thm}\label{T:rational_upper-right-corner}
Suppose that $e_1^{max}|_{e_0^{max}} \ne a_{0n} - a_{00}$ and let
$
\delta= (a_{0n} - a_{0R}^+) \cdot (a_{1L}^+ - a_{10}) - (a_{0L}^+ - a_{00}) \cdot (a_{1m} - a_{1R}^+).
$
Then, under Selection1,
\begin{itemize}
\item
none of $a_{00}, a_{10}$ is selected or none of $a_{0n}, a_{1m}$ is selected,
which also implies that $e_0^{max}|_{e_1^{max}} \ne a_{1m} - a_{10}$;
\item
the upper right corner of $N(\phi)$ consists of a single edge connecting
$(a_{1m}- a_{10}, e_1^{max}|_{e_0^{max}})$ and $(e_0^{max}|_{e_1^{max}}, a_{0n} - a_{00})$,
unless none of the $a_{00}, a_{10}, a_{0n}, a_{1m}$ is selected
and $\delta \ne 0$, in which case the corner consists of~2 edges
connecting $(a_{1m} - a_{10}, e_1^{max}|_{e_0^{max}})$ to~$p$, and $p$ to
$(e_0^{max}|_{e_1^{max}}, a_{0n} - a_{00})$, where
$$
p = (a_{1m} - a_{1L}^+, a_{0R}^+ - a_{00}) \mbox{ if } \delta < 0
\mbox{\quad and \quad} p = (a_{1R}^+ - a_{10}, a_{0n} - a_{0L}^+) \mbox{ if } \delta > 0.
$$
\end{itemize}
\end{thm}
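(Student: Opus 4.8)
The plan is to mirror the proof of Theorem~\ref{T:rational_upper-left-corner}, exchanging Selection2 for Selection1, the lower hull (w.r.t.\ $(1,1)$) for the upper hull, and the roles of selected and non-selected end points; the displayed identities of Lemma~\ref{L:rational_x-max,y-max} and its symmetric companion replace those of Lemmas~\ref{L:rational_y-min,x-max} and~\ref{L:rational_x-min,y-min}. For the first bullet I would plug the formula $e_1^{max}|_{e_0^{max}}=(a_{0R}^+-a_{0L}^+)+\mathcal{X}(a_{10}^+)(a_{0L}^+-a_{00})+\mathcal{X}(a_{1m}^+)(a_{0n}-a_{0R}^+)$ of Lemma~\ref{L:rational_x-max,y-max} into a short case check: if at least one of $a_{00},a_{10}$ is selected then $\mathcal{X}(a_{10}^+)(a_{0L}^+-a_{00})=a_{0L}^+-a_{00}$ (either directly, or because $a_{0L}^+=a_{00}$), and symmetrically at the right end, so the three terms telescope to $a_{0n}-a_{00}$; the contrapositive is the asserted dichotomy, and feeding the same dichotomy into the symmetric version of Lemma~\ref{L:rational_x-max,y-max} yields $e_0^{max}|_{e_1^{max}}\ne a_{1m}-a_{10}$.

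The combinatorial engine is the lattice-path description of a triangulation $T$ of $C$ from Section~\ref{S:Rat_dif_denom}: the ``Cayley edges'' $a_{0i}a_{1j}$ of $T$, read from left to right, start at $a_{00}a_{10}$ and end at $a_{0n}a_{1m}$, and between two consecutive ones sits a fan of mixed triangles whose apex is a single vertex of $A_0$ (resp.\ $A_1$); that fan contributes the length of the $A_1$-edge (resp.\ $A_0$-edge) it spans to $e_0$ (resp.\ $e_1$) precisely when its apex is selected, and contributes~$0$ otherwise. In the single-edge subcase I would reduce, using the first bullet, to the case $a_{00},a_{10}$ both non-selected (the case $a_{0n},a_{1m}$ both non-selected is the right-to-left mirror); then Lemma~\ref{L:rational_x-max,y-max} and its companion give $e_1^{max}|_{e_0^{max}}=a_{0n}-a_{0L}^+$ and $e_0^{max}|_{e_1^{max}}=a_{1m}-a_{1L}^+$, so the two extreme vertices are $V_1=(a_{1m}-a_{10},\,a_{0n}-a_{0L}^+)$ and $V_2=(a_{1m}-a_{1L}^+,\,a_{0n}-a_{00})$. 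Suppose some $T$ produces a point $p_T=(x_T,y_T)$ strictly on the $(1,1)$-side of the segment $V_1V_2$; since always $x_T\le a_{1m}-a_{10}$ and $y_T\le a_{0n}-a_{00}$, this forces $x_T>a_{1m}-a_{1L}^+$ and $y_T>a_{0n}-a_{0L}^+$. Now let $a_{0i}a_{1j}$ be the leftmost Cayley edge of $T$ with a selected endpoint; every earlier edge has both endpoints non-selected, so the step reaching $a_{0i}a_{1j}$ is an $A_0$-increment if $a_{0i}$ is the selected endpoint and an $A_1$-increment if $a_{1j}$ is (in particular exactly one of the two is selected). In the first case no fan at or to the left of $a_{0i}a_{1j}$ that spans an $A_0$-edge inside $[a_{00},a_{0i}]$ has a selected apex, so $[a_{00},a_{0i}]$ contributes $0$ to $e_1$ and $y_T\le a_{0n}-a_{0i}\le a_{0n}-a_{0L}^+$; in the second case $[a_{10},a_{1j}]$ contributes $0$ to $e_0$ and $x_T\le a_{1m}-a_{1j}\le a_{1m}-a_{1L}^+$. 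Either conclusion contradicts $p_T$ lying beyond $V_1V_2$, so the segment is an edge of $N(\phi)$; the subcase in which none of the four end points is selected and $\delta=0$ is the degenerate instance of the two-edge analysis below.

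For the two-edge subcase (none of $a_{00},a_{10},a_{0n},a_{1m}$ selected, $\delta\ne0$) the extreme vertices become $q_1=(a_{1m}-a_{10},\,a_{0R}^+-a_{0L}^+)$ and $q_2=(a_{1R}^+-a_{1L}^+,\,a_{0n}-a_{00})$. I would first realize the candidate bend points by explicit triangulations: $p_1=(a_{1m}-a_{1L}^+,\,a_{0R}^+-a_{00})$ from the triangulation containing $a_{00}a_{1L}^+,\ a_{1L}^+a_{0R}^+,\ a_{0R}^+a_{1m}$, and $p_2=(a_{1R}^+-a_{10},\,a_{0n}-a_{0L}^+)$ from the one containing $a_{10}a_{0L}^+,\ a_{0L}^+a_{1R}^+,\ a_{1R}^+a_{0n}$ (and $q_1,q_2$ by the constructions behind Lemma~\ref{L:rational_x-max,y-max} and its companion). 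A direct computation gives $q_1+q_2=p_1+p_2$, so $q_1,p_1,q_2,p_2$ are the vertices of a parallelogram, and the cross product of $q_2-q_1$ with $p_1-q_1$ equals $-\delta$ up to a fixed positive factor; hence the parallelogram degenerates exactly when $\delta=0$, and for $\delta\ne0$ the bend point lying on the $(1,1)$-side is $p_1$ if $\delta<0$ and $p_2$ if $\delta>0$, as claimed. That the two segments $q_1p$ and $pq_2$ are genuine edges of $N(\phi)$ is shown by the same Cayley-edge argument, but now run from both ends: reading $T$ from the left shows that either an entire $A_1$-prefix contributes to $e_0$ (contradicting the $x$-bound forced on $p_T$) or no $A_0$-prefix contributes to $e_1$, and reading from the right (legitimate because $a_{0n},a_{1m}$ are non-selected too) gives the complementary alternative; combining the surviving alternatives with the position of $p_T$ relative to the line through $q_1$ and $p$ produces a slope inequality incompatible with the sign of $\delta$, exactly as in the displayed determinant computations in the proof of Theorem~\ref{T:rational_upper-left-corner}.

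I expect the only real difficulty to be bookkeeping: making ``the leftmost Cayley edge with a selected endpoint'' well defined, checking that the reached endpoint is the unique selected one, tracking precisely which sub-segments of $A_0$ and $A_1$ do and do not contribute, and carrying the resulting case split (which of $a_{00},a_{10},a_{0n},a_{1m}$ are selected, and the sign of $\delta$) to the end. No new idea beyond the proof of Theorem~\ref{T:rational_upper-left-corner} is needed; the whole argument is its mirror image, with Selection1 and the upper hull in place of Selection2 and the lower hull.
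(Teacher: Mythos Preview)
Your proposal is correct and follows exactly the route the paper intends: the paper does not give a separate proof of Theorem~\ref{T:rational_upper-right-corner} but simply states ``In a similar fashion, we can show the following theorems,'' referring back to the proof of Theorem~\ref{T:rational_upper-left-corner}. Your mirroring of that proof---swapping Selection2 for Selection1, the lower hull for the upper hull, and tracking the leftmost Cayley edge with a \emph{selected} (rather than non-selected) endpoint---is precisely the intended adaptation, and your bookkeeping (the telescoping case-check for the first bullet, the parallelogram identity $q_1+q_2=p_1+p_2$, and the two-sided scan in the $\delta\ne0$ subcase) matches the structure of the paper's argument point for point.
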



\begin{example}\label{Exam:laurent}
$$
x=\frac{ a+t^2 }{ct}, \; y=\frac{ b }{ dt }, \quad a,b,c,d\ne 0 .
$$
With generic coefficients, the denominators are different.
The input supports are $A_0=\{0,1^+,2\}, A_1=\{0,1^+\}$, where
we have indicated the selected points.
In this example, both selection criteria lead to the same (singleton) selected subsets.
The polygon obtained by our method has vertices $\{(0,0),(1,1),(0,2)\}$,
which is optimal since
$$
\phi=a d^2 y^2-b c d xy+b^2 .
$$
\end{example}

\begin{example}\label{EXdiff_denom}
Parameterization
$$
x=\frac{ t^7+t^4+t^3+t^2}{ t^3+1 }, ~ y= \frac{ t^5+t^4+t}{ t^5+t^2+1 }
$$
yields implicit polygon with vertices
$(0,2), (0,7), (1,0), (5,0), (5,7)$
which are the vertices computed by our method.
The supports of $f_0, f_1$ are
$A_0=\{0^+,2^-,3^+,4^-, 7^-\},~ A_1=\{0^+,1^-, 2^+,4^-,5^+\}$
where the notation is under Selection1.
Selection2 gives $A_0=\{0^+,2^-,3^-,4^-, 7^-\},~ A_1=\{0^+,1^-, 2^+,4^-,5^-\}$.
\end{example}

\begin{example}\label{Exam:circle_diff}
For the unit circle, $ x=2t/(t^2+1), ~y=(1-t^2)/(t^2+1)$,
the supports are
$A_0=\{0^+,1^-,2^+\}, ~A_1=\{0^+,2^+\}$, under the first selection and
$A_0=\{0^+,1^-,2^+\}, ~A_1=\{0^-,2^-\}$, under the second selection.
The set $C=\kappa(A_0,A_1)$ has~5 triangulations shown in figure~\ref{Fig:example_circle}
which, after applying prop.~\ref{P:Sturmf_extreme}, give the terms
$y^2-1, ~ x^2y^2-2x^2y+x^2$ and $x^2y^2+2x^2y+x^2$.
This method yields vertices $(2,2), (2,0), (0,2), (0,0)$.
By degree bounds, we end up with vertices $(2,0), (0,2), (0,0)$,
Interestingly, to see the cancellation of term $x^2y^2$ it does not suffice to
consider only terms coming from extremal monomials in the resultant.

\begin{figure}[ht]
\psfrag{a0}{\footnotesize $a_{00}^+$} \psfrag{a1}{\footnotesize $a_{01}^-$}
\psfrag{a2}{\footnotesize $a_{02}^+$}
\psfrag{b0}{\footnotesize $a_{10}^+$} \psfrag{b1}{\footnotesize $a_{11}^+$}
\psfrag{1}{\scriptsize $(y-1)(y+1)$} \psfrag{2}{\scriptsize $(y-1)^2x^2$}
\psfrag{3}{\scriptsize $x^2(y+1)^2$} \psfrag{4}{\scriptsize $(y-1)(y-1)x^2$}
\psfrag{5}{\scriptsize $x^2(y+1)^2$}
\centering \includegraphics[width=0.8\textwidth]{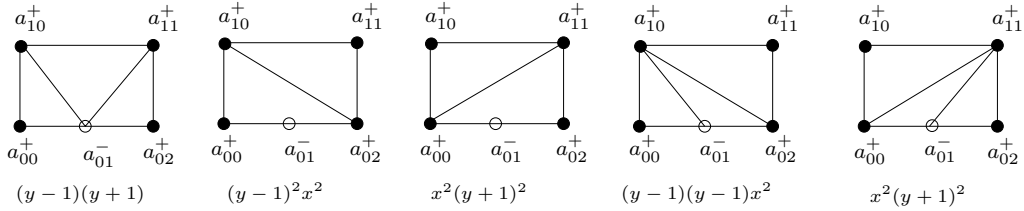}
\caption{The triangulations of $C$ in example~\ref{Exam:circle},
and the corresponding terms (under the first selection).}
\label{Fig:example_circle}
\end{figure}
See example~~\ref{Exam:circle} for a treatment taking into account the
identical denominators.
\end{example}

\begin{example}\label{EX:DAndrea}
Consider the parameterization
$$
x=\frac{ t^3+2t^2+t }{ t^2+3t-2 }, ~ y= \frac{ t^3-t^2 }{ t-2 }.
$$
The supports of $f_0, f_1$ are
$A_0=\{0^-,1^+,2^+,3^-\},~ A_1=\{0^+,1^+,2^-,3^-\}$
where the notation is under Selection1.
Selection2 gives
$A_0=\{0^+,1^-,2^-,3^-\},~ A_1=\{0^+,1^+,2^-,3^-\}$.
Our method yields the implicit support
$\{(0,1), (0,3), (3,0), (1,3), (2,0), (3,2)\}$
which defines the actual implicit polygon.
In figure~\ref{Fig:Nf_exam_intro} is shown the implicit polygon.
\end{example}


\section{Further work}\label{Sfurther}

In conclusion, we have shown that the case of common denominators reduces to a
particular system of~3 bivariate polynomials, where only {\em linear} liftings matter.
An interesting open question is to examine to which systems this observation holds,
since it simplifies the enumeration of mixed subdivisions and, hence, of the
extreme resultant monomials.
In particular, we may ask whether this holds whenever the Newton polytopes are pyramids,
or for systems with separated variables.


It is possible to use our results in deciding which polygons can appear as Newton
polygons of plane curves, and
which parameterization is possible in the generic case.
In particular, theorem~\ref{T:Nfpolynomial} and cor.~\ref{Ccoef}
imply that the Newton polygon of polynomial curves always has one vertex on each axis.
These vertices define the edge that equals
the polygon's upper hull in direction $(1,1)$.
The rest of the edges form the lower hull.
If the implicit polygon is a segment, then the parametric polynomials must be monomials.
Moreover, the implicit polygon cannot contain interior points,
provided the degree of the parameterization is~1 (cf.\ sec.~\ref{Spreliminaries}).
%
Similar results hold for curves parameterized by Laurent polynomials.

By approximating the given polygon by one of the polygons described above,
one might formulate a question of approximate parameterization.

\bigskip

{\small\noindent {\bf Acknowledgement:}
We thank Carlos D'Andrea and Martin Sombra for offering example~\ref{EX:DAndrea}.
The first author acknowledges discussions with Josephine Yu;
part of this work was done while he was visiting the Institute of Mathematics and its
Applications (IMA) at Minneapolis for the Thematic Year on Applications of
Algebraic Geometry and the Workshop on Nonlinear Computational Geometry, in 2007.

The authors are supported through PENED 2003 program, contract nr.\ 70/03/8473.
The program is co-funded by the EU -- European Social Fund (75\% of public funding),
national resources -- General Secretariat of Research and Technology of Greece
(25\% of public funding) as well as the private sector, in the framework of
Measure~8.3 of the Community Support Framework.

}

\bibliographystyle{alpha}
\bibliography{CompNewtPoly}

\end{document}